\def\la{\lambda}
\def\si{\sigma}
\def\de{\delta}
\newcommand{\Int}{\hbox{\rm int}}
\newcommand{\R}{\mathbb R}
\newcommand{\G}{\mathcal G}
\newcommand{\A}{\mathcal A}
\newcommand{\M}{\mathcal M}
\newcommand{\PP}{\mathfrak P}
\newcommand{\N}{\mathbb N}
\newcommand{\MM}{\mathfrak{M}}
\newcommand{\eps}{\varepsilon}
\def\bald{\begin{aligned}}\def\eald{\end{aligned}}
\DeclareMathOperator{\spt}{supp}
\newtheorem{Theorem}{Theorem}[section]
\newtheorem{Proposition}[Theorem]{Proposition}
\newtheorem{Lemma}[Theorem]{Lemma}
\newtheorem{Definition}[Theorem]{Definition}
\newtheorem{Corollary}[Theorem]{Corollary}
\newtheorem{Remark}[Theorem]{Remark}
\newtheorem{example}[Theorem]{Example}
 \title
 {The vanishing discount problem for Hamilton--Jacobi equations in the Euclidean space}
\thanks{   {\bf Key words.} vanishing discount problem,  Hamilton--Jacobi equations, ergodic problem.
\\{\bf AMS subject classifications.} 35B40, 35F21, 37J99, 49L25\\
The work of HI was partially supported by the JSPS grants: KAKENHI
\#16H03948, \#18H00833 and the NSF Grant No. 1440140
and the work of AS was partially supported by Fondo Ateneo 2017--Universit\`{a} di Roma "La Sapienza" and the NSF Grant No. 1440140}
\author[Ishii]{Hitoshi Ishii$^*$ }
 \address{Institute for Mathematics and Computer Science\\ Tsuda  University\\170
 2-1-1 Tsuda, Kodaira\\ Tokyo 187-8577\\Japan}
\email{}\email{hitoshi.ishii@waseda.jp}
\thanks{${}^*$ Corresponding author}
 \author[Siconolfi]{Antonio Siconolfi}
\address{Department of Mathematics \\
                 Universit\`a degli Studi di Roma ``La Sapienza''\\
         Piazzale Aldo Moro 5  \\ 00185 Roma\\
                   Italy.}
\email{siconolf@mat.uniroma1.it}
\begin{document}

 \begin{abstract} We study the asymptotic behavior of the solutions to a family of
discounted Hamilton--Jacobi equations,  posed in $\R^N$, when the discount factor goes to zero.
The ambient space being noncompact,   we introduce an assumption  implying that the Aubry set is compact and there is no degeneracy at infinity.
Our approach is to deal not with  a single Hamiltonian and Lagrangian but  with  the whole space of generalized Lagrangians,
and then  to define via duality  minimizing measures associated to both the corresponding ergodic and discounted equations.
The asymptotic result follows from  convergence properties of these measures with respect to the narrow topology.
We use as duality tool  a separation theorem in locally convex Hausdorff  spaces,
we use  the strict topology in the space of the bounded generalized Lagrangians  as well.
\end{abstract}

%\openup 2pt

 \maketitle
\tableofcontents

\section{Introduction}

We study  the asymptotic behavior, as the discount factor $\la > 0$ goes to $0$, of the   viscosity solutions  to the Hamilton--Jacobi
equations
\[\lambda \, u + H(x,Du) =c  \]
posed in the Euclidean space $\R^N$. Here  $c$  is the so--called critical value defined as
 \[ c= \inf \{ a \mid H=a \;\hbox{admits  subsolutions in $\R^N$}\}.\]
Under our assumption this quantity is actually finite  and is a minimum.

Our output provides  an extension   to the noncompact setting of the selection principle, first established  in the compact case  in \cite{DFIZ2016}.
It asserts   that the  whole  family of solutions of the discounted problems, which are uniquely solved if the ambient space is compact,
 converges to
  a  distinguished solution   of the  ergodic  limit equation
 \[ H(x,Du)=c.\]
The latter  has instead multiple solutions,   parametrized
by the Aubry set, denoted by $\mathcal A$,  which is, roughly speaking,  the set of points
where is concentrated the  obstruction of getting subsolutions to $H=a$, for $a < c$, see Appendix \ref{KAM}.

We assume the  Hamiltonian $H(x,p)$ from $\R^N \times \R^N$ to $\R$  to be  continuous in both arguments, and convex, coercive
in the momentum variable, locally uniformly in $x$. Since  $H$ can be modified for $p$ of large norm,
 without affecting the analysis, a  superlinear growth as $|p| \to + \infty$,
can be in addition postulated  without loss of generality, see Proposition \ref{square}.  A
 Lagrangian  denoted by $L$, can  be then defined via Fenchel transform.

We have one more key condition, see \eqref{A3}/\eqref{A3'}, to specifically deal with  the  lack of compactness
 of the ambient space.  It implies that the Aubry set is nonempty, compact and that the intrinsic distance associated to $H=c$,  see Appendix \ref{KAM},
  is equivalent to the Euclidean one at infinity. Loosely speaking, the latter condition means that there is no Aubry set at infinity.
  In the case where the Hamiltonian is of the form
\[ H(x,p) = |p| - f(x) \qquad\hbox{ with $f$ continuous potential}\]
this corresponds requiring  the infimum  of $f$ to be  not attained at infinity.

Under our assumption, due to the noncompactness, either of the discounted
equations does not anymore single out a unique solution, see example in Section \ref{DDPP},  and the Aubry set fails to be a uniqueness set for the critical equation.

This fact leads to single out  a special  type of solutions to the critical equation in $\R^N$, named weak KAM solutions,
 and defined as the  functions $u$ for which
\[ u(x)= \min \{u(y) + S_0(y,x) \mid y \in \mathcal A \} \qquad\hbox{for any $x$},\]
where $S_0$ is the intrinsic (semi)distance associated to the critical equation. In our setting they
are characterized among all the critical solutions, see Theorem \ref{superKAM},
by the property of being bounded from below.  By definition  $\mathcal A$ is
then  an  uniqueness set for the weak KAM solutions.

Regarding the discounted equations,  we consider the maximal solution obtained as   the
 pointwise supremum of  the family of all subsolutions. they possess, like the weak KAM solutions, the  crucial property
  of being bounded from below, see Section \ref{DDPP}.

Our main result can therefore be stated as follows:

\smallskip

{ \bf Theorem} { \em The whole family of maximal solutions to  the discounted equations  converges locally uniformly
to a distinguished  weak KAM  solution of  the limit ergodic equation.}

\smallskip

As in \cite{DFIZ2016},  we  derive the asymptotic  behavior of solutions from  weak convergence of suitable  associated   measures.
Our method however  is  rather different. The relevant measures are not defined as occupational measures on curves, and  we seldom employ
representation formulae  for solutions  or  properties of curves  in the space of state variable.

 Our approach  instead relies
on some  functional analysis and  appropriate  duality principles between spaces of generalized Lagrangians and spaces of measures.
We define in this way  minimizing measures, named after Mather,  associated to both the ergodic  and discounted equations.

 For the ergodic equation, they coincide with the classical Mather measures given when the Hamiltonian is in addition Tonelli  and the ambient space compact.
 We also recover the relevant property that the closure of the  union of the supports of such measures is an uniqueness set for the weak KAM solutions, see Section \ref{mather}.

Our procedure  is  close in spirit to  Evans interpretation of Mather theory
in terms of complementarity problems, see \cite{E1}, \cite{E2}, and also \cite{G2005}. We think that this alternative approach is interesting per se and can
handle to extend the asymptotic result to  more general setting, for instance in the case of fully nonlinear second order equations (see \cite{IMT2017} for
such generalizations).

The idea of performing some  duality between generalized Lagrangians and measures,  in order to study the asymptotic of solution to discounted equations,  has been introduced in \cite{IMT2017}. The authors  however use as duality tool the  Sion minimax Theorem, while we instead employ  a separation result for convex subsets in locally convex Hausdorff space, see Appendix \ref{strict}.

 It  implies that the normal cone at any element of the boundary of a convex set with nonempty interior has nonzero elements. We actually find the Mather measures as elements, up to change of sign, of the normal cone at $L$ of suitable convex sets
in the space of generalized Lagrangians.
We need  for this an appropriate topological frame.

We consider the space of bounded continuous functions
from $\R^{2N}$ to $\R^N$  equipped with
the so--called strict topology, see Appendix \ref{strict}. In this case a nice
 generalization  of Riesz representation theorem holds true, namely the    topological dual  is the space of   signed
Borel  measures with bounded variation with the narrow topology as
 corresponding weak star topology.

To implement our method,  some effort has been put into constructing
 convex subsets of the space of bounded generalized Lagrangians
with nonempty interior and  $L  \wedge M$ as boundary point, for  suitable constants $M$.
To this aim, we have preliminarily proved  some localization results for both the ergodic and discounted equations, see Section \ref{locale}
and Propositions \ref{ana}, \ref{barra}.
\bigskip

\section*{Acknowledgements} The first author thanks the Department of
Mathematics at the Sapienza University of Rome
for its financial support and warm hospitality while
his visits (May 2018 and May 2019). The authors thank
the Department of Mathematics at Nanjing University
for its financial support and kind hospitality while they were visiting there (July 2018)
and also thank
the Mathematical Sciences Research
Institute in Berkeley for its financial support and warm hospitality while
their visit (HI: October 2018, AS: September--December 2018).

\section{Setting}

Given $R >0$,  we denote by $B_R$ the open ball of $\R^N$ or $\R^N \times \R^N$ centered  at $0$ with radius $R$, we write insead
$B(x_0,R)$ if the center is at $x_0$. Given two elements
$x$, $y$ of $\R^N$, we write $x \cdot y$ to indicate their scalar product. For any subset $E$ of a topological space, we denote by $\overline E$,
$\Int \, E$, $\partial E$ its closure interior and boundary, respectively.  If $u$ is a locally Lipschitz continuous function from $\R^N$ to $\R$ we define
its (Clarke) generalized  gradient at some point $x$ via
\[\partial u(x) = \mathrm{co} \{\lim_i Du(x_i) \mid x_i \to x,\; x_i \;\hbox{differentiability points of $u$}\}\]
where $\mathrm{co}$ stands for convex hull.

We consider an Hamiltonian $H: \R^N \times \R^N \to \R$ satisfying the following conditions

\[\tag{A1}\label{A1}
\begin{minipage}{0.8\textwidth} $H\in C(\R^N\times\R^N).$
\end{minipage}
\]
\[
\tag{A2}\label{A2}
\begin{minipage}{0.8\textwidth}
$H$ is convex and coercive, that is, for any $x\in\R^N$, the function $H(x,\cdot)$ is convex in $\R^n$ and for any $R>0$,
\[
\lim_{R\to\infty}\inf\{H(x,p)\mid x\in B_R,\ p\in \R^N\setminus B_R\}=+\infty.
\]
\end{minipage}
\]
\[\tag{A3}\label{A3} \begin{minipage}{0.8\textwidth}
There exists $\eps >0 $ such that
\[
  \limsup_{|x| \to + \infty} \,  \max_{p \in B_\eps} H(x,p) < \max_{x \in \R^N}  \, \min_{p\in \R^N} H(x,p)
\]
\end{minipage}
\]
\smallskip

We further consider the discount problem for the Hamilton-Jacobi equation
\begin{equation}  \tag{DP}\label{DP}
\la u+H(x,Du)= c \ \ \text{ in }\ \R^N,
\end{equation}
 and the associated  ergodic  problem
\[ H(x,Du)=c \ \ \ \text{ in }\ \R^N, \]
where  $\la>0$ is a given constant, and
\begin{equation}\label{defc}
  c= \inf \{ a \mid H(x,Du)=a \;\hbox{admits  subsolutions in $\R^N$}\}
\end{equation}
is the so--called critical value of $H$. We will show that in our setting it is finite and is actually a minimum.  Here and in what follows,
 the terms  solutions,  subsolutions, and
  supersolutions of Hamilton-Jacobi equations must be understood in the viscosity sense. We henceforth suppress the adjective word
{\em viscosity}. We record for later use a weaker version of \eqref{A3}:

\[\tag{A3'}\label{A3'} \begin{minipage}{0.8\textwidth}
There exists $\eps >0 $ such that
\[ \limsup_{|x| \to + \infty} \,  \max_{p \in B_\eps} H(x,p) < c. \]
\end{minipage}\]
It is clear that the critical  value is greater than or equal to the right hand--side of \eqref{A3}. The advantage of the formulation
\eqref{A3} is that that the quantity in the  right hand--side is {\em observable} for any given Hamiltonian while  the critical value could
 be in general not easy to compute.

We can assume by normalization that $c=0$ and, consequently, the ergodic  problem
 is stated as
\[ \tag{EP}\label{EP} H[u]=0 \ \ \text{ in }\ \R^n.\]
Here we mean that the problems involving the Hamiltonian $H$ are normalized
so as to $c$ when $H$ is replaced by $H-c$.  Note that if $H$ satisfies (A1)--(A3) and
$c\in\R$, then $H-c$ satisfies (A1)--(A3) as well.
We interpret \eqref{DP} as an approximation procedure for
\eqref{EP} when $\la$ is sent to $0$.

\smallskip

The first author  studied, under assumptions to be compared with  (A1)--(A3), the large time
behavior of the Hamilton-Jacobi equation  in $\R^N$ in \cite{HI2008}.

\smallskip

Condition \eqref{A3}  implies:

\begin{Proposition}\label{prescorcia} Assume that $u$ is a subsolution of $H[u]=a$ for some $a \in \R$
and $K$ a compact subset of $\R^N$. Then there exists a   subsolution of the same equation, constant outside some compact  subset,
coinciding with $u$ on $K$.
\end{Proposition}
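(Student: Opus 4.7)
The key input is that, since the existence of the subsolution $u$ forces $a\ge c$, assumption \eqref{A3} (equivalently, \eqref{A3'}) yields constants $R_0>0$ and $\delta>0$ such that
\[
 H(x,p)\le a-\delta \qquad\text{for every }|x|\ge R_0 \text{ and every } |p|\le\eps.
\]
Consequently, on $\R^N\setminus\overline B_{R_0}$ every $\eps$--Lipschitz function, and in particular every constant, is a (viscosity) subsolution of $H[\,\cdot\,]=a$. In addition, the convexity of $H(x,\cdot)$ from \eqref{A2} guarantees that both $\max$ and $\min$ of (continuous) viscosity subsolutions remain viscosity subsolutions: the $\max$ statement is standard, while the $\min$ statement uses the characterization of viscosity subsolutions of convex coercive Hamilton--Jacobi equations as locally Lipschitz functions satisfying the subsolution inequality almost everywhere.

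With these tools the plan is to ``cap'' $u$ both from above and from below, outside a large ball, by two radial $\eps$--Lipschitz barriers. Fix $R_1>R_0$ with $K\subset B_{R_1}$; set $M:=\max_{\overline B_{R_1}}u$ and $m:=\min_{\overline B_{R_1}}u$, both finite since $u$ is locally Lipschitz; and pick $R_2>R_1$ such that $2\eps(R_2-R_1)>M-m+2$, so that the constants $C':=M+1-\eps(R_2-R_1)$ and $C:=m-1+\eps(R_2-R_1)$ satisfy $C>C'$. Define
\[
\phi(x):=\begin{cases}M+1 & |x|\le R_1,\\ M+1-\eps(|x|-R_1) & R_1\le |x|\le R_2,\\ C' & |x|\ge R_2,\end{cases}
\qquad
\psi(x):=\begin{cases}m-1 & |x|\le R_1,\\ m-1+\eps(|x|-R_1) & R_1\le |x|\le R_2,\\ C & |x|\ge R_2.\end{cases}
\]
Both are $\eps$--Lipschitz, and their restriction to $\R^N\setminus\overline B_{R_1}\subseteq\{|x|>R_0\}$ is a viscosity subsolution of $H=a$ by the opening observation.

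Finally set $v:=\max\bigl(\min(u,\phi),\,\psi\bigr)$. Three short verifications then close the argument: (i) on $\overline B_{R_1}$ one has $\phi\equiv M+1>u$ and $\psi\equiv m-1<u$, hence $v=u$ there, and in particular $v=u$ on $K$; (ii) outside $\overline B_{R_2}$ one has $\min(u,\phi)\le\phi=C'<C=\psi$, hence $v\equiv C$; (iii) the strict inequalities $\phi>u>\psi$ on the compact $\overline B_{R_1}$ persist on an open neighbourhood by continuity, so $v\equiv u$ in a neighbourhood of $\overline B_{R_1}$ and is a subsolution there, whereas on the open set $\R^N\setminus\overline B_{R_1}$ the three functions $u,\phi,\psi$ are subsolutions and so $v$ is too, by the stability recalled above. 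The only genuinely delicate step, and the main obstacle of the proof, is this stability of viscosity subsolutions under $\min$, which crucially uses the convexity of $H$; granted that, the barrier construction above is elementary.
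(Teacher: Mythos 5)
Your proof is correct and follows essentially the same route as the paper's: both exploit \eqref{A3} to turn small-Lipschitz radial barriers into subsolutions outside a large ball and then perform the $\max(\min(u,\cdot),\cdot)$ surgery, the only difference being that the paper caps from above with the unbounded cone $-\tfrac{\eps}{2}|x|+b$ and from below with a constant, while you use two truncated cones. One cosmetic imprecision: \eqref{A3} and \eqref{A3'} are not equivalent (the latter is weaker), and your appeal to \eqref{A3'} via $a\ge c$ tacitly uses the finiteness of $c$ (Lemma \ref{lemc}), whereas the paper obtains the needed inequality $\limsup_{|x|\to\infty}\max_{p\in B_\eps}H(x,p)<a$ directly from \eqref{A3} together with $\max_x\min_p H(x,p)\le a$.
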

\begin{proof}  Due to \eqref{A3} and  $H=a$ admitting subsolutions, we have
\[ a > \limsup_{|x| \to + \infty} \max_{p \in B_\eps} H(x,p) \quad\hbox{for some $\eps >0$,}\]
 we can therefore  take a compact subset $C$ with
\[ \Int \, C \supset K \cup \left  \{ x \mid \max_{p \in B_\eps} H(x,p) \geq  a \right \} .\]
We set $\phi(x)= - \frac \eps 2 \, |x|$  and select  $b >0 $ with $\min_C (\phi + b) > \max_C u$.  Because of the definition of $C$, the function
\[v = \min \{ \phi +b, u\}\]
is a subsolution of $H=a$ with $v =u$ on $K$ and
\[\lim_{|x| \to + \infty} v = - \infty.\]
 The function
\[w_0= \max \{ v, \min_C u\},\]
satisfies the assertion.
\end{proof}

\bigskip

\section{Maximal subsolutions of \eqref{DP}}\label{DDPP}

The first result of the section is

\begin{Proposition}\label{prop4-2}
  The  family of subsolutions to \eqref{DP} is locally equibounded from above, when   $\lambda$ varies in $(0,+\infty)$.
\end{Proposition}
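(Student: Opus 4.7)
The plan is to construct a function $V\colon\R^N\to\R$ that is simultaneously, for every $\lambda>0$, a viscosity supersolution of \eqref{DP} and is locally bounded on $\R^N$; the desired uniform local upper bound on the family of subsolutions of \eqref{DP} will then follow from the pointwise comparison $u\le V$.

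I would take $V$ to be a non-negative shift of a weak KAM solution $v$ of the critical equation $H(x,Dv)=0$. Under (A1)--(A3), such $v$ exists and is bounded from below, as produced by the Lax--Oleinik representation $v(x)=\min_{y\in\mathcal{A}}\{v(y)+S_0(y,x)\}$ based on the compact Aubry set $\mathcal{A}$ (whose compactness is ensured by (A3')). Setting $V:=v-\inf v\ge 0$, the function $V$ is still a critical supersolution, so $\lambda V(x)+H(x,DV(x))\ge \lambda V(x)+0\ge 0$ in the viscosity sense for every $\lambda>0$. Moreover, the intrinsic distance $S_0$ being equivalent to the Euclidean one at infinity (a consequence of (A3')) gives $V(x)\to+\infty$ as $|x|\to\infty$.

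To pass from this supersolution property to the desired comparison $u\le V$, it is enough to ensure that $u-V$ attains its supremum on $\R^N$; since $V$ grows at least linearly at infinity, this reduces to a $\lambda$-uniform at-most-linear growth estimate on subsolutions of \eqref{DP}. I would obtain this using the barrier $\phi(x)=-\tfrac{\varepsilon}{2}|x|$ appearing in the proof of Proposition~\ref{prescorcia}: by (A3'), $\phi$ is a strict viscosity subsolution of $H=0$ outside a large ball $B_R$, and a doubling-of-variables argument comparing $u$ to a suitable shift of $\phi$ yields a bound of the form $u(x)+\tfrac{\varepsilon}{2}|x|\le C$, with $C$ independent of $\lambda$. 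Once this linear growth estimate is in hand, $u-V\to-\infty$ as $|x|\to\infty$, so $u-V$ attains its sup, and a standard doubling-of-variables comparison at the maximum gives $u\le V$ on $\R^N$; restricting to any compact $K$ then yields $\sup_K u\le\sup_K V<\infty$ uniformly in $\lambda$.

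The main obstacle I anticipate is the preliminary $\lambda$-uniform linear growth estimate on subsolutions. The discounted subsolution inequality $\lambda u+H(x,Du)\le 0$ by itself only controls $H(x,Du)$ through $\lambda u$, so a naive coercivity argument leaves the implicit Lipschitz constant depending on $\lambda\|u\|_\infty$ and hence is not $\lambda$-uniform. The point is to exploit the strict asymptotic negativity of $H$ encoded in (A3') via the explicit barrier $\phi$ to obtain a growth bound that does not involve $\lambda$; once this is achieved, the comparison inequality and therefore the local equiboundedness of the family follow by routine viscosity arguments.
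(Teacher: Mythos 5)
Your overall strategy---produce one nonnegative supersolution of \eqref{EP}, observe that it is then a supersolution of \eqref{DP} for every $\la>0$ simultaneously, and compare---is attractive, but the step on which everything hinges fails. The estimate you propose, $u(x)+\tfrac{\eps}{2}|x|\le C$ uniformly over subsolutions $u$ of \eqref{DP} and over $\la$, is false: whenever $H(\cdot,0)\le 0$ on all of $\R^N$ (which is compatible with (A1)--(A3) and $c=0$; take e.g.\ $H(x,p)=|p|-f(x)$ with $f\ge 0$, $\min f=0$, $\liminf_{|x|\to\infty}f>0$), the function $u\equiv 0$ is a subsolution of \eqref{DP} for every $\la$ and does not decay. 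The mechanism is also backwards: by \eqref{A3'} the function $\phi(x)=-\tfrac{\eps}{2}|x|$ is a \emph{strict subsolution} of $H=0$ near infinity, and comparing a subsolution $u$ against another subsolution can never yield an upper bound on $u$; for that you need a supersolution of \eqref{DP} on the exterior region, and $C-\tfrac{\eps}{2}|x|$ is not one, since $\la\bigl(C-\tfrac{\eps}{2}|x|\bigr)+H\bigl(x,-\tfrac{\eps}{2}x/|x|\bigr)<0$ for $|x|$ large. Even the weaker statement your comparison actually needs (growth of rate strictly smaller than that of $V$, uniformly in $\la$) is not established and is not obvious: on $\{u\ge 0\}$ the only a priori information is $Du(x)\in\{p\mid H(x,p)\le 0\}$, which gives growth governed by the same support function $\si_0$ that governs the growth of $V$, with no strict gap. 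Without this step $u-V$ need not attain its supremum and the global comparison collapses; note that the whole difficulty of the proposition is precisely to get an upper bound at some point that does not degenerate, and your plan defers exactly this to the unproved Step 3. (A secondary point: weak KAM solutions and Theorem \ref{superKAM} are developed only later in the paper; invoking them here is not circular, but it inverts the logical order.)

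The paper avoids all behaviour at infinity by localizing. Lemma \ref{lem1-3} produces, for each ball $B_R$, a function $\psi_R\in C^1(B_R)$ with $\psi_R\to+\infty$ and $|D\psi_R|\to+\infty$ as $|x|\to R^-$; by the coercivity \eqref{A2} the map $x\mapsto H(x,D\psi_R(x))$ then blows up at $\partial B_R$, hence is bounded below by some $-C_R$ on $B_R$. After normalizing $\psi_R\ge 0$, the function $v=\psi_R+\la^{-1}C_R$ satisfies $\la v+H(x,Dv)\ge 0$ in $B_R$ and blows up at $\partial B_R$, so $u-v$ attains an interior maximum and the comparison $u\le v$ in $B_R$ is immediate, giving $u\le \la^{-1}C_R+\|\psi_R\|_{\infty,B_{R/2}}$ on $B_{R/2}$. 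The structural lesson is that an interior blow-up barrier removes any need to control subsolutions at infinity. If you wish to salvage your idea of a $\la$-independent supersolution $V=v-\inf v$ built from a weak KAM solution, you would still have to force the comparison to localize, for instance by adding the barrier $\psi_R$ of Lemma \ref{lem1-3} to $V$---at which point the weak KAM input is doing no essential work.
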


\smallskip

A lemma is preliminary

\smallskip

\begin{Lemma} \label{lem1-3}
For each $R>0$, there exist a constant $C_R>0$ and a function
$\psi_R\in C^1(B_R)$ such that
\[H[\psi_R]> -C_R \ \ \ \text{ in }\ B_R,\quad \text{and} \quad
\lim_{|x|\to R^-}\psi_R(x)=+\infty. \]
\end{Lemma}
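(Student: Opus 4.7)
The natural candidate is an explicit barrier that blows up at $\partial B_R$ and has a gradient whose magnitude also blows up at $\partial B_R$, while staying bounded on compact subsets of $B_R$. The plan is to take
\[
\psi_R(x) = -\log(R^2-|x|^2),
\]
which is manifestly $C^\infty(B_R)$ and satisfies $\psi_R(x)\to +\infty$ as $|x|\to R^-$. A direct computation gives
\[
D\psi_R(x) = \frac{2x}{R^2-|x|^2}, \qquad |D\psi_R(x)|=\frac{2|x|}{R^2-|x|^2},
\]
so $|D\psi_R|$ is continuous on $B_R$, bounded on any compact subset of $B_R$, and tends to $+\infty$ as $|x|\to R^-$.

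The key point is then to control $H(x,D\psi_R(x))$ from below in two regimes, using the coercivity assumption \eqref{A2} restricted to the bounded set of $x$'s in $\overline{B_R}$. First, I fix $M>0$ large enough that
\[
\inf\{H(x,p)\mid x\in \overline{B_R},\ |p|\ge M\}\ge 0;
\]
such an $M$ exists by \eqref{A2}. On the set where $|D\psi_R(x)|\ge M$ we immediately get $H(x,D\psi_R(x))\ge 0$.

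Second, the complementary set $K:=\{x\in B_R : |D\psi_R(x)|\le M\}$ is a closed subset of $B_R$ that stays away from $\partial B_R$ (because $|D\psi_R|\to \infty$ there), hence compact. Since $H$ is continuous and $D\psi_R$ is bounded on $K$, the quantity $H(x,D\psi_R(x))$ is bounded on $K$. Choosing
\[
C_R := 1 + \sup_{x\in K}|H(x,D\psi_R(x))|
\]
yields $H(x,D\psi_R(x))\ge -(C_R-1) > -C_R$ on $K$ and $H(x,D\psi_R(x))\ge 0>-C_R$ on $B_R\setminus K$, giving the required strict inequality $H[\psi_R]>-C_R$ on all of $B_R$.

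There is no real obstacle here: the only substantive ingredient is the uniform coercivity of $H$ on $\overline{B_R}\times\R^N$ furnished by \eqref{A2}, which turns the potentially dangerous blow-up of $|D\psi_R|$ near $\partial B_R$ into an advantage (pushing $H(x,D\psi_R)$ upward rather than downward). Any other explicit barrier with the same two features (smoothness on $B_R$, gradient bounded on compacta but blowing up at the boundary), e.g.\ $\psi_R(x)=(R^2-|x|^2)^{-1}$, would work equally well.
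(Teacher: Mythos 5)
Your proof is correct and follows essentially the same strategy as the paper: choose a $C^1$ barrier on $B_R$ that blows up at $\partial B_R$ together with its gradient, then use the locally uniform coercivity \eqref{A2} to conclude that $H(x,D\psi_R(x))$ is bounded below (the paper phrases this as the continuous function $x\mapsto H(x,D\psi_R(x))$ tending to $+\infty$ at the boundary and hence attaining a minimum, while you split into the two regimes $|D\psi_R|\ge M$ and its compact complement, which is the same mechanism). Your version is merely more explicit in its choice of $\psi_R$.
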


\begin{proof}  Fix $R>0$ and choose a function $\psi_R\in C^1(\R^N)$
so that
\[
\lim_{|x|\to R^-}\psi_R(x)=+\infty \quad\hbox{and}\quad \lim_{|x|\to R^-}|D\psi_R(x)|=+\infty.
\]
Observe that
\[
x\mapsto H(x,D\psi_R(x))
\]
is continuous on $B_R$ and that
\[
\lim_{|x|\to R^-}H(x,D\psi_R(x))=+\infty.
\]
It is now obvious that
\[
x\mapsto H(x,D\psi_R(x))
\]
has a minimum in $B_R$. Thus, for some constant $C_R>0$,
\[H(x,D\psi_R(x))\geq -C_R  \qquad\hbox{ in }\ B_R. \]
\end{proof}

\smallskip

\begin{proof} [Proof of Proposition   \ref{prop4-2}]
Let $u$ be  any  subsolution of \eqref{DP}, for some $\lambda >0$. Fix  $R>0$.
According to Lemma \ref{lem1-3}, there are a function $\psi\in C^1(B_R)$ and a constant $ b>0$ such that
\[
H(x,D\psi(x))\geq -b \ \ \ \text{ for }\ x\in B_R \ \ \ \hbox{and} \ \ \
\lim_{|x|\to R^-}\psi(x)=+\infty.
\]
By adding a constant to $\psi$ if necessary, we may assume that
$\psi\geq 0$ in $B_R$.

Set
\[  v(x)= \psi(x)+ \lambda^{-1}  \, b  \qquad \hbox{ for } \; x \in B_R,\]
and note that
\begin{equation}
\label{lem1-3-1}
\lambda v(x)+H(x,Dv(x)) \geq \lambda \, \lambda^{-1} \, b -b = 0
\qquad \hbox{ for }\; x\in B_R.
\end{equation}

We prove that
\begin{equation}\label{lem1-3-2}
u\leq v \qquad \hbox{ in }\ B_R.
\end{equation}
By contradiction, we suppose that $\sup_{B_R}(u-v)>0$.
Since
\[
\lim_{|x|\to R^-}(u-v)(x)=-\infty,
\]
the function $u-v$ has a maximum point   at some $x_0\in B_R$
and hence, by the viscosity property of $u$
\[ \lambda u(x_0)+H(x_0,D\psi(x_0))\leq 0 \]
which yields, since $u(x_0)> v(x_0)$
\[\lambda v(x_0)+H(x_0,D\psi(x_0)) \leq 0. \]
contradicting \eqref{lem1-3-1}.

From \eqref{lem1-3-2}, we get
\[ u(x)\leq v(x)\leq \lambda^{-1} \, b+\|\psi\|_{\infty, B_{R/2}}
\qquad\hbox{ for all }\ x\in B_{R/2}. \]
This gives the assertion.
\end{proof}

\smallskip
In view of the Perron method and \eqref{A2}, we directly derive:
\begin{Theorem} \label{thm1-4}  There exists, for each
$\lambda >0$, a maximal viscosity solution $u_\lambda$ of \emph{\eqref{DP}},   which is   locally Lipschitz continuous.
\end{Theorem}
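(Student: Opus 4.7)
The plan is to run the classical Perron method on $\R^N$, using Proposition~\ref{prop4-2} to supply the a~priori upper bound and \eqref{A3} to produce an initial subsolution; local Lipschitz regularity will then follow from coercivity alone.

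Fix $\la>0$. By \eqref{A1} and \eqref{A3} (applied with $p=0\in B_\eps$), the continuous function $x\mapsto H(x,0)$ has finite limsup at infinity and is therefore bounded above on $\R^N$; hence for $M$ large enough the constant $w\equiv -M$ satisfies $\la w+H(x,0)\le c$ everywhere and is a viscosity subsolution of \eqref{DP}. I would then define
\[
u_\la(x):=\sup\{w(x):w\text{ is a viscosity subsolution of \eqref{DP}}\},
\]
which is locally bounded from above by Proposition~\ref{prop4-2} and locally bounded from below by the constant subsolution just constructed.

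Perron's method now applies in its usual form. The upper semicontinuous envelope $u_\la^*$ is again a subsolution by stability of viscosity subsolutions under pointwise suprema, forcing $u_\la=u_\la^*$ by maximality; and the supersolution inequality for the lower semicontinuous envelope $(u_\la)_*$ follows by the standard local-bump argument: if it failed at some $x_0$, continuity of $H$ would allow one to perturb $u_\la$ upward on a small ball around $x_0$ (via a maximum with a suitable smooth strict subsolution) and obtain a strictly larger subsolution of \eqref{DP}, contradicting the definition of $u_\la$. Thus $u_\la$ is a continuous, maximal viscosity solution of \eqref{DP}.

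For local Lipschitz regularity I would appeal to coercivity in \eqref{A2}: since $u_\la$ is locally bounded and satisfies $H(x,Du_\la)\le c-\la u_\la$ in the viscosity sense, the right-hand side is locally bounded, and coercivity then yields, for each $R>0$, a constant $K_R$ such that the subsolution inequality forces any ``viscosity gradient'' of $u_\la$ on $B_R$ to have norm at most $K_R$; comparison with the cones $x\mapsto u_\la(x_0)+K_R|x-x_0|$ delivers the desired Lipschitz estimate on $B_{R/2}$. The one place where noncompactness of the ambient space could have caused genuine trouble is in ensuring that the supremum defining $u_\la$ remains finite everywhere, so that Perron's construction actually closes up; this is precisely what Proposition~\ref{prop4-2} rules out, and beyond that I do not anticipate any obstacle outside the standard Perron--coercivity template.
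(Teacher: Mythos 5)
Your proposal is correct and follows exactly the route the paper intends: it states Theorem \ref{thm1-4} as a direct consequence of the Perron method together with the coercivity assumption \eqref{A2}, with Proposition \ref{prop4-2} supplying the upper bound and the constant subsolution (available by \eqref{A3}, as also used in Proposition \ref{square}) supplying the lower bound. You have simply written out the standard details that the paper leaves implicit.
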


\smallskip

From now on, we denote by $u_\lambda$, for any $\lambda >0$,   the maximal
(sub)solution of \eqref{DP}.
\smallskip

\begin{Lemma}\label{lem4-2}  The  functions $u_\lambda$ are equibounded from below in $\R^N$, for $\lambda >0$
\end{Lemma}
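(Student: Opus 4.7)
The plan is to exhibit a single function $\tilde w$ that is a subsolution of \eqref{DP} simultaneously for every $\lambda > 0$ and is bounded below; the maximality of $u_\lambda$ then forces $u_\lambda \geq \tilde w$ uniformly in $\lambda$.

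The key observation I would use is an elementary monotonicity: if $w$ is any subsolution of the critical equation $H[w] = 0$ satisfying $w \leq 0$ on $\R^N$, then for every $\lambda > 0$,
\[ \lambda w + H(x, Dw) \leq 0 \]
in the viscosity sense, since $\lambda w \leq 0$ pointwise. Hence any bounded subsolution of the critical equation, after subtracting its supremum, becomes a nonpositive bounded subsolution of every \eqref{DP}.

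To produce a bounded critical subsolution I would invoke Proposition \ref{prescorcia}. By the hypotheses of the setting, the critical value $c = 0$ is attained, so at least one critical subsolution $u_0$ exists; assumption \eqref{A3} yields a fortiori the strict inequality
\[ \limsup_{|x| \to +\infty}  \max_{p \in B_\eps} H(x,p) < 0 \]
for some $\eps > 0$, that is, \eqref{A3'} holds at the level $a = 0$. Applying Proposition \ref{prescorcia} to $u_0$ with $a = 0$ and any nonempty compact set $K$ yields a subsolution $w$ of $H[w] = 0$ that is constant outside a compact set, and in particular bounded on all of $\R^N$. Setting $\tilde w := w - \sup_{\R^N} w$ gives the desired nonpositive bounded subsolution of every \eqref{DP}.

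Finally, the maximality of $u_\lambda$ delivers
\[ u_\lambda(x) \geq \tilde w(x) \geq \inf_{\R^N} \tilde w > -\infty \]
for every $\lambda > 0$ and every $x \in \R^N$, which is the required uniform lower bound. I do not foresee a substantive obstacle: the only point requiring care is that Proposition \ref{prescorcia} is genuinely applicable at the critical level $a = c = 0$, and this is exactly what the strict inequality in \eqref{A3'} guarantees.
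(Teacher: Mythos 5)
Your proposal is correct and follows essentially the same route as the paper: Proposition \ref{prescorcia} produces a bounded (constant outside a compact set) critical subsolution, which after subtracting its supremum becomes a nonpositive subsolution of \eqref{DP} for every $\lambda>0$, and maximality of $u_\lambda$ gives the uniform lower bound. The extra care you take in verifying that Proposition \ref{prescorcia} applies at the level $a=c=0$ via the strict inequality in \eqref{A3'} is a point the paper leaves implicit.
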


\begin{proof}  By Proposition \ref{prescorcia} there exists a compactly supported subsolution $w$ of \eqref{EP}.
Let  $b>0$  an upper bound of $|w(x)|$ in $\R^N$,  then
 the nonpositive function $w-b$ is a
subsolution of \eqref{DP}, for any $\lambda>0$.
By the maximality of $u_\lambda $ among the subsolutions
of \eqref{DP}, we conclude that \ $ u_\lambda \geq  w-b\geq -2 \, b$ in $\R^N$.
\end{proof}
\smallskip

\def\gl{\lambda}\def\fr{\frac}\def\gd{\delta}
Here we digress slightly from the streamline and consider an example where
$N=1$ and $H(x,p)=|p|-|x|$ for $(x,p)\in\R^2$.
By solving the equations
\[
\gl u(x)+u'(x)=x \ \ \text{ and } \quad \gl u(x)-u'(x)=x \quad \text{ for }x>0,
\]
where $\gl>0$, we easily see that the functions
\[
u_+(x):=\fr{|x|}{\gl}+\fr{1}{\gl^2}(-1+e^{-\gl |x|}),
\]
and
\[
u_C(x):=\fr{|x|}{\gl}+\fr{1}{\gl^2}(1-Ce^{\gl|x|}),
\]
with $C\geq 1$, are solutions of
\begin{equation}\label{ex-81}
\gl u(x)+|u'(x)|=|x| \ \ \text{ in }\R.
\end{equation}
We can prove the following uniqueness claim: if $u$ is a solution of \eqref{ex-81}
that satisfies
\begin{equation} \label{ex-811}
\liminf_{|x|\to \infty}(u(x)+\gd e^{\gl|x|})>0 \ \ \text{ for all }\gd>0,
\end{equation}
then $u=u_+$. In particular, we have $u_\gl=u_+$ in this example. That is,
the maximal solution $u_\gl$ of \eqref{ex-81} is characterized as the unique
solution of \eqref{ex-81} that satisfies \eqref{ex-811}. This example tempts us to conjecture that, in our standing assumptions, the maximal solution $u_\gl$ is a
``unique'' solution of \eqref{DP} that is bounded from below. The authors are not able
to show the uniqueness of those solutions of \eqref{DP} that are bounded from below.

A brief idea to check the uniqueness claim above is that if $u$ is a solution of
\eqref{ex-81} and \eqref{ex-811}, then consider the function
\[
w(x):=(1-\gd)u_+(x)-\gd e^{\gl|x|} \ \ \text{ on }\R
\]
for small $\gd\in (0,\,1)$, observe that $w$ is a subsolution of \eqref{ex-81}
and
\[
\limsup_{|x|\to\infty}(w(x)-u(x))=-\infty,
\]
and apply a standard comparison theorem in a large interval $[-R,\,R]$,
to see that  $w\leq u$ in $\R$, which implies in the limit as $\gd\to 0$ that $u_+\leq u$. Observing by \eqref{ex-81} that $u(x)\leq |x|/\gl$ for all $x\in\R$, we may repeat  an argument, parallel to the above,
with $w$ and $u$ replaced by $(1-\gd)u-\gd e^{\gl|x|}$ and $u_+$, respectively,
to conclude that $u\leq u_+$.

\smallskip

\begin{Proposition}\label{propo}
The family $u_\lambda$, for $\lambda >0$, is relatively compact in $C(\R^N)$.
\end{Proposition}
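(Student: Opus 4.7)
The plan is to invoke the Arzelà--Ascoli theorem on $C(\R^N)$: it suffices to show that the family $\{u_\lambda\}_{\lambda>0}$ is locally equibounded and locally equicontinuous on $\R^N$, both uniformly in $\lambda$. The uniform lower bound $u_\lambda \geq -2b$ is already provided by Lemma \ref{lem4-2}. Local equi-Lipschitz continuity will then come essentially for free once a uniform local $L^\infty$-bound on $u_\lambda$ is secured: since $u_\lambda$ is locally Lipschitz by Theorem \ref{thm1-4}, the subsolution condition gives $H(x, Du_\lambda) \leq -\lambda u_\lambda$ almost everywhere, so a uniform local upper bound combined with the coercivity \eqref{A2} produces a gradient bound $|Du_\lambda| \leq \rho_K$ on every compact $K$, independent of $\lambda$ in any bounded range.

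The core of the proof is therefore the uniform local upper bound. My approach is to construct a locally bounded supersolution of \eqref{DP} that does not depend on $\lambda$, and then dominate $u_\lambda$ by it via viscosity comparison on $\R^N$. A natural candidate is $V := V_0 + M$, where $V_0$ is a weak KAM solution of \eqref{EP} (bounded from below in our setting, by the characterization quoted in the introduction) and $M \geq 0$ is large enough that $V \geq 0$ in $\R^N$. Since $V_0$ is in particular a viscosity supersolution of $H[u] = 0$, one has $\lambda V + H(x, DV) \geq \lambda V \geq 0$, so $V$ is a supersolution of \eqref{DP} for every $\lambda > 0$. Assumption \eqref{A3}, through the equivalence of the intrinsic distance $S_0$ with the Euclidean one at infinity, forces $V(x) \to +\infty$ as $|x| \to \infty$; this, combined with the uniform lower bound on $u_\lambda$ and the fact that any subsolution of \eqref{DP} grows at most at the rate of the intrinsic distance, ensures that $u_\lambda - V$ is bounded above at infinity. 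A standard doubling-of-variables viscosity comparison argument (coercivity controlling the momenta of the doubled variables, together with a coercive penalty at infinity if needed to force the penalized maximum into a compact set) then yields $u_\lambda \leq V$ in $\R^N$. Since $V$ is $\lambda$-independent and locally bounded, this delivers the required uniform local upper bound, and Arzelà--Ascoli concludes.

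I expect the main technical obstacle to be precisely this global comparison step on $\R^N$: classical comparison theorems for \eqref{DP} rest on growth or boundedness hypotheses at infinity, and in the absence of such control the maximal solution $u_\lambda$ may itself grow linearly in $\lambda^{-1}$, as the explicit example preceding the statement illustrates. The role of \eqref{A3} is exactly to ensure that $V$ grows fast enough to dominate $u_\lambda$ at infinity, while the uniform lower bound on $u_\lambda$ and the coercivity-driven bound on the test-function momenta keep the penalized maximum of the doubling scheme inside a compact set, so that the comparison argument can be closed.
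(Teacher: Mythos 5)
Your overall skeleton --- Arzel\`a--Ascoli, the uniform lower bound from Lemma \ref{lem4-2}, and the passage from a local $L^\infty$ bound to a local gradient bound via $H(x,Du_\lambda)\le -\lambda u_\lambda$ and the coercivity \eqref{A2} --- is exactly the paper's proof, which consists of citing Proposition \ref{prop4-2} and Lemma \ref{lem4-2} for local equiboundedness and then extracting equi-Lipschitz continuity from the equation. (A minor slip: it is the \emph{lower} bound on $u_\lambda$ that controls $-\lambda u_\lambda$ from above and hence bounds $H(x,Du_\lambda)$, not the upper bound.) Where you diverge is in how you obtain the local upper bound, and that is where your argument has a genuine gap.

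You propose a global comparison on $\R^N$ between $u_\lambda$ and $V=V_0+M$, $V_0$ a weak KAM solution. The supersolution property of $V$ and its coercivity at infinity are fine, but the step ``$u_\lambda-V$ is bounded above at infinity'' is not justified by the reason you give. A subsolution of \eqref{DP} bounded below by $-2b$ satisfies $H(x,Du_\lambda)\le-\lambda u_\lambda\le 2b\lambda$, so it grows at most at the rate of the \emph{supercritical} intrinsic distance $S_a$ with $a=2b\lambda>0$, not of $S_0$; and $S_a-S_0$ is in general unbounded at infinity (for $H(x,p)=|p|-|x|$ one computes $S_a(0,x)-S_0(0,x)=a\,|x|$). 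So this reasoning does not confine the comparison to a compact set, and the coercive penalty you mention as a fallback perturbs the subsolution inequality by an amount you have no room to absorb unless you first build a strict subsolution structure at infinity out of \eqref{A3'} --- none of which is set up. The paper sidesteps all behavior at infinity: in Proposition \ref{prop4-2} the comparison is run in a fixed ball $B_R$ against the supersolution $\psi_R+\lambda^{-1}b$ of Lemma \ref{lem1-3}, which blows up at $\partial B_R$, so the maximum of the difference is automatically interior and no argument at infinity is needed. Note, however, that the bound so obtained is $\lambda^{-1}b+\|\psi_R\|_{\infty,B_{R/2}}$; the genuinely $\lambda$-uniform upper bound you are aiming at as $\lambda\to 0^+$ is stronger than what that local comparison records, so your instinct that this is the delicate point of the proposition is not misplaced --- but the specific global comparison you sketch does not close it.
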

\begin{proof} We already  know from  Proposition \ref{prop4-2}  and Lemma \ref{lem4-2}  that the $u_\lambda$ are locally equibounded. This implies that for any $R >0$ there exists a constant $b_R$ with
\[ H[u_\lambda] \leq b_R \qquad\hbox{in $B_R$, for any $\lambda > 0$.}\]
Taking into account  the coercivity condition \eqref{A2},  we derive from the above inequality that the $u_\lambda$ are equiLipschitz--continuous in $B_R$, for any $R >0$. This concludes the proof.
\end{proof}

\smallskip

We derive from the previous results on maximal solutions of \eqref{DP}:

\begin{Proposition}\label{square} There exists an Hamiltonian $\widetilde H$ satisfying \eqref{A1}, \eqref{A2}, \eqref{A3} plus
\begin{equation}\label{square1}
  \lim_{|p| \to + \infty} \frac {\widetilde H(x,p)}{|p|} = + \infty \qquad\hbox{for any $x \in \R^N$}
\end{equation}
such that  in addition the subsolutions of the equations \eqref{EP} and $\widetilde H[u]=0$ are the same, and
$u_\lambda$ is the maximal subsolution to $\la \, u + \widetilde H[u]=0$ for any $\la >0$.
\end{Proposition}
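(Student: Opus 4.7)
The plan is to modify $H$ only where $|p|$ is large, keeping $\widetilde H$ equal to $H$ on a region big enough to preserve every viscosity subsolution of \eqref{EP} and every maximal discounted solution $u_\lambda$. The cornerstone will be the uniform-in-$\lambda$ bound $\lambda u_\lambda\ge -H_0$ on $\R^N$, where
\[
H_0:=\sup_{x\in\R^N}H(x,0).
\]
Finiteness of $H_0$ follows from \eqref{A3} (which bounds $\max_{p\in B_\eps}H(x,p)$, hence $H(x,0)$, for $|x|$ large) combined with the continuity of $H$ on compacts; and $H_0\ge 0$, for otherwise the constant $0$ would be a subsolution of $H=H_0<0$, against $c=0$. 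Since the constant $-H_0/\lambda$ is then a subsolution of \eqref{DP}, maximality of $u_\lambda$ yields $u_\lambda\ge -H_0/\lambda$, and inserting this into the viscosity inequality $\lambda u_\lambda+H[u_\lambda]\le 0$ forces $H(x,Du_\lambda)\le H_0$ in the viscosity sense; by \eqref{A2} the gradients of $u_\lambda$ therefore lie in a set bounded independently of $\lambda$.

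Next, fix a continuous function $\rho:\R^N\to(0,\infty)$ with $\rho\ge\eps$ (the constant of \eqref{A3}) and with $\{p:H(x,p)\le H_0\}\subset B_{\rho(x)}$ for every $x$; existence is clear because the left-hand sup is upper semicontinuous in $x$ and locally bounded by coercivity, hence dominated by a continuous function. Define
\[
\widetilde H(x,p):=H(x,p)+\left(|p|-\rho(x)\right)_+^{\,2}.
\]
Then $\widetilde H$ is continuous on $\R^N\times\R^N$, convex in $p$ (as the sum of two convex functions of $p$), dominates $H$ everywhere, coincides with $H$ exactly on $\{|p|\le\rho(x)\}$, and grows at least like $|p|^2$ as $|p|\to\infty$, so \eqref{square1} holds. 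Coercivity is inherited from the inequality $\widetilde H\ge H$, and \eqref{A3} from $\widetilde H=H$ on $\R^N\times B_\eps$.

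The two coincidence statements then reduce to a single viscosity argument. If $u$ is a subsolution of \eqref{EP} and $u-\phi$ attains a local maximum at $x_0$, then $H(x_0,D\phi(x_0))\le 0\le H_0$, so $|D\phi(x_0)|\le\rho(x_0)$ and $\widetilde H(x_0,D\phi(x_0))=H(x_0,D\phi(x_0))\le 0$; the reverse implication is immediate from $\widetilde H\ge H$. The same argument applied to $u_\lambda-\phi$, invoking $H(x_0,D\phi(x_0))\le -\lambda u_\lambda(x_0)\le H_0$ from the first paragraph, shows that $u_\lambda$ is a subsolution of $\lambda u+\widetilde H[u]=0$; and since $\widetilde H\ge H$, every such subsolution is trapped below $u_\lambda$, which therefore is the maximal one. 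I expect the delicate step to be precisely the uniform lower bound on $\lambda u_\lambda$: it is the one place where \eqref{A3} enters in an essential way, through the finiteness of $H_0$, and everything else is standard viscosity bookkeeping.
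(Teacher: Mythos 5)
Your proof is correct and follows essentially the same route as the paper's: the uniform bound $\lambda u_\lambda \ge -b$ (with $b=\sup_{x}H(x,0)\ge 0$) obtained from the constant subsolution, followed by an additive squared positive-part penalty that vanishes on the region containing every viscosity test gradient of the subsolutions of \eqref{EP} and of the $u_\lambda$. The only difference is cosmetic: the paper penalizes via $\big(0\vee (H(x,p)-b)\big)^2$ rather than $\big(|p|-\rho(x)\big)_+^2$, which spares it the construction of the continuous radius function $\rho$ and makes the preservation of \eqref{A3} immediate.
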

\begin{proof}  We set
\[b = \max_{x \in \R^N} H(x,0) \geq 0,\]
 this maximum does exist in force of \eqref{A3}.  The function  $u \equiv - \frac b \lambda$  is subsolution to
 \eqref{DP} for any $\lambda >0$, so that $\lambda \, u_\lambda \geq - b$,   and accordingly
 \begin{equation}\label{square2}
  0 \geq  \lambda \, u_\lambda + H[u_\lambda] \geq - b + H[u_\lambda].
 \end{equation}
We  define
\[ \widetilde H(x,p) = H(x,p)  + \big ( 0 \vee  ( H(x,p) - b ) \big ) ^2\]
by exploiting  the property that the square of any coercive  nonnegative  convex function from $\R^N$ to $\R$ is convex with superquadratic
growth at infinity, we see that $\widetilde H$ satisfies  \eqref{A1}, \eqref{A2}, \eqref{square1}.

Regarding  property  \eqref{A3}, we have that
\[ \min_p H(x,p) \leq H(x,0) \leq b  \qquad\hbox{for any $x \in \R^N$}\]
which implies
\begin{equation}\label{square3}
\max_{x \in \R^N} \,  \min_{p \in \R^N} H(x,p) = \max_{x \in \R^N} \,  \min_{p \in \R^N} \widetilde  H(x,p)
\end{equation}
 Moreover
\[ \max_{p \in B_\eps} H(x,p) \leq 0 \leq b \qquad\hbox{when $|x|$ is large enough}\]
so that
\begin{equation}\label{square4}
\limsup_{|x| \to + \infty} \,  \max_{p \in B_\eps} H(x,p) = \limsup_{|x| \to + \infty} \,  \max_{p \in B_\eps} \widetilde  H(x,p)
\end{equation}
where $\eps$ is the same constant appearing in \eqref{A3}.
We deduce from  \eqref{square3}, \eqref{square4} that condition \eqref{A3} holds for $\widetilde H$.
Since $b \geq 0$, we have that
\[\{(x,p) \mid H(x,p) \leq 0\} = \{(x,p) \mid \widetilde  H(x,p) \leq 0\}.\]
This implies that $0$ is the critical value for $\widetilde H$ and the equations $H[u]=0$, $\widetilde H[u]=0$ have the
same subsolutions. Further, due to $\widetilde H \geq H$, any subsolution of $\lambda \, u + \widetilde H[u]$ is also subsolution  to \eqref{DP},
which implies that the maximal subsolution to $\lambda \, u + \widetilde H[u]=0$  is less than or equal to $u_\lambda$. On the other side,
 since by \eqref{square2}
\[ H(x, Du_\lambda(x)) =  \widetilde H(x, Du_\lambda(x)) \qquad\hbox{for a.e. $x$}\]
the function $u_\lambda$ itself  is subsolution to $\lambda \, u + \widetilde H[u]=0$. This implies that $u_\lambda$ is indeed
 the maximal subsolution to $\lambda \, u + \widetilde H[u]=0$, ending the proof.

\end{proof}

The above result allows us to assume, without any loss of generality, that  superlinear growth property in \eqref{square1} holds true for $H$.
We can therefore  define via Fenchel transform the corresponding Lagrangian
\[L(x,q)= \max_p   p \cdot q - H(x,p)\]
which is  convex and coercive in $q$.   In addition, we have for any $R>0$, $x \in B_R$
\[ L(x, q) \geq R \, |q| - H(x, R \, |q|^{-1} \, q) \geq R \, |q| - \sup_{(x,p)  \in B_R \times B_R} H(x,p)\]
which shows  that
\begin{equation}\label{fenchel1}
 \lim_{|q| \to + \infty} \, \inf_{x \in B_R} \frac{L(x,q)}{|q|} = + \infty \qquad\hbox{for any $R >0$.}
\end{equation}
We moreover deduce from \eqref{A3} that there is a compact subset $K \subset \R^N$ and  positive constants
 $\de_0$, $M_0$  such that
\begin{eqnarray}
 L(x,q) &\geq& \de_0 \, |q| - H(x, q \, |q|^{-1} \, \de_0) \geq \de_0 \, |q|  \label{fenchel2}\\
  L(x,q) &\geq & - H(x,0) \geq M_0 >0  \label{fenchel13}
\end{eqnarray}
for $x \not \in K$, any $q \in \R^N$.

\bigskip
\section{Ergodic equation}

\begin{Lemma}\label{lemc} The definition  of critical value  in \eqref{defc} is well posed,  the critical value  is finite and
is actually a minimum.
\end{Lemma}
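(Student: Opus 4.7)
The plan is to split the assertion into three separate claims: first, that the set in \eqref{defc} is non--empty, so $c<+\infty$; second, that this set is bounded from below, so $c>-\infty$; third, that the infimum is attained.

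For non--emptiness, I would exhibit $u\equiv 0$ as a subsolution of $H[u]=a$ for a suitable $a\in\R$. Set $b_0:=\max_{x\in\R^N}\min_{p\in\R^N}H(x,p)$, which is a finite real number by the very formulation of \eqref{A3}. Since $0\in B_\eps$, the limsup condition in \eqref{A3} gives $\limsup_{|x|\to+\infty}H(x,0)<+\infty$, and together with continuity of $H(\cdot,0)$ this yields $a:=\sup_{x\in\R^N}H(x,0)<+\infty$. The constant function $u\equiv 0$ is then a classical, hence viscosity, subsolution of $H[u]=a$, so $c\leq a<+\infty$.

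For the lower bound $c\geq b_0$, I would pick any viscosity subsolution $u$ of $H[u]=a$ and show $a\geq b_0$. By the coercivity in \eqref{A2} such a $u$ is locally Lipschitz, hence differentiable almost everywhere, and at differentiability points the subsolution condition reduces to $H(x,Du(x))\leq a$; in particular $\min_{p\in\R^N}H(x,p)\leq a$ almost everywhere. A short semicontinuity argument, based on the coercivity of $H(x,\cdot)$ together with the joint continuity of $H$, shows that $x\mapsto\min_{p\in\R^N}H(x,p)$ is continuous on $\R^N$, so the inequality propagates to every $x$, and taking the supremum yields $b_0\leq a$.

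For the attainment of the infimum, I would take a minimizing sequence $a_n\downarrow c$ with subsolutions $u_n$ of $H[u]=a_n$, normalized by $u_n(0)=0$. The bound $a_n\leq c+1$, combined with the coercivity in \eqref{A2}, makes $\{u_n\}$ locally equi--Lipschitz with a modulus depending only on the enclosing ball, hence independent of $n$; the normalization then makes it locally equibounded. Ascoli--Arzel\`a produces a subsequence converging locally uniformly to some $u$, and the standard stability theorem for viscosity inequalities shows that $u$ is a subsolution of $H[u]=c$, so the infimum is attained. The most delicate technical point I anticipate is the continuity of $x\mapsto\min_{p\in\R^N}H(x,p)$ needed in the lower--bound step; the remaining ingredients reduce to the well--known local Lipschitz regularity of viscosity subsolutions of coercive Hamilton--Jacobi equations.
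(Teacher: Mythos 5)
Your proof is correct and follows essentially the same route as the paper's: constant functions give $c<+\infty$ via \eqref{A3}, the pointwise inequality $\min_{p}H(x,p)\leq a$ satisfied by any subsolution of $H=a$ gives $c\geq \max_x\min_p H(x,p)>-\infty$, and stability of viscosity subsolutions under locally uniform convergence gives attainment. You merely supply the details (continuity of $x\mapsto\min_p H(x,p)$, the equi--Lipschitz/Ascoli--Arzel\`a compactness step) that the paper's terse proof leaves implicit.
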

\begin{proof} By  assumption  \eqref{A3}, $H(\cdot,0)$ attains a  maximum in $\R^N$.
If $a \geq \max_{\R^N} H(x,0)$, then $H[u]=a$ admits any constant function as subsolutions.
This implies that the set in the right hand side of \eqref{defc} is nonempty.  On the other side,
if  $a < \min_p H(x,p)$ for some  $x\in \R^N$, then $H[u]=a$ does not admit any subsolution,
which shows that  the critical value  is finite. Finally it is a minimum by  standard stability properties of viscosity subsolutions.
\end{proof}

We recall that we assume throughout the paper that the critical value is $0$.

\smallskip
\begin{Proposition}\label{amarilho} There exists a solution to
\[H[u]=0  \qquad\hbox{in $\R^N$.}\]
 \end{Proposition}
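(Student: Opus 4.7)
My plan is to obtain a critical solution as a vanishing-discount limit of the maximal discounted solutions $u_\lambda$, leveraging the compactness already established for this family.

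First I would invoke Proposition \ref{propo}, which states that $\{u_\lambda\}_{\lambda>0}$ is relatively compact in $C(\R^N)$ endowed with the topology of local uniform convergence. I would then pick a sequence $\lambda_n \to 0^+$ and, passing to a subsequence if necessary, extract a locally uniform limit $u \in C(\R^N)$, i.e., $u_{\lambda_n} \to u$ locally uniformly in $\R^N$.

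Next I would pass to the limit in the equation \eqref{DP} (with $c=0$): each $u_{\lambda_n}$ satisfies in the viscosity sense
\[
\lambda_n\, u_{\lambda_n} + H(x, Du_{\lambda_n}) = 0 \qquad \text{in } \R^N.
\]
Proposition \ref{prop4-2} and Lemma \ref{lem4-2} provide uniform local bounds for $u_\lambda$, so on every ball $B_R$ the family $\{u_{\lambda_n}\}$ is bounded by some $M_R$ independent of $n$. Consequently $\lambda_n\, u_{\lambda_n} \to 0$ locally uniformly on $\R^N$. The standard stability theorem for viscosity solutions under locally uniform convergence of both the solutions and the zeroth-order term then yields that the limit $u$ is a viscosity solution of $H[u]=0$ in $\R^N$, which is the desired assertion.

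There is no serious obstacle: the compactness of $\{u_\lambda\}$ does the heavy lifting, and the rest is routine viscosity stability. The only point that requires a bit of care is ensuring that $\lambda_n u_{\lambda_n}$ really tends to $0$ locally uniformly, but this is immediate from the local equiboundedness supplied by Lemma \ref{lem4-2} and Proposition \ref{prop4-2}. Note that a direct Perron construction on $\R^N$ would be more delicate, because one lacks an \emph{a priori} global supersolution to serve as an upper barrier; routing through the discount approximations bypasses this issue entirely.
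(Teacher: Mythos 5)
Your proof is correct, but it follows a genuinely different route from the paper's. The paper argues intrinsically at the critical level: since a subsolution of \eqref{EP} exists, the distance $S_0$ is finite, $S_0(\cdot,y)$ is a solution off $y$ for every $y$, and the covering argument of Fathi--Siconolfi together with Proposition \ref{prescorcia} produces a point $y$ at which the supersolution property survives, so that $S_0(\cdot,y)$ is a global solution. You instead take a subsequential vanishing-discount limit of the maximal solutions $u_\lambda$, using Theorem \ref{thm1-4}, the local equiboundedness of Proposition \ref{prop4-2} and Lemma \ref{lem4-2}, the relative compactness of Proposition \ref{propo}, and standard viscosity stability; since all of those results live in Section \ref{DDPP} and none of them invokes Proposition \ref{amarilho}, there is no circularity, and the passage to the limit (with $\lambda_n u_{\lambda_n}\to 0$ locally uniformly) is routine. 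The trade-off is this: your argument is shorter given the discounted machinery already in place, but it only yields \emph{some} solution with no further structure, whereas the paper's construction exhibits a solution of the specific form $S_0(\cdot,y)$, and it is precisely this argument that is reused in Proposition \ref{lui!} to conclude that the Aubry set is nonempty. So if one adopted your proof, the $S_0$-based covering argument would still be needed a few lines later; the paper's choice avoids that duplication.
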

 \begin{proof} As already pointed out,  there exists a subsolution to $H[u]=0$ in $\R^N$.  This implies that the
 intrinsic distance  $S_0$ is finite. We use the usual covering argument, see \cite[Theorem 3.3]{FaSi2005}  plus existence of subsolution
  going to $- \infty$ and
 Proposition \ref{prescorcia} to show there exists
  $y \in \R^N$ such that $S_0(\cdot,y)$ is a solution  to $H[u]=0$ in $\R^N$.
 \end{proof}

\medskip

 \begin{Proposition}\label{lui!} The Aubry set $\A$ is a nonempty compact subset of $\R^N$.
 \end{Proposition}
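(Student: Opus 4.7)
The plan is to prove the three claims — nonemptiness, closedness, and boundedness — separately, using for each the appropriate tool already developed in the excerpt.

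Nonemptiness follows at once from Proposition \ref{amarilho}: the point $y$ produced there has $S_0(\cdot,y)$ a global critical solution, which is the standard criterion for $y \in \mathcal{A}$ (see Appendix \ref{KAM}). Closedness is a routine stability argument: if $y_n \in \mathcal{A}$ and $y_n \to y$, the functions $S_0(\cdot, y_n)$ are equi-Lipschitz on compact sets (by the coercivity \eqref{A2} applied to the critical inequality $H[S_0(\cdot, y_n)] \leq 0$), hence converge locally uniformly to $S_0(\cdot, y)$, and this limit is still a critical solution by viscosity stability. So $y \in \mathcal A$, and $\mathcal A$ is closed.

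The substantive step is boundedness, and here \eqref{A3} is used essentially. First I would upgrade \eqref{A3} to \eqref{A3'} via the elementary inequality $c \geq \max_{x \in \R^N} \min_{p \in \R^N} H(x,p)$, valid because a critical subsolution $u$ satisfies $\min_p H(x,Du(x)) \leq 0$ at each differentiability point and this extends by continuity. Since $c = 0$, this yields $R > 0$ and $\delta_0 > 0$ with
\[
H(x,0) \leq \max_{p \in B_\eps} H(x,p) \leq -\delta_0 \qquad \text{for } |x| \geq R.
\]
Next, starting from any critical subsolution (existing by Lemma \ref{lemc}), apply Proposition \ref{prescorcia} with $a = 0$ and $K = \overline{B_R}$ to produce a critical subsolution $w$ that is identically constant outside some compact $C \supset \overline{B_R}$. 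On the open set $\R^N \setminus C$, $w$ is constant, so
\[
H(x, Dw(x)) = H(x,0) \leq -\delta_0 < 0,
\]
meaning $w$ is a strict classical subsolution there. By the characterization in Appendix \ref{KAM} of $\mathcal{A}^c$ as the union, over all critical subsolutions $u$, of the open sets on which $u$ is strict, every $y \notin C$ satisfies $y \notin \mathcal{A}$; therefore $\mathcal{A} \subset C$ and so $\mathcal A$ is bounded.

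The main obstacle is conceptual rather than computational: it lies in the final step, where one must invoke the correct characterization of $\mathcal{A}^c$ via the existence of a locally strict critical subsolution. The analytic content — namely the strict gap $\delta_0 > 0$ at infinity produced by \eqref{A3}, together with the localization/truncation machinery of Proposition \ref{prescorcia} — is already entirely in hand, and assembling it in the right order gives the full compactness of $\mathcal{A}$.
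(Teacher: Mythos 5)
Your proof is correct and follows essentially the same route as the paper's: nonemptiness via Proposition \ref{amarilho}, closedness by viscosity stability, and boundedness by combining Proposition \ref{prescorcia} with the characterization of the complement of $\A$ given in Proposition \ref{lui!!}. The only difference is that you make explicit, through the passage from \eqref{A3} to \eqref{A3'}, why the truncated subsolution is \emph{strict} outside a compact set — a point the paper's proof leaves implicit.
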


 \begin{proof} The argument of  Proposition \ref{amarilho} shows that $\A$ is nonempty, it is in addition  closed by stability properties
 of viscosity solutions. By Proposition \ref{prescorcia}, there exists a subsolution of $H[u]=0$ which is strict outside a compact subset
  $C_0 \subset \R^N$. This implies by Proposition \ref{lui!!} that $\A \subset C_0$.

 \end{proof}

 \medskip

We recall that if the ambient space is compact the ergodic equation admits solutions only at the critical level. This is not any more the case in the noncompact setting since a solutions can be found at any supercritical value as well.

\begin{Definition} We say that a solution $v$ to the critical equation is a weak KAM solution if it can be written in the form
\begin{equation}\label{defwk}
 v(x)= \min\{ v(y) + S_0(y,x) \mid y \in \A\}.
\end{equation}
\end{Definition}

We directly derive from the  definition of intrinsic distance:

\begin{Lemma}\label{enric} A function  $v$ is weak KAM solution if and only if
\[v(x) = \max \{ u(x) \mid u \;\hbox{subsolution to \eqref{EP} with $u=v$ on $\A$}\}\]
\end{Lemma}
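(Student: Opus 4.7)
The plan is to introduce the candidate function
\[
\tilde v(x):=\min\{v(y)+S_0(y,x)\mid y\in\A\}
\]
(the minimum exists because $\A$ is compact by Proposition~\ref{lui!} and $v$ is continuous), and show that $\tilde v$ is simultaneously the maximal subsolution of \eqref{EP} agreeing with $v$ on $\A$ and the defining expression of the weak KAM property. The two conditions are then equivalent to the single identity $v=\tilde v$.

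First I would check the ``upper bound'' half: for any subsolution $u$ of \eqref{EP} with $u=v$ on $\A$, the standard characterization of the intrinsic (semi)distance $S_0$ recalled in Appendix~\ref{KAM} gives
\[
u(x)-u(y)\le S_0(y,x)\qquad\text{for all }x\in\R^N,\ y\in\R^N,
\]
and in particular for $y\in\A$ one has $u(x)\le v(y)+S_0(y,x)$. Taking the minimum over $y\in\A$ yields $u\le\tilde v$ on $\R^N$. Applied to $u=v$ (which is trivially admissible), this shows $v\le\tilde v$ on $\R^N$ and in particular $v\le\tilde v$ on $\A$.

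Next I would verify that $\tilde v$ itself belongs to the admissible family, i.e.\ it is a subsolution of \eqref{EP} with $\tilde v=v$ on $\A$. Each function $x\mapsto v(y)+S_0(y,x)$ is a subsolution of \eqref{EP}, because $S_0(y,\cdot)$ is; the infimum (in fact minimum over a compact set) of a family of subsolutions is again a subsolution by standard viscosity stability. For the boundary identification, on $\A$ the choice $y=x$ gives $\tilde v(x)\le v(x)$, while the reverse inequality was established in the previous step; hence $\tilde v=v$ on $\A$.

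Combining the two steps, $\tilde v$ is the maximum in the set on the right-hand side of the claimed formula. The equivalence then follows at once: $v$ is a weak KAM solution, i.e.\ $v=\tilde v$ on $\R^N$, if and only if $v$ equals that maximum. I do not foresee a serious obstacle; the only point requiring some care is the infimal stability statement (which needs $\tilde v$ to be finite and upper semicontinuous, both automatic here since $\A$ is compact and $S_0$ is continuous on $\R^N\times\R^N$ under our hypotheses), and the availability of the triangle-type inequality $u(x)-u(y)\le S_0(y,x)$ for subsolutions, which is a basic property of the Aubry–Mañé framework summarized in Appendix~\ref{KAM}.
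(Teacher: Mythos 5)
Your argument is correct and is essentially the proof the paper has in mind (the paper states that the lemma follows directly from the definition of the intrinsic distance and gives no details): the inequality $u(x)-u(y)\le S_0(y,x)$ for subsolutions, the stability of the subsolution property under minima, and the identification $\tilde v=v$ on $\A$ together exhibit $\tilde v$ as the maximal admissible subsolution, so both conditions reduce to $v=\tilde v$. The one point you should make explicit is that a weak KAM solution is by definition a \emph{solution} of \eqref{EP}, not merely a function satisfying the representation formula; this is automatic once $v=\tilde v$, since for $y\in\A$ each $v(y)+S_0(y,\cdot)$ is a full solution by the very definition of the Aubry set, and the minimum over the compact set $\A$ of such solutions is again a solution.
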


\smallskip

We recall the following result, see for the proof  \cite{HI2008}, \cite{FaSi2005}

\begin{Lemma}\label{predebole} Let $B_0$ be a ball containing $\A$ than any solution $v$ of \eqref{EP} in $B_0$ satisfies
\[ v(x)= \min \{ v(y) + S_0(y,x) \mid y \in \partial B_0 \cup \A\}.\]
\end{Lemma}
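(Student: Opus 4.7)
The plan is to establish the identity by proving two opposing inequalities. Set
\[
w(x):=\min\{v(y)+S_0(y,x)\mid y\in\partial B_0\cup\A\},
\]
where the minimum is attained because $\partial B_0\cup\A$ is compact (by Proposition \ref{lui!}) and the integrand is continuous in $y$. The starting observation is that, for each fixed $y$, the function $x\mapsto v(y)+S_0(y,x)$ is a subsolution of \eqref{EP} on the whole of $\R^N$ by the very definition of the intrinsic distance $S_0$; hence $w$, being an infimum of such subsolutions, is itself a subsolution of \eqref{EP} on $\R^N$.

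For the inequality $w\le v$ in $B_0$, I would first observe that $w\le v$ on $\partial B_0\cup\A$, via the elementary bound $w(y)\le v(y)+S_0(y,y)\le v(y)$. Then, on the open set $B_0\setminus\A$, $w$ is a subsolution and $v$ a solution of \eqref{EP}, with the boundary inequality $w\le v$ on $\partial(B_0\setminus\A)\subseteq\partial B_0\cup\A$. I would now apply the comparison principle available in $B_0\setminus\A$: this is not the standard comparison for $H[u]=0$, which fails at the critical level because of nonuniqueness, but the refined version valid whenever a strict subsolution exists inside the domain; such a strict subsolution does exist on $B_0\setminus\A$ by the defining property of the Aubry set together with Proposition \ref{prescorcia}. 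The conclusion is $w\le v$ in $B_0\setminus\A$, hence in all of $B_0$.

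For the reverse inequality $v\le w$ in $B_0$, I would rely on the dynamic programming / calibrated-curve representation of the solution $v$ on $B_0$. Fix $x\in B_0$; for every $\delta>0$, construct an almost-minimizing absolutely continuous backward trajectory $\gamma:[-T,0]\to\overline{B_0}$ with $\gamma(0)=x$, satisfying
\[
v(x)\ge v(\gamma(-T))+\int_{-T}^0 L(\gamma(s),\dot\gamma(s))\,ds-\delta,
\]
and ending either on $\partial B_0$ or, if it remains in $B_0$ for arbitrarily long backward time, accumulating on $\A$. Since $\int_{-T}^0 L\,ds\ge S_0(\gamma(-T),x)$ by definition of $S_0$, a continuity-and-compactness argument on $\partial B_0\cup\A$ yields some $y_*\in\partial B_0\cup\A$ with $v(x)\ge v(y_*)+S_0(y_*,x)\ge w(x)$.

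The main obstacle will be this last step: showing that almost-minimizing backward trajectories are well-behaved in the noncompact setting, in the sense that they either exit $B_0$ (so one can pass to a limit point on the compact set $\partial B_0$) or else remain in $B_0$ and accumulate on $\A$. The accumulation on the Aubry set is a fundamental property of calibrated curves, and here it depends on the superlinear growth of $L$ (Proposition \ref{square}, estimate \eqref{fenchel1}) to secure equi-Lipschitz control and extract compactness of the trajectories, together with the compactness of $\A$ from Proposition \ref{lui!}. A secondary technicality is the comparison step of the first paragraph: $\partial(B_0\setminus\A)$ is not a priori regular and comparison at the critical value is delicate, so one must use the strengthened comparison valid on open sets admitting a strict subsolution, proved by a convex perturbation of the strict subsolution coupled with a standard doubling-of-variables argument.
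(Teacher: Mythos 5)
The paper does not actually prove this lemma; it refers the reader to \cite{HI2008} and \cite{FaSi2005}. Judged on its own terms, your argument has a genuine logical gap: both halves establish the \emph{same} inequality. Writing $w(x)=\min\{v(y)+S_0(y,x)\mid y\in\partial B_0\cup\A\}$, your first paragraph (comparison of the subsolution $w$ against the supersolution $v$ on $B_0\setminus\A$) yields $w\le v$. Your second paragraph, although announced as the proof of ``$v\le w$'', ends with $v(x)\ge v(y_*)+S_0(y_*,x)\ge w(x)$, which is again $w\le v$. The inequality $v\le w$, i.e.\ $v(x)\le v(y)+S_0(y,x)$ for \emph{every} $y\in\partial B_0\cup\A$, is never addressed; nor can it be obtained by running your comparison in the opposite direction, because on $\partial B_0$ the boundary ordering you actually have is $w\le v$, not $v\le w$.

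Moreover, the missing direction is exactly where the statement is delicate. Since $S_0$ is the global intrinsic distance, a near-optimal curve from $y\in\partial B_0$ to $x\in B_0$ may leave $\overline{B_0}$, while the subsolution property of $v$ \emph{in $B_0$ only} gives $v(x)-v(y)\le\ell_0(\xi)$ merely for curves $\xi$ contained in $\overline{B_0}$; this bounds $v(x)-v(y)$ by the distance localized to $\overline{B_0}$, which is in general larger than $S_0(y,x)$. When $v$ is the restriction of a solution on all of $\R^N$ --- the only situation in which the lemma is invoked, namely in the proof of Theorem \ref{superKAM} --- the inequality $v(x)-v(y)\le S_0(y,x)$ is immediate from the definition of $S_0$ as a supremum over global subsolutions; for a solution defined only in $B_0$ you must either supply a separate argument or work with the localized distance. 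Your second paragraph (backward almost-minimizing curves stopped at $\partial B_0$ or accumulating on $\A$) is a reasonable sketch of the direction $v\ge w$, and it makes your first paragraph redundant, but as written the proof does not establish the lemma.
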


The following characterization holds:

\begin{Theorem}\label{superKAM} A solution $v$ is weak KAM if and only if it is bounded from below.
\end{Theorem}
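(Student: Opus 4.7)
The plan is to prove both implications by exploiting the duality characterization of the intrinsic semi-distance $S_0(y,x)=\sup\{u(x)-u(y):u\text{ a subsolution of \eqref{EP}}\}$, using two kinds of subsolutions furnished by Proposition \ref{prescorcia}: a globally bounded one and one decaying linearly to $-\infty$ at infinity.

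For the forward implication, I would fix a bounded subsolution $u_0$ of \eqref{EP} (obtained from Proposition \ref{prescorcia}); the duality inequality immediately gives
\[
S_0(y,x)\geq u_0(x)-u_0(y)\geq -2\|u_0\|_\infty\quad\text{for every }y,x\in\R^N.
\]
Since $\A$ is compact by Proposition \ref{lui!} and $v$ is locally Lipschitz, the restriction $v|_\A$ is bounded below by some constant $-c_1$; plugging this into \eqref{defwk} yields $v(x)\geq -c_1-2\|u_0\|_\infty$ for every $x$, so $v$ is bounded below.

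For the backward implication, let $v$ be a solution of \eqref{EP} with $v\geq -M$. Given $x\in\R^N$, I apply Lemma \ref{predebole} on $B_0=B_R$ for $R$ large enough that $\A\cup\{x\}\subset B_R$, obtaining
\[
v(x)=\min\{v(y)+S_0(y,x):y\in\partial B_R\cup\A\}.
\]
The strategy is to show that, for $R$ large, this minimum cannot be realized on $\partial B_R$; the weak KAM formula then follows. The key ingredient is the estimate $\min_{|y|=R}S_0(y,x)\to+\infty$ as $R\to\infty$. To prove it I would revisit the construction in the proof of Proposition \ref{prescorcia}: with $\phi(x)=-(\eps/2)|x|$ and a suitable constant $b>0$, the function $w=\min\{\phi+b,u_0\}$ is a subsolution of \eqref{EP} (indeed, \eqref{A3} ensures $\phi$ is a subsolution outside a compact set, and the infimum with $u_0$ preserves the subsolution property everywhere), satisfying $w(y)\leq -(\eps/2)|y|+b$ for $|y|$ large. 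Duality then yields
\[
S_0(y,x)\geq w(x)-w(y)\geq w(x)+\tfrac{\eps}{2}|y|-b,
\]
which diverges as $|y|\to\infty$.

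With this estimate in hand, for $y\in\partial B_R$ one has $v(y)+S_0(y,x)\geq -M+w(x)+\tfrac{\eps}{2}R-b\to+\infty$ as $R\to\infty$, while for $y\in\A$ the quantity $v(y)+S_0(y,x)$ is bounded in $y$ thanks to compactness of $\A$ and continuity of $v$ and $S_0(\cdot,x)$. Choosing $R$ sufficiently large rules out that the minimum be achieved on $\partial B_R$, and therefore $v(x)=\min_{y\in\A}(v(y)+S_0(y,x))$, i.e., $v$ is weak KAM. The main obstacle is the linear growth estimate for $S_0$ at infinity; once this is reduced to exhibiting a subsolution of \eqref{EP} that decays linearly at infinity, the rest of the argument is a straightforward pigeonholing of the minimum in Lemma \ref{predebole}.
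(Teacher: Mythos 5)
Your proof is correct, but it takes a genuinely different route from the paper's on both implications. For ``weak KAM $\Rightarrow$ bounded below'', the paper picks $y_0\in\A$ realizing the minimum in \eqref{defwk}, follows a near-optimal curve from $y_0$ to $x_1$ and uses its last exit time from a large ball plus approximate triangle inequalities to get $v(x_1)\geq \min_{\partial B_R}v-2$; you instead note that any bounded subsolution $u_0$ (supplied by Proposition \ref{prescorcia}) gives the uniform bound $S_0(y,x)\geq u_0(x)-u_0(y)\geq -2\|u_0\|_\infty$, which together with compactness of $\A$ finishes in two lines. For the converse the paper argues by contraposition: if $v$ is not weak KAM, Lemma \ref{predebole} on expanding balls produces $x_n$ with $|x_n|\to\infty$ and $v(x_0)=v(x_n)+S_0(x_n,x_0)$, and the length estimate $\ell_0(\xi)\geq\eps\,\ell(\xi)$ outside $B_R$ (the inf-over-curves side of $S_0$) forces $v(x_n)\to-\infty$. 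You prove the implication directly: the linearly decaying subsolution $w=\min\{\phi+b,u_0\}$ from the proof of Proposition \ref{prescorcia} yields, via the sup-over-subsolutions side of $S_0$, the lower bound $S_0(y,x)\geq w(x)+\tfrac{\eps}{2}|y|-b$, so the boundary competitors in Lemma \ref{predebole} are eliminated once $R$ is large. Both arguments rest on the same geometric fact---under \eqref{A3} the intrinsic distance grows at least linearly at infinity---but your dual formulation bypasses the curve manipulations, the entrance/exit times and the estimates \eqref{eiko1}--\eqref{eiko2}, and is noticeably shorter; the paper's version has the minor side benefit of exhibiting the explicit linear rate at which a non--weak-KAM solution tends to $-\infty$ along a sequence.
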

\begin{proof}
 Exploiting \eqref{A3},  we can find $\eps > 0$ and $R >0$   with
\[ \inf_{x \in \R^N \setminus B_R} \, \max_{p \in B_\eps} H(x,p) < 0,\]
we can further assume that $B_R \supset \A$. We consequently have
\begin{equation}\label{eiko0}
  \ell_0(\xi) \geq \eps \, \ell(\xi)
\end{equation}
for any curve $\xi$ with support contained in $\R^N \setminus B_R$. Here, to repeat,  $\ell$ stands for the Euclidean length,
 and $\ell_0$ for the intrinsic length of a curve.
 Given $x_0 \in B_R$,  we define
 \begin{equation}\label{eiko90}
   m= \min_{x \in \overline{ B_R}} S_0(x,x_0).
 \end{equation}
Assume that $v$ is not a weak KAM solution,   then  by applying Lemma \ref{predebole} to a sequence of balls with diverging radii, we find that
there exist  $x_n$ with $|x_n| \to + \infty$ such that
\begin{equation}\label{eiko001}
v(x_0)= v(x_n) + S_0(x_n,x_0).
\end{equation}
We may assume that $|x_n| > R$ for any $n$. Let $\xi_n$ be  a sequence of curves, parametrized in $[0,1]$, linking $x_n$ to $x_0$ such that
\begin{equation}\label{eiko00}
 \ell_0(\xi_n) \leq S_0(x_n,x_0) + \frac 1n.
\end{equation}
Let $t_n$ be the first entrance time of $\xi_n$ in $B_R$. This means
\[ \xi_n([0,t_n)) \cap B_R = \emptyset \quad\hbox{and} \quad  y_n:= \xi_n(t_n) \in \partial B_R.\]  We claim that
\begin{eqnarray}
 \lim_n \ell_0(\overline\xi_n) - S_0(x_n,y_n)  &=& 0  \label{eiko1}\\
\lim_n  S_0(x_n,x_0)- S_0(x_n, y_n)- S_0(y_n,x_0)&=& 0  \label{eiko2}
\end{eqnarray}
where  $\overline{\xi}_n = {\xi_n}_{ \big | ([0,t_n)}$. We in fact have by \eqref{eiko00} and the triangle inequality
\begin{eqnarray*}
 \ell_0(\xi_n) &\leq& S_0(x_n,x_0) + \frac 1n \leq S_0(x_n,y_n) + S_0(y_n,x_0) + \frac 1n  \\ &\leq&
 \ell_0 ({\xi_n}_{ \big | [t_n,1]}) +  S_0(x_n,y_n) + \frac 1n
  \leq \ell_0(\overline \xi_n) + \ell_0 ({\xi_n}_{ \big | [t_n,1]}) + \frac 1n \\ &=& \ell_0(\xi_n) + \frac 1n
\end{eqnarray*}
which implies
\begin{eqnarray*}
0 &\leq& \ell_0(\overline\xi_n) -  S_0(x_n,y_n) \leq \frac 1n \\
 0 &\leq& S_0(x_n,y_n) + S_0(y_n, x_0) - S_0(x_n,x_0) \leq \frac 1n
\end{eqnarray*}
and gives in the end  \eqref{eiko1}, \eqref{eiko2} sending $n$ to $+ \infty$.
We further have by  \eqref{eiko001}, \eqref{eiko0}, \eqref{eiko1}, \eqref{eiko2} that
\begin{eqnarray*}
  v(x_0) &=& v(x_n) + S_0(x_n,x_0)
   \geq v(x_n) + S_0(x_n,y_n)+ S_0(y_n,x_0) - \frac 1n \\ &\geq& v(x_n) + \ell_0(\overline \xi_n)  + S_0(y_n,x_0) - 2\, \frac 1n\\
   &\geq &  v(x_n) + \eps \, (|x_n| - R)  +m - 2 \, \frac 1n,
\end{eqnarray*}
where $m$ is defined as in \eqref{eiko90}, and  we finally obtain
\[v(x_n) \leq v(x_0) - m - \eps \, (|x_n| - R)  +  2 \, \frac 1n.\]
Since $|R|$ can be sent to infinity, this proves that $u$ is unbounded from below. Conversely, let $v$ be a weak KAM solution.
Let $x_1$ be a point with $|x_1| > R$. We denote by $y_0$ an element of the Aubry set  with
\[v(x_1)= v(y_0) + S_0(y_0,x_0)\]
and by  $\xi$ a curve, parametrized in $[0,1]$, linking $y_0$ to $x_1$ such that
\begin{equation}\label{eikko2}
 \ell_0(\xi) \leq S_0(y_0,x_1) + 1.
\end{equation}
Let $t_0$ be the last exit  time of $\xi$ from $B_R$. This means
\[ \xi((t_0,1)) \cap B_R = \emptyset \quad\hbox{and} \quad  z_0:= \xi(t_0) \in \partial B_R.\]
We  have by \eqref{eikko2} and the triangle inequality
\begin{eqnarray*}
 \ell_0(\xi) &\leq&  S_0(y_0,x_1) + 1 \leq S_0(y_0,z_0) + S_0(z_0,x_1) + 1 \\ &\leq& \ell ({\xi}_{ \big | [0,t_0]}) +  S_0(z_0,x_1) + 1\\
   &\leq& \ell_0 ({\xi}_{ \big | (t_0,1]}) + \ell_0 ({\xi}_{ \big | [0,t_0]}) + 1 \\  &=& \ell_0(\xi) + 1
\end{eqnarray*}
which implies
\begin{eqnarray}
0 &\leq& \ell_0(\xi_{ \big | [t_0,1]}) -  S_0(z_0,x_1) \leq 1 \label{eikko3} \\
 0 &\leq& S_0(y_0, z_0) + S_0(z_0, x_1) - S_0(y_0,x_1) \leq 1 \label{eikko4}.
\end{eqnarray}
We further have by \eqref{eiko001}, \eqref{eikko3}, \eqref{eikko4}  and the fact that $v$ is a weak KAM solution
\begin{eqnarray*}
  v(x_1) &=& v(y_0) + S_0(y_0,x_1)
   \geq v(y_0) + S_0(y_0,z_0)+ S_0(z_0,x_1) - 1  \\ &\geq& v(y_0) + \ell_0(\xi_{ \big | [t_0,1]})  + S_0(y_0,z_0) - 2\\
   &\geq &  v(z_0)    - 2 \geq  \min_{\partial B_R} v - 2
\end{eqnarray*}
which gives that $v$ is bounded from below since $x_1$ has been arbitrarily chosen  in $\R^N \setminus B_{R}$.
\end{proof}

\bigskip

\section{Localization results}\label{locale}

The results of this section will be crucial to prove that some subsets of the space of bounded functions
$\Phi: \R^N \times  \R^N$ are open in a suitable topology. This in turn will allow showing that the
existence of minimizing measures for \eqref{EP}, \eqref{DP}.

\smallskip

 \begin{Proposition}\label{palle} Any  open ball $B_0 \supset \A$ satisfies
 \[ 0= \inf \{ a \mid H=a \;\hbox{admits  subsolutions in $B_0$}\}\]
 \end{Proposition}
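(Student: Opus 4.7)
My plan is to prove the two inequalities separately, the first by restriction and the second via the dynamic-programming characterization of subsolutions applied at a point of $\A$. The bound $\inf\{a\mid H=a \text{ admits subsolutions in }B_0\}\leq 0$ is immediate: a critical solution on $\R^N$, existing by Proposition~\ref{amarilho}, restricts to a subsolution of $H=0$ on $B_0$, so the infimum is $\leq 0$.

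For the reverse inequality I would use the fact that, $H$ being convex and coercive in $p$, a continuous $w$ is a subsolution of $H=a$ on $B_0$ if and only if
\[
w(\gamma(t))-w(\gamma(0))\leq \int_0^t\bigl[L(\gamma(s),\dot\gamma(s))+a\bigr]\,ds
\]
holds along every Lipschitz curve $\gamma:[0,t]\to B_0$. Choosing $y_0\in\A$ (nonempty by Proposition~\ref{lui!} and contained in $B_0$ by hypothesis) and inserting the constant curve $\gamma\equiv y_0$ collapses this inequality to $0\leq t[L(y_0,0)+a]$ for every $t>0$, which forces $a\geq -L(y_0,0)$ for every $a$ admitting a subsolution of $H=a$ on $B_0$.

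The proof is then finished by the auxiliary identity $L(y_0,0)=0$ for each $y_0\in\A$, or equivalently $\min_{p\in\R^N}H(y_0,p)=0$. The inequality $\min_p H(y,p)\leq 0$ at every $y$ follows by applying Rademacher's theorem to any critical subsolution $u$ (from Proposition~\ref{amarilho}): at a.e.\ differentiability point $\min_p H(y,p)\leq H(y,Du(y))\leq 0$, and the inequality persists everywhere by continuity of $y\mapsto \min_p H(y,p)$, ensured by coercivity. The reverse inequality at Aubry points is the subtle half: if $H(y_0,p_0)<0$ for some $p_0$, continuity produces a neighborhood of $y_0$ on which the affine function $x\mapsto p_0\cdot x$ is a strict subsolution of $H=0$, and gluing this with the bounded critical subsolution provided by Proposition~\ref{prescorcia} (which is automatically strict outside a compact set by \eqref{A3}) yields a global subsolution of $H=0$ strict both near $y_0$ and outside a large compact, so Proposition~\ref{lui!!} would expel $y_0$ from $\A$.

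The main obstacle is precisely this identification $\min_p H(y_0,p)=0$ at Aubry points, because the gluing that converts the local strict subsolution based on the affine function into a global subsolution of $H=0$ still strict near $y_0$ requires the usual max/min patching, with care to preserve strictness across the transition. This is classical weak KAM material and I would invoke it from the appendix on the Aubry set; once granted, Proposition~\ref{palle} reduces to the one-line application of the dynamic-programming inequality to the constant curve at any $y_0\in\A$.
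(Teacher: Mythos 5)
The first half of your argument (the inequality $\le 0$ by restriction) is fine, and so is the reduction of the reverse inequality, via the constant curve at $y_0\in\A$, to the identity $L(y_0,0)=0$. The gap is that this identity is false in general: the Aubry set need not consist of equilibrium points, and under \eqref{A1}--\eqref{A3} one can have $\min_p H(y,p)<0$, i.e.\ $L(y,0)>0$, at \emph{every} $y\in\A$. Concretely, in $\R^2$ take $L(x,q)=\tfrac12|q-W(x)|^2+f(x)$, so $H(x,p)=\tfrac12|p|^2+p\cdot W(x)-f(x)$, where $W$ is a continuous vector field supported in $\{3/2<|x|<5/2\}$ restricting to the unit tangent field on the circle $\{|x|=2\}$, and $f\ge 0$ is continuous, vanishes exactly on that circle, equals $1$ outside a large ball, and satisfies $\tfrac12|W|^2+f\ge\tfrac12$ everywhere. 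The constant $0$ is a subsolution of $H=0$; your own curve inequality applied to the cycle $\xi$ traversing the circle with $\dot\xi=W(\xi)$ gives $0\le\int_0^T[L(\xi,\dot\xi)+a]\,dt=aT$, so $c=0$; that same cycle has $\ell_0=0$ and Euclidean length $4\pi$, so the circle lies in $\A$ by Proposition \ref{basta}; and \eqref{A3} holds because $\max_x\min_pH=-\tfrac12$ while $\max_{p\in B_\eps}H(x,p)\to\tfrac12\eps^2-1$ as $|x|\to\infty$. Yet $L(y,0)=\tfrac12$ on the circle, so your constant-curve bound only yields $a\ge-\tfrac12$, not $a\ge 0$.

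Correspondingly, the gluing you flag as the main obstacle cannot be completed, and this is not a technicality: the affine function $p_0\cdot x$ is a strict subsolution only on a small neighborhood $V$ of $y_0$, and a $\min$/$\max$ patch with a global subsolution remains strict near $y_0$ only if it selects the affine piece there and hands over to the global subsolution while still inside $V$; nothing forces the two graphs to cross inside $V$, and Proposition \ref{lui!!} shows that for $y_0\in\A$ no such patch can exist. The content of Proposition \ref{palle} is genuinely a statement about cycles rather than a pointwise statement at Aubry points: the paper argues by contradiction, assuming a strict subsolution in $B_0$, solving the Dirichlet problem in $B_0$ with boundary datum $S_0(z,\cdot)$ for a fixed $z\in\A$, and using the representation of the solution to produce $y_0\in\partial B_0$ with $S_0(z,y_0)+S_0(y_0,z)=0$; this yields cycles through $z$ and $y_0$ with vanishing intrinsic length and non-vanishing Euclidean length, so $y_0\in\A\cap\partial B_0$, contradicting $\A\subset B_0$. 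Some global argument of this type, exploiting curves that leave and re-enter $\A$, is unavoidable.
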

 \begin{proof}  Let $B_0$ be an open  ball containing the Aubry set. Assume by contradiction  that there is a
 strict subsolution to $H[u]=0$ in $B_0$. Then by standard comparison principles,  there exists one and only one solution $u_0$
 to $H[u]=0$   equal to a given trace  $g$ on $\partial B_0$ with
 \[g(x) - g(y) \leq S_0(y,x) \qquad\hbox{for any $x$, $y$ in $\partial B_0$}\]
 and it is given by
 \[u_0(x)= \min \{ g(y) + S_0(y,x) \mid y \in \partial B_0\}.\]
 We fix $z \in \A \subset B_0$, since $S_0(z, \cdot)$  is solution of $H[u]=0$ in $\R^N$, we apply the comparison principle with
 $g = S_0(z,\cdot)$ on $\partial B_0$, and  deduce that
 \[0= S_0(z,z)= S_0(z,y_0) + S_0(y_0,z) \qquad \hbox{for some $y_0 \in \partial B_0$.}\]
 This implies that we can find a sequence $\xi_n$ of cycles passing through $z$ and $y_0$ with
 \[ \ell_0(\xi_n) \to 0 \qquad\hbox{and} \qquad \inf_n \ell(\xi_n) >0.\]
 This in turn implies that $y_0 \in \A$ by  Proposition \ref{basta} , against the assumption that $\A$ is contained in the interior of $B_0$.
 \end{proof}

 \smallskip

 We proceed proving a localization property for the discounted equation.

 \begin{Proposition}\label{extra}  Given $z \in \R^N$ there exist    $\la_z >0$  such that $u_\la$ and
  the maximal subsolution of \eqref{DP}  in $C_{z,\la}$  coincide at $z$ for $\la < \la_z$ and some ball $C_{z,\la}$.
 \end{Proposition}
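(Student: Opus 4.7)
The inequality $u_\la(z) \le \tilde u(z)$, where $\tilde u := \tilde u_{\la, C}$ denotes the maximal subsolution of \eqref{DP} on a ball $C \ni z$, is immediate from maximality, since $u_\la|_{C}$ is itself a subsolution of \eqref{DP} on $C$. My plan is to obtain the reverse inequality by extending $\tilde u$ to a global subsolution $\bar u$ of \eqref{DP} on $\R^N$ with $\bar u(z) = \tilde u(z)$; the maximality of $u_\la$ on $\R^N$ will then force $\tilde u(z) = \bar u(z) \le u_\la(z)$.

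The extension will mimic the Perron-type construction in the proof of Proposition \ref{prescorcia}. By \eqref{A3} pick $\eps, R_*, \delta > 0$ with $\max_{|p|\le \eps} H(x,p) \le -\delta$ for $|x|\ge R_*$, so that $\phi(x) := -(\eps/2)|x|$ satisfies $H(x, D\phi) \le -\delta$ outside $B_{R_*}$. Choose $C_{z,\la} = B(z, R_\la)$ large enough to contain $B_{R_*}$ and set
\[
\bar u(x) := \begin{cases}\min\{\tilde u(x),\, \phi(x) + b\} & \text{if } x \in C_{z,\la},\\ \phi(x) + b & \text{if } x \in \R^N \setminus C_{z,\la},\end{cases}
\]
for a constant $b$ to be chosen. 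The constant must satisfy two competing requirements: (i) $\phi(z) + b \ge \tilde u(z)$, so that $\bar u(z) = \tilde u(z)$; and (ii) $\phi + b$ is a subsolution of \eqref{DP} wherever it is active in the minimum, which on $|x|\ge R_*$ reduces to the bound $b \le \delta/\la + \eps R_*/2$. The region $|x|\le R_*$ is handled by requiring $\phi + b$ to dominate the locally equibounded $\tilde u$ there (Proposition \ref{prop4-2}), so the minimum reduces to $\tilde u$ inside $B_{R_*}$ and the subsolution condition there is free. Then $\bar u$ is a subsolution globally: on $\Int C_{z,\la}$ as a pointwise minimum of two subsolutions, which is legitimate by convexity of $H$; outside $\overline{C_{z,\la}}$ as $\phi + b$ itself; and continuity across $\partial C_{z,\la}$ is automatic since $\tilde u$ blows up toward the boundary of $C_{z,\la}$ (cf.~Lemma \ref{lem1-3}) while $\phi + b$ stays finite, so $\bar u$ equals $\phi + b$ in a one-sided neighborhood of $\partial C_{z,\la}$.

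The main obstacle is the compatibility of (i) and (ii). Proposition \ref{prop4-2} bounds $\tilde u(z)$ by $\la^{-1} C_{R_\la} + \|\psi_{R_\la}\|_{\infty, B_{R_\la/2}}$, and the ceiling on $b$ imposed by (ii) is of the same order $1/\la$; the two demands become compatible provided the barrier $\psi_{R_\la}$ of Lemma \ref{lem1-3} is chosen so that the constant $C_{R_\la}$ is strictly less than $\delta$, after which the remaining $O(1)$ discrepancies are absorbed by imposing $\la < \la_z$ for a suitable threshold $\la_z > 0$ depending on $z$. Working out this balance, and in particular showing that $\psi_{R_\la}$ can be selected to make $C_{R_\la} < \delta$, is the technical crux of the argument.
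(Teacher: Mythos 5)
Your strategy (extend the state--constraint solution $\tilde u$ of the ball to a global subsolution and invoke the maximality of $u_\la$) is legitimate and genuinely different from the paper's, which works with the representation formula of Remark \ref{extra999} and rules out near--optimal trajectories for $u_\la(z)$ coming from infinity. However, two steps of your construction do not hold as stated. First, the maximal subsolution of \eqref{DP} in a ball does \emph{not} blow up at the boundary: it is the state--constraint value function, and the constant curve already gives $\tilde u(x)\le L(x,0)/\la<\infty$ on $\overline{C_{z,\la}}$ (for $\la u+|Du|=0$ it is identically $0$). Lemma \ref{lem1-3} produces a barrier that bounds all subsolutions from \emph{above} and blows up at $\partial B_R$; the supremum of the subsolutions need not inherit the blow--up. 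So continuity of $\bar u$ across $\partial C_{z,\la}$ is not automatic; you would instead have to force $\phi+b\le\tilde u$ on an inner neighborhood of $\partial C_{z,\la}$, which is possible because $\tilde u\ge u_\la$ is bounded below uniformly in $\la$ (Lemma \ref{lem4-2}) once the radius of $C_{z,\la}$ is taken large compared with $b$. This part is repairable.

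The serious gap is the one you flag yourself and leave open: $C_{R}<\delta$ cannot be arranged by choosing $\psi_R$ in Lemma \ref{lem1-3}. Any $C^1$ function on $B_R$ tending to $+\infty$ at $\partial B_R$ attains an interior minimum $x_{\min}$, where $D\psi_R(x_{\min})=0$, so necessarily $C_R\ge -H(x_{\min},0)$; since $c=0\ge\max_x\min_pH(x,p)$, the quantity $-\min_pH(x,p)$ is nonnegative everywhere and can exceed $\delta$ except on a possibly small set, and even placing the critical point inside $\{H(\cdot,0)\ge-\delta\}$ does not control $H(x,D\psi_R)$ on the region where $|D\psi_R|$ interpolates between $0$ and large values. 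The bound you actually need, $\max_{B_{R_*}}\tilde u\le \delta/\la+O(1)$, is true but must come from elsewhere: for instance, a subsolution of \eqref{DP} that exceeds $K/\la$ on an open ball $B_0\supset\A$ satisfies $H(x,Dv)\le-K$ there, contradicting Proposition \ref{palle}; hence $\min_{B_0}\tilde u\le 0$, and the uniform Lipschitz bound coming from $H(x,D\tilde u)\le-\la\tilde u\le O(1)$ and coercivity then yields $\max_{B_{R_*}}\tilde u=O(1)$. With that substitution and the boundary fix above your argument closes, but as written the ``technical crux'' rests on an impossible choice of barrier, so the proof is incomplete.
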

 \smallskip

\begin{Remark}\label{extra999}
We recall that the maximal subsolution to \eqref{DP} in some ball $B$ is nothing  but the state constraint solution.  If $u_\la(z)$
 coincide with this solution   then
 \[u_\la(z) = \inf \left \{\int_{-\infty}^0 e^{\la s} \, L(\xi, \dot \xi) \, ds \mid \xi(s) \in \overline B \; \forall\, s, \xi(0)=z \right\}\]
 If, on the contrary $u_\la(z)$  is strictly less to the maximal subsolution in $B$ then
 \[u_\la(z) = \inf \left \{ e^{- \la T} u(\xi(-T)) + \int_{-T}^0 e^{\la s} \, L(\xi, \dot \xi) \, ds \mid \xi(s) \in \overline B \, \forall \; s, \xi(0)=z,\,
  \xi(-T) \in \partial B \right \}\]
  \end{Remark}
 \smallskip

  We need two preliminary lemmata.

 \begin{Lemma}\label{lemsunday} Let $\la >0$, $x$  be an arbitrary element of $\R^N$,  $\xi$ a curve
 defined in $[-t,0]$, for some $t >0$, with $\xi(0)=x$ then
\[ u_\la(x) \leq  e^{-\la t} \, u_\la(\xi(-t)))  + \int_{-t}^0  e^{\la s} \, L(\xi, \dot \xi) \, ds.\]
If $\eta: (-\infty,0] \to \R^N$ with $\eta(0)=x$ then
\[u_\la(x) \leq \int_{-\infty}^0 e^{\la s} \, L(\eta, \dot \eta) \, ds + \liminf_{t \to + \infty} e^ {- \la t} \, u_\la(\eta(-t)).\]
\end{Lemma}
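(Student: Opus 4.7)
The plan is to derive both inequalities from a single differential estimate along the curve, where Fenchel duality converts the Hamiltonian subsolution bound for $u_\la$ into a Lagrangian estimate on the derivative of $s \mapsto e^{\la s}u_\la(\xi(s))$.

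First, I would observe that by Theorem \ref{thm1-4} the function $u_\la$ is locally Lipschitz, and by the convexity of $H$ in $p$ together with the convex-hull description of the Clarke generalized gradient recalled in the Setting, the a.e.\ pointwise viscosity subsolution inequality upgrades to
\[
\la u_\la(x) + H(x,p) \leq 0 \qquad\text{for every } x\in\R^N \text{ and every } p \in \partial u_\la(x):
\]
at differentiability points of $u_\la$ this inequality holds a.e., and every element of $\partial u_\la(x)$ is a convex combination of limits of such gradients, so convexity and continuity of $H$ propagate the bound.

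Now assume $\xi$ absolutely continuous on $[-t,0]$ (otherwise $L(\xi,\dot\xi)$ is not integrable and the right-hand side is $+\infty$). The composition $s\mapsto u_\la(\xi(s))$ is then absolutely continuous, and at almost every $s$ the standard chain-rule bound for Lipschitz compositions provides some $p_s \in \partial u_\la(\xi(s))$ with $\tfrac{d}{ds}u_\la(\xi(s)) \leq p_s \cdot \dot\xi(s)$. Combining with Fenchel's inequality $p_s\cdot \dot\xi(s) \leq L(\xi(s),\dot\xi(s)) + H(\xi(s),p_s)$ and the displayed bound above,
\[
\tfrac{d}{ds}\bigl(e^{\la s}u_\la(\xi(s))\bigr) \leq e^{\la s}\bigl[\la u_\la(\xi(s)) + L(\xi(s),\dot\xi(s)) + H(\xi(s),p_s)\bigr] \leq e^{\la s} L(\xi(s),\dot\xi(s)).
\]
Integrating from $-t$ to $0$ yields the first inequality. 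For the second assertion, apply the first with $\xi = \eta|_{[-t,0]}$ for each $t>0$. If $\int_{-\infty}^0 e^{\la s} L(\eta,\dot\eta)\, ds = +\infty$ the claim is trivial; otherwise, the uniform lower bound $L(\cdot,q)\geq -\max_K H(\cdot,0)$ on the compact set $K$ coming from the Fenchel relation, combined with \eqref{fenchel13} off $K$, makes the negative part of $e^{\la s} L(\eta,\dot\eta)$ integrable on $(-\infty,0]$, so dominated convergence gives $\int_{-t}^0 e^{\la s} L(\eta,\dot\eta)\, ds \to \int_{-\infty}^0 e^{\la s} L(\eta,\dot\eta)\, ds$ as $t\to+\infty$. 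Taking $\liminf_{t \to \infty}$ of both sides of the first inequality concludes.

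The most delicate point is the propagation of the subsolution inequality to every element of $\partial u_\la(x)$ rather than only to a.e.\ gradient, and the related fact that the a.e.\ derivative of the composition $u_\la\circ\xi$ is captured by an element of $\partial u_\la(\xi(s))$; both depend essentially on convexity of $H$ in $p$. An alternative implementation of the entire derivative-along-curve step, which bypasses these measure-theoretic subtleties, is to work first with a sup-convolution regularization $u_\la^\eps$ (semiconcave, hence admitting the classical chain rule along $\xi$) and let $\eps \to 0^+$, using local uniform convergence $u_\la^\eps \to u_\la$ to pass the integrated inequality to the limit.
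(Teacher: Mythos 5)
Your argument is essentially the paper's own proof: both differentiate $s\mapsto e^{\la s}u_\la(\xi(s))$ along the curve, pick a suitable element of the Clarke gradient $\partial u_\la(\xi(s))$, combine the Fenchel inequality with the pointwise subsolution bound $\la u_\la+H(x,p)\le 0$ for $p\in\partial u_\la(x)$, and integrate; the second assertion is then obtained by letting $t\to+\infty$, exactly as the paper does. You simply make explicit two points the paper leaves tacit (the propagation of the a.e.\ subsolution inequality to the whole Clarke gradient via convexity of $H$, and the integrability of the negative part of $e^{\la s}L(\eta,\dot\eta)$ needed to pass to the limit), which is a welcome but not substantively different elaboration.
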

Note that $L$ and the $u_\la$ are bounded from below, the indeterminate form $+ \infty - \infty$ cannot therefore appear in the above formula.
\begin{proof}    We just prove the first part of the assertion, the second part can be obtained sending $t$ to infinity. We have
\begin{eqnarray*}
  u_\la(x) - e^{- \la t} \, u_\la(\xi(-t)) &=& \int_{-t}^0 \frac d{ds} [e^{\la s} \, u_\la(\xi(s))] \, ds  \\
   &=&  \int_{-t}^0  e^{\la s} \, ( \la \, u_\la(\xi(s)) + p(s) \cdot \dot\xi(s) ) \, ds,
\end{eqnarray*}
where $p(s)$ is a suitable element of $\partial u_\la(\xi(s))$. Taking into account that $u_\la$ is a subsolution,  we further obtain
\begin{eqnarray*}
  u_\la(x) - e^{- \la t} \, u_\la(\xi(-t)) &\leq& \int_{-t}^0  e^{\la s} \, ( \la \, u_\la(\xi(s)) + L(\xi(s), \dot\xi(s))
   + H(\xi(s), p(s))) \, ds\\
  & \leq&  \int_{-t}^0  e^{\la s} \, L(\xi, \dot\xi) \, ds.
\end{eqnarray*}
This concludes the proof.
\end{proof}

\smallskip

\begin{Lemma}\label{corparty} Given  $\eps > 0$, $\la >0$,  $x_0 \in \R^N$, let $\xi: (- T, 0] \to \R^N$, $T \in \R \cup \{+ \infty\}$,
be a curve with $\xi(0)= x_0$   and
\begin{equation}\label{lemparty1}
u_\la(x_0)  \geq \int_{- T}^ 0 e^{\la s} \, L(\xi, \dot\xi) \, ds  + \liminf_{t \to T} e^ {- \la t} \, u_\la(\xi(-t)) - \eps
\end{equation}
then
\begin{equation}\label{lemparty2}
 u_\la(\xi(- t_1)) \geq e^{-\la (t_2-t_1)} \, u_\la(\xi(-t_2)) +  \int_{t_1-t_2}^0 e^{\la s} \,  L(\xi(\cdot -t_1)
  \, \dot\xi(\cdot -t_1)) \, ds  - \eps
\end{equation}
for any $T  > t_2 > t_1 \geq 0$.
\end{Lemma}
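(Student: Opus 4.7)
The statement is a standard propagation-of-near-optimality result in discounted infinite horizon control: if the full trajectory $\xi$ is $\eps$-optimal for $u_\la(x_0)$, then each of its sub-arcs is $\eps$-optimal for the corresponding dynamic programming value. The proof will chain together two instances of the one-sided subsolution inequality of Lemma \ref{lemsunday} against the hypothesis \eqref{lemparty1}.

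The plan is as follows. First, I apply the first part of Lemma \ref{lemsunday} to the curve $\xi$ restricted to $[-t_1,0]$ to obtain
\[
u_\la(x_0)\le e^{-\la t_1}u_\la(\xi(-t_1))+\int_{-t_1}^{0}e^{\la s}L(\xi,\dot\xi)\,ds. \tag{DPP1}
\]
Second, I apply Lemma \ref{lemsunday} to the time-shifted curve $\eta(s):=\xi(s-t_2)$, which starts at $\eta(0)=\xi(-t_2)$; for any admissible $\tau\in(0,\,T-t_2)$ this yields
\[
u_\la(\xi(-t_2))\le e^{-\la\tau}u_\la(\xi(-t_2-\tau))+\int_{-\tau}^{0}e^{\la s}L(\xi(s-t_2),\dot\xi(s-t_2))\,ds,
\]
and after the substitution $\sigma=s-t_2$, multiplication by $e^{-\la t_2}$, and passage to the $\liminf$ as $\tau\to T-t_2$, this rearranges to
\[
e^{-\la t_2}u_\la(\xi(-t_2))\le \int_{-T}^{-t_2}e^{\la s}L(\xi,\dot\xi)\,ds+\liminf_{t\to T}e^{-\la t}u_\la(\xi(-t)). \tag{DPP2}
\]

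Third, I combine the pieces. Rearranging (DPP1) gives $e^{-\la t_1}u_\la(\xi(-t_1))\ge u_\la(x_0)-\int_{-t_1}^{0}e^{\la s}L(\xi,\dot\xi)\,ds$. Inserting the hypothesis \eqref{lemparty1} into the right-hand side, and then using (DPP2) to estimate $\liminf_{t\to T}e^{-\la t}u_\la(\xi(-t))$ from below by $e^{-\la t_2}u_\la(\xi(-t_2))-\int_{-T}^{-t_2}e^{\la s}L(\xi,\dot\xi)\,ds$, I obtain, after the telescoping cancellation $\int_{-T}^{-t_1}-\int_{-T}^{-t_2}=\int_{-t_2}^{-t_1}$,
\[
e^{-\la t_1}u_\la(\xi(-t_1))\ge e^{-\la t_2}u_\la(\xi(-t_2))+\int_{-t_2}^{-t_1}e^{\la s}L(\xi,\dot\xi)\,ds-\eps.
\]
Multiplying through by $e^{\la t_1}$ and changing the variable $\sigma=s+t_1$ converts the integral term into $\int_{t_1-t_2}^{0}e^{\la s}L(\xi(\cdot-t_1),\dot\xi(\cdot-t_1))\,ds$ and the first term into $e^{-\la(t_2-t_1)}u_\la(\xi(-t_2))$, giving \eqref{lemparty2} as desired.

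The only real delicate point is justifying the passage to the limit in step two, which has to be handled whether $T$ is finite or $+\infty$. Because $L$ is bounded below (using \eqref{fenchel2}--\eqref{fenchel13} and local Lipschitz regularity of $u_\la$) and $u_\la$ is bounded below (Lemma \ref{lem4-2}), both the integral and the discounted boundary term pass cleanly to the $\liminf$; there is no $+\infty-\infty$ ambiguity. The integral $\int_{-t}^{-t_2}$ increases monotonically to $\int_{-T}^{-t_2}$, so the inequality (DPP2) is preserved in the limit. All other steps are algebraic, so this is where any care will be needed.
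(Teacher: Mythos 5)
Your proof is correct and follows essentially the same route as the paper's: both apply Lemma \ref{lemsunday} to the head arc on $[-t_1,0]$ and to the time-shifted tail arc beyond $-t_2$, then combine these two inequalities with the hypothesis \eqref{lemparty1} so that the head and tail contributions cancel and only the middle-arc estimate \eqref{lemparty2} survives. The paper merely packages your (DPP1), (DPP2) and the telescoping into a single chained inequality.
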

\begin{proof}
By \eqref{lemparty1} and Lemma \ref{lemsunday} we have
\begin{eqnarray*}
  u_\la(x_0) &\geq& \int_{- T }^{-t_2} e^{\la s} L(\xi,\dot\xi) \, ds +  \int_{-t_2}^{-t_1}  e^{\la s} L(\xi,\dot\xi) \, ds \\ &+&
 \int_{-t_1}^0  e^{\la s} L(\xi,\dot\xi) \, ds + \liminf_{t \to T} e^{-\la t} \, u_\la(\xi(-t)) -\eps  \\
 &\geq & e^{- \la t_2 } u_\la(\xi(-t_2)) - \liminf_{t \to T} e^{-\la t} \, u_\la(\xi(-t))\\ &+&
 e^{-\la t_1} \,  \int_{t_1-t_2}^0 e^{\la s} \,  L(\xi(\cdot -t_1) \, \dot\xi(\cdot -t_1)) \, ds \\
  &+& u_\la(x_0) - e^{-\la t_1} \, u_\la(\xi(-t_1)) + \liminf_{t \to T} e^{-\la t} \, u_\la(\xi(-t)) - \eps
\end{eqnarray*}
which gives   \eqref{lemparty2}.
\end{proof}

\smallskip

\begin{proof}[Proof of Proposition \ref{extra}]  Given $t > 0$ and a curve $\xi$ defined in $[-t,0]$, we set to ease notations
\[\rho_\la(t,\xi)= \int_{-t}^0 e^{\la s}\, L(\xi,\dot\xi) \, ds.\]
Let $B$ be a ball containing $z$ such that there    are positive constants $M_0$, $\de_0$, $a$,  with
\begin{eqnarray}
  L(x,q) & \geq & \de_0 \, |q|  \quad\hbox{for $x  \not \in B$, any $q$}   \label{extra991}\\
  L(x,q) &\geq& M_0  \quad\hbox{for $x  \not \in B$, any $q$}  \label{extra992}\\
  u_\la(x) &\geq& -a  \quad\hbox{for any $\la >0$, $x \in \R^N$} \label{extra993}
\end{eqnarray}
see \eqref{fenchel2}, \eqref{fenchel13}, and   Lemma \ref{lem4-2} to check out that this is possible.
Further, we take $\la_z$ such that
\begin{equation}\label{extra0}
\max \{ u_\la (x) \mid x \in \partial B,\, \la < \la_z\} < \frac {M_0}\la - a - 1  .
\end{equation}
 We fix $\la < \la_z$, were the assertion not true for such a $\la$ ,    we would find, see Remark \ref{extra999}, $T_n >0$,   $x_n \in \R^N$  with $|x_n| \to + \infty$ and curves $\xi_n:[-T_n,0] \to \R^N$
 joining $x_n$ to $z$ such that
 \begin{equation}\label{extra992}
   u_\la(z) \geq  e^{-\la T_n}\,u_\la(x_n) + \rho_\la(T_n, \xi_n) - \frac 1n.
 \end{equation}
     We set for  any $n$
\[ - T'_n =  \min \{t > - T_n, \mid\xi_n(t) \in \partial B\}\]
 so that the support of $\xi_n$ is outside $B$ in the time interval $(-T_n, -T'_n)$ and $\xi_n(-T'_n) \in \partial B$.
 We  have by  \eqref{extra992} and Lemma  \ref{corparty}
 \begin{eqnarray}
  u_\la(\xi_n(- T'_n)) &\geq& e^{-\la (T'_n- T_n)} \, u_\la(\xi_n(-T_n)) + \rho_\la(T_n - T'_n,\xi_n(\cdot -T'_n))  - \frac  1n  \nonumber\\
  &\geq& - a  + \rho_\la(T_n - T'_n,\xi_n(\cdot -T'_n)) - \frac 1n,   \label{extra10}
 \end{eqnarray}
for any $n$. We first assume that $\lim_n  T_n - T'_n = + \infty$, then
\[\liminf_n  \rho_\la(T_n - T'_n,\xi_n(\cdot -T'_n))  \geq \frac {M_0}\la\]
and consequently
\begin{equation}\label{thewe10}
  \liminf_n  u_\la(\xi_n(- T'_n)) \geq  - a + \frac {M_0}\la ,
\end{equation}
 in contradiction with \eqref{extra0} since $\xi_n(-T'_n) \in \partial B$.
If instead $T_n - T'_n < T$ for any $n$, some $T >0$, we  integrate by parts, bearing in mind that $L(x,q) >0$ for $x \not\in B$, any $q$,
 to get
\begin{eqnarray*}
\nonumber  \rho_\la(T_n-T'_n,\xi_n(\cdot - T'_n)) &=&  \left [- e^{\la t} \, \int_t^0 L(\xi_n(\cdot-T'_n),\dot\xi_n(\cdot-T'_n))
 \, ds \right  ]_{T'_n - T_n}^0  \\ &+&
  \int_{T'_n - T_n}^0 \la \, e^{\la t} \,  \left ( \int_t^0 L(\xi_n(\cdot-T'_n),\dot\xi_n(\cdot-T'_n))  \,ds\right ) \, dt\\
  &\geq & e^{- \la (T_n - T'_n)} \,  \int_{T'n-T_n}^0 L(\xi(\cdot - T'_n),\dot\xi(\cdot - T'_n)) \, ds \\ &\geq&
   e^{-\la T} \, \de_0 \,| \xi_n(-T_n) - \xi_n(-T'_n)|.
\end{eqnarray*}
Taking into account that $\xi_n(-T'_n) \in \partial B$ and $|\xi_n(-T_n)| = |x_n| \to + \infty$,  we  get
\begin{equation}\label{thewe11}
 \lim_n  \rho_\la(T_n - T'_n,\xi(\cdot -T'_n))  = + \infty,
\end{equation}
 which contradicts \eqref{extra10}.
\end{proof}

\bigskip

\section{Generalized Lagrangians and narrow convergence  of measures}

We consider the space $C(\R^{2N})$ of the continuous functions from $\R^{2N}$ to $\R$. Given  such a function $\Phi(x,q)$, we say that a locally Lipschitz continuous function $u$ is a subsolution for $\Phi$ if
\[ Du(x) \cdot q \leq \Phi(x,q)  \qquad\hbox{for a.e. $x  \in \R^N$, any $q \in \R^N$}\]
we further say that $u$ is a strict subsolution  if
\[ Du(x) \cdot q \leq \Phi(x,q) - \eps  \qquad\hbox{for a.e. $x$, any $q$, some $\eps >0$.}\]
A real number $c$ is called critical value of $\Phi$ if $\Phi + c$ admits subsolutions but not strict subsolutions.

Given a discount factor $\la$, we similarly say that  a locally Lipschitz continuous function $u$ is a $\la$--discounted subsolution for $\Phi$ if
\[ \la \, u(x) + Du(x) \cdot q \leq \Phi(x,q)  \qquad\hbox{for a.e. $x  \in \R^N$, any $q \in \R^N$}\]

\smallskip

\begin{Remark}
If $\Phi$ has superlinear growth when $|q|$ goes to $+ \infty$, for any $x$, then we can apply   Fenchel transform  to
 define an Hamiltonian, denoted by $H_\Phi$, which is  convex in $p$ and satisfies \eqref{A1}, \eqref{A2}.
 A subsolution corresponding to $\Phi +a $ , for some $a \in \R$, is nothing but a subsolution of the equation $H_\Phi[u]=a$.
  Finally, the critical value of $\Phi$ is equal to the critical value of $H_\Phi$.
\end{Remark}

\medskip

We will denote by $\PP$ the space of Radon probability measure on $\R^{2N}$. Given $\mu \in \PP$ and $\Phi \in C(\R^{2N})$, integrable
 with respect to $\mu$, we will write  from now on, to ease notations,  $\langle \mu, \Phi\rangle$ in place of $\int \Phi \, d \mu$.

\medskip

 We state and prove some   convergence lemmata  with respect to the narrow topology
 we will use in what follows.

\begin{Lemma}\label{presub} Let $\Phi \in C(\R^N \times \R^N)$  be bounded from below, $\mu \in \PP$, then
\[ a:= \liminf_n \langle \mu_n, \Phi\rangle \geq \langle \mu, \Phi\rangle\]
for any sequence $\mu_n$ narrowly converging to $\mu$.
\end{Lemma}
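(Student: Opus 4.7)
The plan is to reduce to the standard fact that narrow convergence tests against bounded continuous functions, and then use a monotone truncation to handle the unbounded-above case.

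First I would normalize: let $M$ be a lower bound for $\Phi$ (so $\Phi \geq -M$) and set $\Psi := \Phi + M \geq 0$. Both sides of the desired inequality shift by the same constant $M = \langle \mu_n, M\rangle = \langle \mu, M\rangle$ (recall $\mu_n,\mu\in\PP$ are probability measures), so it suffices to prove
\[
\liminf_n \langle \mu_n,\Psi\rangle \geq \langle \mu,\Psi\rangle.
\]

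Next I would truncate: for each integer $k\geq 1$ put $\Psi_k := \Psi \wedge k$, which is a \emph{bounded} continuous function. Since $\mu_n \to \mu$ narrowly and $\Psi_k \in C_b(\R^{2N})$, the definition of narrow convergence yields
\[
\lim_n \langle \mu_n, \Psi_k\rangle = \langle \mu, \Psi_k\rangle.
\]
On the other hand $\Psi \geq \Psi_k$ pointwise and $\mu_n \geq 0$, so $\langle \mu_n,\Psi\rangle \geq \langle \mu_n,\Psi_k\rangle$. Taking $\liminf_n$ on both sides,
\[
\liminf_n \langle \mu_n,\Psi\rangle \;\geq\; \liminf_n \langle \mu_n,\Psi_k\rangle \;=\; \langle \mu,\Psi_k\rangle.
\]

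Finally I would let $k\to\infty$. Since $0\leq \Psi_k \nearrow \Psi$ pointwise, the monotone convergence theorem gives $\langle \mu,\Psi_k\rangle \to \langle \mu,\Psi\rangle \in [0,+\infty]$. Hence
\[
\liminf_n \langle \mu_n,\Psi\rangle \;\geq\; \langle \mu,\Psi\rangle,
\]
and undoing the shift by $M$ yields the claim. There is no real obstacle here: the only subtlety worth flagging is that $\langle \mu,\Phi\rangle$ might equal $+\infty$, in which case the inequality is still meaningful because the integrals $\langle \mu_n,\Psi\rangle$ are well defined in $[0,+\infty]$ and the argument above shows they diverge to $+\infty$ as well; the assumption that $\Phi$ is bounded below is exactly what prevents the appearance of the indeterminate form $\infty-\infty$ and guarantees that each $\langle\mu_n,\Phi\rangle$ is unambiguously defined.
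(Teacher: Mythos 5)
Your proof is correct and follows essentially the same route as the paper: truncate $\Phi$ from above so the truncation lies in $C_b(\R^{2N})$, pass to the limit in $n$ using the definition of narrow convergence, and then remove the truncation by monotone convergence. The preliminary shift to make the integrand nonnegative is a cosmetic difference; the paper works directly with $\Phi\wedge R$, which is already bounded since $\Phi$ is bounded below.
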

\begin{proof}  We can assume that $a$ is finite, otherwise the assertion is trivial. Given $\eps >0$, $R  >0$, we find a subsequence $\mu_{n_k}$   of $\mu_n$ with
\[a + \eps  \geq   \langle \mu_{n_k}, \Phi \rangle \geq  \langle \mu_{n_k},  \Phi \wedge R \rangle\]
for $n_k$  large enough.
Since the functions $\Phi \wedge R$, for $R >0$,  are  bounded continuous by the assumption,
we get
\[ a + \eps \geq \lim_{n_k} \langle \mu_{n_k},  \Phi \wedge R \rangle  = \langle \mu,  \Phi \wedge R \rangle\]
and consequently, letting $\eps$ going to $0$
\[ \langle \mu,  \Phi \wedge R \rangle \leq  a  \qquad\hbox {for any $R $.}\]
Since $ \Phi \wedge R$ converges monotonically to $\Phi$ as $ R \longrightarrow + \infty$, we finally obtain  by monotone convergence Theorem
\[ \langle \mu,  \Phi \rangle = \lim_{R \to + \infty} \langle \mu,  \Phi \wedge R \rangle \leq a.\]
This ends the proof.
\end{proof}
\smallskip

We recall that   $L$ is bounded from below
  in force of \eqref{fenchel1}, \eqref{fenchel2}.   We set
  \[ m = \min_{\R^N \times \R^N} L. \]

 \begin{Lemma}\label{sub} Given any real number $a$ the sublevel
 \[ \mathcal V_a := \{ \mu \in \PP \mid \langle \mu, L \rangle \leq a\}\]
 is compact in the narrow topology, provided that it is not empty.
 \end{Lemma}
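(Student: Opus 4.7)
The natural plan is to invoke Prokhorov's theorem: I will verify that $\mathcal V_a$ is narrowly closed and tight, and then conclude compactness.

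Closedness is immediate from Lemma \ref{presub}. Suppose $(\mu_n) \subset \mathcal V_a$ converges narrowly to some $\mu \in \PP$. Since $L$ is continuous and bounded below by $m := \min L$, the lemma yields $\langle \mu, L\rangle \leq \liminf_n \langle \mu_n, L\rangle \leq a$, so $\mu \in \mathcal V_a$.

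For tightness, the strategy is Markov's inequality applied to the nonnegative continuous function $L - m$: for any $\mu \in \mathcal V_a$ and any $N > 0$,
\[
\mu\bigl(\{(x,q) : L(x,q) \geq m + N\}\bigr) \leq \frac{\langle \mu,\, L - m\rangle}{N} \leq \frac{a - m}{N}.
\]
Given $\eta > 0$, choose $N = (a-m)/\eta$; then it suffices to show that the closed sublevel set $K_N := \{L \leq m + N\}$ is relatively compact in $\R^{2N}$, for then $K_N$ witnesses tightness with $\mu(K_N^c) < \eta$ uniformly in $\mu \in \mathcal V_a$. Compactness of $K_N$ should be established by combining the three lower bounds on $L$: \eqref{fenchel1} provides uniform superlinear growth in $q$ on any $x$-ball, while \eqref{fenchel2} gives $L(x,q) \geq \delta_0 |q|$ and \eqref{fenchel13} gives $L(x,q) \geq M_0 > 0$ once $x$ leaves a fixed compact. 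Together these should force $K_N$ into a bounded region of $\R^{2N}$. Prokhorov's theorem then yields narrow relative compactness of $\mathcal V_a$; combined with closedness, this gives the required compactness.

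The main obstacle I anticipate is controlling the $x$-direction of the coercivity of $L$: bound \eqref{fenchel13} only secures $L(x,q) \geq M_0$ at infinity in $x$, and by itself this cannot force $\{L \leq m + N\}$ to be bounded in $x$ for $N \geq M_0 - m$. Overcoming this will require invoking the full strict inequality in \eqref{A3}, rather than merely its consequence $-H(x,0) \geq M_0$, to guarantee that $L(x,q)$ actually diverges as $|x| \to \infty$ in $\R^N$; alternatively one might have to accept a weaker tightness statement and argue compactness for sublevels with $N$ small, then iterate or approximate.
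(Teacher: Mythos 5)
Your strategy is the same as the paper's: closedness of $\mathcal V_a$ via Lemma \ref{presub}, uniform tightness via a Chebyshev-type estimate on $L$, then Prokhorov. The paper's version of the tightness step is to produce, for each $\eps>0$, a compact $K_\eps\subset\R^{2N}$ with $L\geq 1/\eps$ on $K_\eps^c$, so that $a\geq\langle\mu,L\rangle\geq m\,\mu(K_\eps)+\eps^{-1}\mu(K_\eps^c)$ yields $\mu(K_\eps^c)\leq(a+|m|)\,\eps$; this is exactly your Markov inequality with $K_N=\{L\leq m+N\}$ in the role of $K_\eps$. In both arguments everything therefore hinges on the compactness of the sublevel sets of $L$.

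The obstacle you flag at the end is genuine, and it is precisely the point at which the paper's own proof is thinnest: the paper attributes the existence of $K_\eps$ to \eqref{fenchel2}, but \eqref{fenchel2} only yields $L(x,q)\geq\de_0|q|$ for $x$ outside a fixed compact $K$, which, combined with the locally uniform superlinearity \eqref{fenchel1}, bounds $\{L\leq c\}$ in the $q$-direction but says nothing in the $x$-direction; \eqref{fenchel13} contributes only the fixed lower bound $M_0$. Assumption \eqref{A3} does not force $L(x,q)\to+\infty$ as $|x|\to\infty$ for bounded $q$: for $H(x,p)=|p|-f(x)$ with $f$ continuous and bounded, $\min f=0$ (so $c=0$) and $f\to 1$ at infinity, one checks that even after the superlinearization of Proposition \ref{square} one has $L(x,0)=f(x)$, which stays bounded; the Dirac masses $\delta_{(x_n,0)}$ with $|x_n|\to+\infty$ then all lie in $\mathcal V_a$ for $a\geq\sup f$ yet admit no narrow limit point in $\PP$. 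So your proof is incomplete exactly where you say it is, and the missing compactness cannot be extracted from \eqref{fenchel1}--\eqref{fenchel13} alone; it requires either an additional coercivity hypothesis in $x$ (namely $\lim_{|x|\to\infty}\inf_{|q|\leq r}L(x,q)=+\infty$ for every $r>0$) or a restriction on the level $a$. In short, you have faithfully reproduced the paper's argument up to, and including, its weakest step, and your final paragraph correctly identifies that step as the one needing further justification.
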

\begin{proof}  Given $\eps >0$, we find  by \eqref{fenchel2} a compact subset   $K_\eps$ of  $\R^N \times \R^N$ with
\[ L(x,q) \geq \frac 1\eps  \qquad\hbox{whenever $(x,q) \not\in K_\eps$.}\]
Given $\mu \in \mathcal V_a$, we have
\[a \geq \langle \mu, L \rangle \geq m \, \mu(K_\eps) + \frac 1\eps \, \mu (K_\eps^c) \geq - |m| +
\frac 1\eps  \, \mu(K^c_\eps) , \]
where $A^c$ indicates the complement of $A$.
This implies
\[ (a + |m|) \,  \eps \geq \mu (K_\eps^c) \qquad\hbox{for $\mu \in \mathcal V_a$}\]
and shows that the measures in $\mathcal V_a$ are uniformly tight, so that $\mathcal V_a$ is conditionally
narrowly compact. Finally if $\mu_n \in \mathcal V_a $ narrowly converges to some $\mu$, we have by Lemma \ref{presub}
\[ a \geq \liminf_n \langle \mu_n , L \rangle  \geq  \langle \mu, L \rangle\]
which  concludes the proof.
\end{proof}
\smallskip

\begin{Lemma} \label{subsub} Let $\mu_n$  a sequence in $\PP$  narrowly converging to some $\mu$, with $\langle \mu_n,L\rangle $ bounded, then
\[ \langle \mu_n,\Phi \rangle \to\langle \mu,\Phi \rangle \]
whenever $\Phi$ is  continuous, supported in $K \times \R^N$, for some compact subset $K$ of $\R^N$, and with linear growth as $|q| $ goes to infinity.
\end{Lemma}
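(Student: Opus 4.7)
The difficulty is that $\Phi$ is not bounded (linear growth in $q$), so narrow convergence does not directly apply. My plan is to separate $\Phi$ into a compactly supported bounded part, on which narrow convergence applies, plus a tail which I control uniformly in $n$ using the superlinear growth of $L$ established in \eqref{fenchel1} and the hypothesis that $\langle \mu_n,L\rangle$ is bounded.

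First, fix a cutoff function $\chi_R \in C_c(\R^N)$ with $0\le\chi_R\le 1$, $\chi_R=1$ on $B_R$ and $\chi_R=0$ outside $B_{R+1}$, and split
\[
\Phi(x,q) = \Phi(x,q)\chi_R(q) + \Phi(x,q)\big(1-\chi_R(q)\big).
\]
Since $\Phi$ is supported in $K\times\R^N$ with $K$ compact, the first summand is bounded and continuous with compact support in $\R^{2N}$, so by narrow convergence $\langle\mu_n,\Phi\chi_R\rangle\to\langle\mu,\Phi\chi_R\rangle$ for every fixed $R$.

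Next I estimate the tail. Let $C>0$ satisfy $|\Phi(x,q)|\le C(1+|q|)$ for $(x,q)\in K\times\R^N$, and let $B\ge m$ be a uniform bound on $\langle\mu_n,L\rangle$. Given $\eps>0$, \eqref{fenchel1} applied on the compact set $K$ yields $R_\eps>0$ such that $|q|\le \eps(L(x,q)-m)$ for every $x\in K$ and $|q|\ge R_\eps$. On the support of $1-\chi_R$, with $R\ge R_\eps$, this gives
\[
|\Phi(x,q)(1-\chi_R(q))|\le C\mathbf 1_{K\times B_R^c}(x,q)\bigl(1+\eps(L(x,q)-m)\bigr),
\]
and therefore
\[
\bigl|\langle\mu_n,\Phi(1-\chi_R)\rangle\bigr|\le C\,\mu_n(K\times B_R^c)+C\eps(B-m).
\]
The second addendum is as small as we wish by choosing $\eps$ small. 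For the first, Lemma \ref{sub} (or equivalently a direct Markov estimate using $L\ge m$ and the superlinear growth \eqref{fenchel1}) shows that the family $\{\mu_n\}$ is uniformly tight, so the quantity $\mu_n(K\times B_R^c)$ can be made arbitrarily small uniformly in $n$ by choosing $R$ sufficiently large.

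The same estimate applies to $\mu$, because by Lemma \ref{presub} we have $\langle\mu,L\rangle\le\liminf_n\langle\mu_n,L\rangle\le B$, so $\mu$ itself belongs to the sublevel $\mathcal V_B$. Combining these three ingredients via the triangle inequality
\[
|\langle\mu_n,\Phi\rangle-\langle\mu,\Phi\rangle|\le |\langle\mu_n,\Phi(1-\chi_R)\rangle|+|\langle\mu,\Phi(1-\chi_R)\rangle|+|\langle\mu_n-\mu,\Phi\chi_R\rangle|,
\]
and choosing first $\eps$ small, then $R$ large, and finally $n$ large, yields the conclusion. The main technical point is the uniform tail control: it is precisely the interplay between linear growth of $\Phi$ and superlinear growth of $L$ on $K$ that allows a single bound on $\langle\mu_n,L\rangle$ to dominate the tails of $\Phi$ against all $\mu_n$ simultaneously.
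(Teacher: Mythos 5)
Your proof is correct, but it takes a genuinely different route from the paper's. You truncate $\Phi$ in the $q$-variable and control the tail $\Phi(1-\chi_R)$ uniformly in $n$ by playing the linear growth of $\Phi$ against the superlinear growth \eqref{fenchel1} of $L$ on the compact set $K$, together with the uniform bound on $\langle\mu_n,L\rangle$ and the tightness estimate from Lemma \ref{sub}; the conclusion then follows from a standard three-term triangle inequality. The paper instead passes to the conditional probabilities $\overline\mu_n=\mu_n(\cdot\mid B_R\times\R^N)$ for a radius $R$ chosen (via Portmanteau) so that $\mu_n(B_R\times\R^N)\to\mu(B_R\times\R^N)>0$, shows these conditional measures are $1$--uniformly integrable by the same interplay between $|q|$ and $L$, and upgrades narrow convergence to convergence in the Wasserstein distance $W^1$, which is by definition equivalent to convergence in duality with continuous functions of linear growth. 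The underlying analytic mechanism is identical -- a single bound on $\langle\mu_n,L\rangle$ dominates the first moments on $K\times\R^N$ uniformly -- but your argument is more elementary and self-contained, avoiding the Wasserstein machinery and the conditioning step, at the cost of writing out the $\eps$--$R$--$n$ bookkeeping explicitly; the paper's version is shorter once one accepts the characterization of $W^1$ convergence as a black box. One minor point worth making explicit in your write-up: the estimate $\mu_n(K\times B_R^c)\to 0$ uniformly in $n$ follows most directly from a Markov inequality using $\inf_{x\in K,\,|q|\ge R}(L(x,q)-m)\to+\infty$, which is exactly what \eqref{fenchel1} provides; your parenthetical remark to this effect is the right one, since Lemma \ref{sub} as stated gives tightness in $\R^{2N}$ rather than the specific decay in the $q$-direction over $K$ that you need, though either suffices.
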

\begin{proof}   We take $\Phi$ as in the statement.
The function
\[ R \mapsto \mu(B_R \times \R^N)\]
is nondecreasing  and  so possesses countably many discontinuities.  For any of its  continuity points $R$ we have
$\mu(\partial B_R \times \R^N)=0$ and
 consequently by the Portmanteau Theorem
 \begin{equation}\label{subsub1}
  \mu_n(B_R \times \R^N)  \to \mu(B_R \times \R^N).
 \end{equation}
We can therefore find   $R >0$ satisfying \eqref{subsub1}  such that  $\spt \Phi \subset B_R \times \R^N$,
 $\mu(B_R \times \R^N) >0$, $L >0$ in $B_R^c \times \R^N$, see \eqref{fenchel2}. We consider the conditional probabilities
\[ \overline \mu_n (\cdot)=  \mu_n(\cdot \mid B_R \times \R^N), \qquad   \overline \mu (\cdot)=  \mu(\cdot \mid B_R \times \R^N).\]
Note that the $\langle \overline \mu_n, L\rangle $  are bounded and $\overline \mu_n$ narrowly converge to $\overline \mu$.
Given $\eps >0$, we find by \eqref{fenchel1} a compact  subset $C \subset \R^N$ with
\[ L(x,q) \geq \frac 1\eps \, |q| \qquad\hbox{whenever $x\in B_R$, $q \not\in C$.}\]
We set $m = \min_{\R^N \times \R^N} L$ and denote by $a$ an upper bound of $\langle \overline\mu_n,L\rangle $.  Bearing in mind that the
the $\overline \mu_n$ are supported in $B_R \times \R^N$, we have
\begin{eqnarray*}
  a &\geq& \langle \overline\mu_n,L\rangle   \geq m \, \overline\mu_n(B_R \times C) +  \int_{ (B_R \times
C)^c} L \, d \overline\mu_n \\ &\geq& - |m| + \int_{B_R \times
(\R^N \setminus C)} L \, d \overline \mu_n \\
   &\geq&- | m|  + \frac 1\eps \, \int_{B_R \times
(\R^N \setminus C)} |q| \, d \overline\mu_n \\
\end{eqnarray*}
which implies
\[ (a + |m|) \, \eps \geq  \int_{B_R \times
(\R^N \setminus C)} |q| \, d \overline\mu_n = \int_{(B_R\times
C)^c} |q| \, d \overline\mu_n. \]
We derive that the $\overline \mu_n$ are $1$--uniformly integrable and so conditionally compact with respect to the  Wasserstein distance of order $1$,
denoted by $W^1$.  Since
$\overline \mu_n$ narrowly converges to $\overline \mu$, we deduce that the convergence actually holds with respect to $W^1$.
Such a convergence can be  equivalently expressed in duality with continuous function with linear growth at infinity, and we thus  get,
taking into account that $\spt  \, \Phi \subset B_R \times \R^N$
\[ \frac 1{\mu_n(B_R \times \R^N)} \, \langle \mu_n, \Phi \rangle = \langle \overline\mu_n, \Phi \rangle \to  \langle \overline\mu, \Phi \rangle
=  \frac 1{\mu(B_R \times \R^N)} \, \langle \mu, \Phi \rangle,\]
which gives the assertion.

\end{proof}

\smallskip

 \bigskip

\section{Mather measures for the discounted equation} \label{matherdisco}

The aim of this section is to show the existence of minimizing measures related to \eqref{DP}. More precisely, we will prove:

\begin{Theorem}\label{newkey} Given $z \in \R^N$, $0 < \la < \la_z$ ( $\la_z$ as in Proposition \ref{extra}), there exists a probability measure $\mu \in \PP$ with
\begin{equation}\label{rep81}
 \langle \mu,L \rangle = \la \, u_\la(z), \quad   \langle \mu,\Phi \rangle \geq  \la \, u(z)
\end{equation}
for any $\Phi \in C(\R^{2N})$ bounded from below admitting a $\la$--discounted subsolution and any   $\la$--discounted subsolution $u$ of $\Phi$.
\end{Theorem}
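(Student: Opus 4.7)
The plan is to realize $\mu$ as a separating functional obtained from the Hahn--Banach theorem, exploiting the duality between $C_b(\R^{2N})$ equipped with the strict topology and the space of bounded signed Radon measures with the narrow topology (Appendix \ref{strict}). Set $a := \la\, u_\la(z)$ and use the localization of Proposition \ref{extra} to get a ball $C_{z,\la}$ whose interior dynamics realize $u_\la(z)$; choose a truncation level $M$ large enough so that $L\wedge M$ still encodes the value $u_\la(z)$, in the sense that the maximal $\la$-discounted subsolution of the truncated problem at $z$ is still $u_\la(z)$.

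In a suitable space of bounded generalized Lagrangians I introduce the convex set
\[
\mathcal{U} := \{\Phi : \Phi \text{ admits a }\la\text{-discounted subsolution } u \text{ with } \la\, u(z) > a\},
\]
convexity being immediate from the convexity of the inequality $\la\, u + Du\cdot q \leq \Phi$ in $(u,\Phi)$. By maximality of $u_\la$, $L\wedge M\notin \mathcal{U}$, while shifting $u_\la$ up by $\eps/\la$ shows $L\wedge M + \eps\in \mathcal{U}$, so $L\wedge M$ is a boundary point. The key technical point is that $\mathcal{U}$ has nonempty interior in the strict topology; this rests on the localization of Section \ref{locale}, which allows a strict subsolution of a given $\Phi\in\mathcal{U}$ (with some slack $\eta>0$) to remain a subsolution under any strict-topology perturbation of $\Phi$, because only the behavior of $\Phi$ on a fixed compact region affects both the subsolution property and the value at $z$. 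Hahn--Banach then produces a nonzero continuous linear functional, i.e., a bounded signed Radon measure $\mu_M$, with $\langle \mu_M,\Phi\rangle \geq \langle \mu_M, L\wedge M\rangle$ for all $\Phi\in\mathcal{U}$. Since $\mathcal{U}$ is stable under adding nonnegative bounded continuous functions and strictly positive constants, $\mu_M\geq 0$ and $\langle \mu_M,1\rangle >0$, so after normalization $\mu_M\in\PP$. Testing with $\Phi = L\wedge M + \eps$ and sending $\eps\to 0^+$ gives $\langle \mu_M, L\wedge M\rangle = a$. For an arbitrary $\Phi$ with $\la$-discounted subsolution $u$, the shifted function $u+(a-\la\, u(z)+\eps)/\la$ is a $\la$-discounted subsolution of $\Phi +(a-\la\, u(z)+\eps)$ at level strictly greater than $a$; applying the separation inequality and letting $\eps\to 0^+$ yields $\langle \mu_M,\Phi\rangle \geq \la\, u(z)$.

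Finally I pass to the limit $M\to\infty$. Using $u_\la$ as a $\la$-discounted subsolution of $L\wedge M'$ for $M'\geq M$ yields $\langle \mu_M, L\wedge M'\rangle \geq a$, hence by monotone convergence $\langle \mu_M, L\rangle \geq a$; a uniform upper bound on $\langle \mu_M, L\rangle$ comes from the identity $\langle \mu_M, L\wedge M\rangle = a$ combined with the tightness produced by the superlinear growth of $L$ via \eqref{fenchel1}--\eqref{fenchel2}. By Lemma \ref{sub} the family $\{\mu_M\}$ is narrowly precompact; extract a limit $\mu\in\PP$. Lemma \ref{presub} together with $\langle \mu_M, L\wedge M_0\rangle \leq a$ for $M\geq M_0$ and monotone convergence deliver $\langle \mu, L\rangle = a$; the inequality $\langle \mu,\Phi\rangle\geq \la\, u(z)$ passes to narrow limits for bounded $\Phi$ (via Lemma \ref{subsub}) and extends to $\Phi\in C(\R^{2N})$ bounded from below by truncation $\Phi\wedge M''$ and monotone convergence. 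I expect the main obstacle to be the openness of $\mathcal{U}$ in the strict topology: this is precisely where the full strength of the localization results of Section \ref{locale} enters, decoupling the subsolution property at $z$ from the behavior of $\Phi$ outside a fixed compact region and thereby allowing the coarse strict-topology neighborhoods to be absorbed into $\mathcal{U}$.
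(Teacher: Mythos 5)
Your overall architecture is the paper's: a convex subset of the bounded generalized Lagrangians with $L\wedge M$ on its boundary, separation in $C_b(\R^{2N})$ with the strict topology, identification of the separating functional as a probability measure, and a limit in the truncation level $M$. But there is a genuine gap at the central step, namely the definition of $\mathcal U$ and the three properties you need of it. As you define it (a global $\la$--discounted subsolution $u$, tested against \emph{all} $q\in\R^N$, with $\la u(z)>a$), the set fails two of your three claims: it is not open in the strict topology (membership depends on $\Phi$ at infinity in $x$ and in $q$, with no built-in slack), and $L\wedge M+\eps\notin\mathcal U$, because $u_\la+\eps/\la$ satisfies $\la u+Du\cdot q\le L(x,q)+\eps$ but not $\le L(x,q)\wedge M+\eps$ for large $|q|$: the left-hand side grows linearly in $q$ while the right-hand side is capped at $M+\eps$. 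The fix --- which is what the paper does --- is to build the localization into the definition: require the strict inequality only for a.e.\ $x$ in the fixed ball $B_0=C_{z,\la}$ and only for $|q|\le R_0$, with an explicit margin $\eps$ (the set $\G_{\la,z}$). Then openness is immediate, since membership is a compact--open condition with slack; but the claim you dispatch in one line, ``by maximality of $u_\la$, $L\wedge M\notin\mathcal U$,'' becomes the hard point: a function satisfying the localized, velocity--truncated inequality need not be a global subsolution of \eqref{DP}, so the maximality of $u_\la$ does not apply directly. This is exactly what Proposition \ref{ana} (upgrading the truncated inequality to the full subsolution property on $B_0$), Proposition \ref{extra} (the maximal subsolution of the localized problem still equals $u_\la$ at $z$), and the subtangency argument of Proposition \ref{postdracu} are for. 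You cite the localization results of Section \ref{locale}, but you assign them to the openness, which is the easy direction; the boundary--point property is where they are actually consumed, and your proposal contains no substitute for that argument.

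A secondary gap concerns the limit $M\to\infty$: to invoke Lemma \ref{sub} for narrow precompactness of $\{\mu_M\}$ you need an upper bound on $\langle\mu_M,L\rangle$ uniform in $M$, and ``tightness produced by the superlinear growth of $L$'' does not supply it --- superlinear growth is what converts such a bound into tightness, not the reverse. The paper obtains the bound from the nesting $N_{\overline\G_{\la,z}}(L\wedge M)\subset N_{\overline\G_{\la,z}}(L\wedge M_0)$ for $M\ge M_0$, which forces $\langle\mu_M,L\wedge M_0\rangle=\la\,u_\la(z)$ for every such $M$ and then, via a support argument, $\spt\mu_M\subset\{L\le M_0\}$ and hence $\langle\mu_M,L\rangle\le M_0$ (Corollary \ref{corloca}). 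Your final reduction of a general pair $(\Phi,u)$ to the case $u(z)\ge u_\la(z)$ by adding a constant is the same as the paper's and is fine.
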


\smallskip

Our strategy is to construct a suitable convex subset of $C_b(\R^N)$,  with $L \wedge M$ for $M >0$ large
in its boundary, possessing   nonempty interior,  and then to apply Proposition \ref{sepa} about the existence of nonzero   elements
 in normal cones.  The nonzero elements in the normal cone
at $L \wedge M$ are, up to change of sign and normalization,  the probability measure appearing in the previous statement.

We proceed to prove some preliminary results.

\smallskip

\begin{Lemma}\label{preana} Given  $z \in \R^N$, $a \in \R$, $\la >0$,  an open  ball $B$ containing  $z$,
any Lipschitz function $u$ in $B$ satisfying $u(z) \geq a$ and
\begin{equation} \label{preana50}
\la \, u(x) + Du(x) \cdot q \leq L(x,q) \qquad\hbox{ for a.e. $x \in B$,  $q$ with $|q| =1$}
\end{equation}
has  Lipschitz constant in $B$ bounded from above by a quantity solely depending
on  $a$, $\la$,  $L$ and the diameter  of $B$.
\end{Lemma}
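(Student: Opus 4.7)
The plan is to exploit the inequality in two ways: once pointwise (by maximizing in $q$) to relate $|Du|$ to $u$, and once along a segment (via an exponential Gronwall-type device) to get a lower bound on $u$ from the single value $u(z)\ge a$.

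First, I would set $M_0:=\max_{\overline B\times S^{N-1}} L$, which is finite by continuity of $L$ and compactness of $\overline B\times S^{N-1}$, and depends only on $L$ and $B$. At a.e.\ differentiability point $x\in B$ with $Du(x)\neq 0$, plugging $q=Du(x)/|Du(x)|$ into \eqref{preana50} gives
\[
\lambda\,u(x)+|Du(x)|\le L\bigl(x,Du(x)/|Du(x)|\bigr)\le M_0,
\]
that is, $|Du(x)|\le M_0-\lambda\,u(x)$ a.e.\ in $B$. To turn this into a genuine Lipschitz bound, the remaining task is to control $u$ from below on $B$ in terms of the data $a,\lambda,L$ and the diameter $D:=\operatorname{diam}B$.

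For the lower bound, fix $x\in B$ and consider the unit-speed segment $\xi:[0,T]\to B$ from $x$ to $z$, where $T=|x-z|\le D$; the segment lies in $B$ by convexity. Setting $g(t)=u(\xi(t))$, the chain rule together with \eqref{preana50} applied at $q=\dot\xi(t)$ (which has unit norm) yields
\[
\lambda\,g(t)+g'(t)\le L(\xi(t),\dot\xi(t))\le M_0 \quad\text{for a.e.\ }t\in[0,T].
\]
Multiplying by $e^{\lambda t}$ and integrating from $0$ to $T$ gives $e^{\lambda T}g(T)-g(0)\le\tfrac{M_0}{\lambda}(e^{\lambda T}-1)$, i.e.
\[
u(x)\ge e^{\lambda T}u(z)-\tfrac{M_0}{\lambda}(e^{\lambda T}-1)\ge e^{\lambda T}a-\tfrac{M_0}{\lambda}(e^{\lambda T}-1)\ge C_0,
\]
where $C_0$ is a constant depending only on $a,\lambda,M_0$ and $D$ (just take the minimum over $T\in[0,D]$ of the right-hand side, splitting on the sign of $a$ if needed).

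Combining the two steps, $|Du(x)|\le M_0-\lambda\,u(x)\le M_0-\lambda\,C_0$ at a.e.\ $x\in B$, which is the desired uniform Lipschitz bound, depending only on $a,\lambda,L$ and $D$. The only genuine idea is the exponential multiplier trick in the second step; once that is in place, the rest is bookkeeping, and I expect no serious obstacle.
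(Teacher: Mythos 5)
Your argument is correct and follows the same two--step strategy as the paper's proof: first the pointwise estimate $|Du(x)|\le M_0-\la\,u(x)$ obtained by optimizing \eqref{preana50} over unit vectors $q$, and then a lower bound for $u$ on $B$ propagated from the single datum $u(z)\ge a$. Where you differ is in how the propagation is carried out. The paper covers the segment from $z$ to a minimum point of $u$ by a chain of balls of radius $1/(2\la)$ and runs a finite induction, which yields the doubling bound $\min_B u\ge 2^{2\la r}(a\wedge 0)-\big(\sum_{j=0}^{[2\la r]}2^j\big)\,r$; you instead integrate the differential inequality $\la g+g'\le M_0$ along the segment with the integrating factor $e^{\la t}$ and get directly $u(x)\ge e^{\la T}a-\frac{M_0}{\la}(e^{\la T}-1)$. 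The two bounds have the same exponential character in $\la\,\mathrm{diam}(B)$, but yours is cleaner and dispenses with the inductive bookkeeping. One technical point you should make explicit: \eqref{preana50} holds only for a.e.\ $x\in B$ with respect to Lebesgue measure in $\R^N$, and a segment is a null set, so you cannot literally ``apply \eqref{preana50} at $q=\dot\xi(t)$'' for a.e.\ $t$. This is repaired in a standard way, e.g.\ by mollifying $u$ (the mollified function satisfies the same inequality up to an error vanishing with the mollification parameter, by linearity in $Du$ and continuity of $L$), or by invoking the chain rule for the Clarke gradient exactly as the paper does in the proof of Lemma \ref{lemsunday}; the paper's own proof of Lemma \ref{preana} sidesteps this by using only the sup--norm gradient bound on convex subsets, which involves the same kind of a.e.-to-everywhere upgrade. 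With that remark added, your proof is complete and the constant has exactly the stated dependence on $a$, $\la$, $L$ and the diameter of $B$.
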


\begin{proof}  We set
\[R = \max \{L(x,q) \mid x \in \overline B, |q|=1\}.\]
Given a compact convex subset $C$ of $\overline B$, we have by \eqref{preana50}
\[ Du(x) \cdot q \leq L(x,q) - \la \min_{ C} u  \qquad \hbox{for a.e. $x \in C$, any $|q|=1$}\]
and consequently
\begin{equation}\label{preana1}
\| Du\|_{\infty,C} \leq R   - \la \min_{C} u.
\end{equation}
It is then enough to show that  $ \min_{\overline B} u$ is bounded from below, when
$u$ varies among the functions satisfying the assumptions. The idea of the proof is to give estimates on balls
of small radius centered at suitable points of $B$, and then to stitch together the information arguing by induction.

 Let $y^\#$ a minimizer of $u$ in $\overline B$, we   assume  to ease notations, but
without  any loss of generality, that
\begin{equation}\label{preana11}
|z - y^\#| = \frac M{2 \la} \leq r,
\end{equation}
where $r$ is the diameter of $B$ and   $M \in \N $,  consequently
\begin{equation}\label{preana111}
M  \leq [ 2 \, \la \,r],
\end{equation}
where $[ \cdot ]$ indicates the integer part.
We consider the segment joining $z$
to $y^\#$, and  select  $M +1$ points $x_0, x_1, \cdots, x_M$ on it with $x_0=z$,
$x_M= y^\#$, and
\[|x_{k+1} - x_k| = \frac 1{2 \la} \qquad\hbox{for $k=0, \cdots M-1$,} \]

 We moreover set
\[C_k = \overline {B(x_k, 1/2 \la)} \cap \overline B \qquad\hbox{for $k= 0, \cdots, M-1$.}\]
We have, according to \eqref{preana1}
\[u(z) - \min_{C_0} u \leq  \frac 1{2 \la} \, (R -  \la \,  \min_{C_0} u)\]
so that
\[ - \frac 12 \, \min_{C_0} u \leq - a + \frac R{2 \la}\]
and consequently
\begin{equation}\label{preana2}
 u(x_1) \geq \min_{C_0} u \geq 2 \, a - \frac R \la.
\end{equation}
We proceed showing by finite  induction on $k$   that
\begin{equation}\label{preana3}
   u(x_k) \geq  \min_{C_{k-1}} u \geq 2^k \, a -  \left (\sum_{j=0}^{k-1} 2^j \right ) \, \frac R\la.
\end{equation}
The formula is valid for $k= 1$, as shown in \eqref{preana2}, we prove  that it is true for $k+1$  assuming \eqref{preana3}. We have
\[u(x_{k}) - \min_{C_k} u \leq  \frac 1{2 \la} \, (R -  \la \,  \min_{C_k} u)\]
and
\begin{eqnarray*}
 - \frac 12 \, \min_{C_k} u &\leq& - u(x_{k}) + \frac R{2 \la}\\
   &\leq& - 2^k \, a +  \left (\sum_{j=0}^{k-1} 2^j \right ) \, \frac R\la + \frac R{2 \la}.
\end{eqnarray*}
which gives
\[u(x_{k+1}) \geq \min_{C_k} u \geq 2^{k+1} \, a -  \left (\sum_{j=0}^{k} 2^j \right ) \, \frac R\la, \]
as was claimed. Taking into account that $x_M = y^\#$ and \eqref{preana11}, \eqref{preana111} we finally get
\begin{eqnarray*}
   \min_{B} u & =&  u(x_M)\emph{} \geq 2^M \, a -   \left (\sum_{j=0}^{M-1} 2^j \right ) \, \frac R\la \geq
2^M \, a -   \left (\sum_{j=0}^{M-1} 2^j \right ) \, 2 \,r \\
&\geq& 2^{2 \la r} \, (a \wedge 0) -  \left (\sum_{j=0}^{[2 \la r]} 2^j \right ) \, r.
\end{eqnarray*}
This gives the assertion.
\end{proof}
\smallskip

We exploit the above lemma to prove:

\smallskip

\begin{Proposition}\label{ana} Given  $z \in \R^N$, $\la >0$,  $a \in \R$, a ball $B$  containing  $z$, there exists $R >1$,  such that
 any Lipschitz function $u$ in $B$ satisfying $u(z) \geq a$ and
 \begin{equation}\label{ana1}
  \la \, u(x) + Du(x) \cdot q \leq L(x,q) \qquad\hbox{ for a.e. $x \in B$, $|q| \leq R$}
\end{equation}
is a subsolution to \eqref{DP} in $B$.
\end{Proposition}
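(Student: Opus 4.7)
The plan is to show that the hypothesis \eqref{ana1}, which only tests $u$ against directions with $|q|\leq R$, in fact implies the full Fenchel-type inequality
\[
\la\, u(x) + Du(x)\cdot q \leq L(x,q) \quad \text{for a.e. } x\in B \text{ and every } q\in\R^N,
\]
which by Fenchel duality and convexity of $H$ is equivalent to $u$ being a viscosity subsolution of \eqref{DP} in $B$. To do so, I will use \eqref{ana1} directly for $|q|\leq R$, and for $|q|>R$ I will compensate the gradient term $Du(x)\cdot q$ by the superlinear growth of $L$ in $q$, after extracting a uniform Lipschitz and sup-norm bound on $u$ from Lemma \ref{preana}.

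First I would note that, assuming $R\geq 1$, the hypothesis \eqref{ana1} in particular yields
$\la u(x)+Du(x)\cdot q \leq L(x,q)$ for a.e. $x\in B$ and every $|q|=1$, which is exactly the hypothesis of Lemma \ref{preana}. Applying that lemma gives a Lipschitz constant $\Lambda=\Lambda(a,\la,L,\mathrm{diam}\,B)$ for $u$ on $B$; combined with $u(z)\geq a$ and the explicit lower bound produced in the proof of Lemma \ref{preana}, this also furnishes a uniform $L^\infty$-bound $K=K(a,\la,L,\mathrm{diam}\,B)$ for $u$ on $\overline B$. Crucially, both $\Lambda$ and $K$ are independent of the particular $u$ satisfying the assumptions.

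Next I would invoke the locally uniform superlinearity \eqref{fenchel1}, which gives
\[
\lim_{|q|\to+\infty}\inf_{x\in\overline B}\frac{L(x,q)}{|q|}=+\infty.
\]
This allows me to choose $R>1$, depending only on $\Lambda$ and $K$ (hence only on $z,\la,a,B$), such that for every $x\in\overline B$ and every $|q|\geq R$,
\[
L(x,q)\geq (\Lambda+1)|q|\geq \Lambda|q|+\la K.
\]
Then for such $q$ and a.e. $x\in B$,
\[
\la\, u(x) + Du(x)\cdot q \leq \la K + \Lambda|q| \leq L(x,q),
\]
which together with \eqref{ana1} on $\{|q|\leq R\}$ establishes the Fenchel inequality for all $q\in\R^N$. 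Taking the supremum in $q$ and recalling $H(x,\cdot)=L(x,\cdot)^*$ yields $\la u(x)+H(x,Du(x))\leq 0$ a.e. in $B$; by convexity of $H$ this is equivalent to $u$ being a viscosity subsolution of \eqref{DP} in $B$.

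The substantive technical input is already absorbed in Lemma \ref{preana}; the only delicate point remaining is to verify that the choice of $R$ can be made in terms of $z,\la,a,B$ alone. This is exactly where the uniformity of $\Lambda$ and $K$ over all admissible $u$ is used, and where the locally uniform character of the superlinearity in \eqref{fenchel1} (rather than merely pointwise superlinearity in $q$) is essential.
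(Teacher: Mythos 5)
Your proof follows the paper's argument for Proposition \ref{ana} essentially step by step: Lemma \ref{preana} supplies a uniform Lipschitz bound, \eqref{fenchel1} lets you choose $R$ so that the superlinearity of $L$ dominates $Du\cdot q+\la u$ for $|q|>R$, and the range $|q|\leq R$ is covered by the hypothesis \eqref{ana1} itself. The only imprecise point is your derivation of the uniform sup-bound $K$: the condition $u(z)\geq a$ and the lower bound extracted from the proof of Lemma \ref{preana} control $u$ only from \emph{below}, so they cannot by themselves yield $\sup_{B}u\leq K$. The bound you need does hold, but it comes from taking $q=0$ in \eqref{ana1}, which gives $\la\,u(x)\leq L(x,0)\leq\max_{\overline B}L(\cdot,0)$ for a.e.\ (hence all) $x\in B$; with that substitution your argument is complete. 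The paper's own write-up, which asserts $\max_B u\leq a+r\,M$ directly from $u(z)\geq a$, is imprecise at exactly the same spot.
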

\begin{proof} If \eqref{ana1} holds true for $|q|=1$ then we know from Lemma \ref{preana} that there exists  an upper bound, denoted by $M$,
of the Lipschitz constants of all functions satisfying the assumptions, and consequently
\[\max_{B} u \leq  a + r \, M,\]
where $r$ denotes the diameter of $B$.
We take $R > 1$ such that
\[ \inf_{x \in B, \, |q| >R} \,  \frac {L(x,q)}{|q|} \geq M + \la \, (a + r \, M),\]
note that this choice is possible in force of \eqref{fenchel1}.
We get
\begin{eqnarray*}
  L(x,q) & \geq& |q| \, (  M + \la \, (a + r \, M))   \\
  &\geq& Du(x) \cdot q + \la \, u(x)
\end{eqnarray*}
 for a.e. $x \in B$, any $q$ with $|q| > R$. This last relation, together with \eqref{ana1}, gives the assertion.
\end{proof}

\medskip

We fix $z \in \R^N$, $\la < \la_z$, and set $ B_0=C_{z,\la}$, see Proposition \ref{extra}.
We further denote by $R_0$ the constant provided by Proposition  \ref{ana}
in correspondence to $B_0$, $z$, $\la$, $a =u_\la(z)$.

We define in the space $C_b(\R^{2n})$  the set
$\G_{\la,z}$ made up by the  $ \Phi \in C_b(\R^{2n})$ for which there exist a  positive constant $\eps$ and a Lipschitz continuous function $u$
in $B_0$ with $u(z) \geq u_\la(z)$ and
\begin{equation}\label{glaz}
  \la u(x) + Du(x) \cdot q \leq \Phi(x,q) - \eps
\end{equation}
for a.e. $x \in B_0$, any $q$ with $|q| \leq R_0$.

\smallskip

\begin{Proposition}\label{predracu} The set $\G_{\la,z}$ is a convex  subset, open with respect to the strict topology (see Appendix  \ref{strict} for the definition of strict topology) .
\end{Proposition}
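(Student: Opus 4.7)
The plan is to handle convexity by a direct interpolation and openness via the fact that the defining inequality \eqref{glaz} only sees $\Phi$ on the compact set $\overline{B_0}\times\overline{B_{R_0}}$, which is exactly the situation the strict topology was introduced to control.

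For convexity, pick $\Phi_1,\Phi_2\in\G_{\la,z}$ with associated Lipschitz functions $u_1,u_2$ on $B_0$ and constants $\eps_1,\eps_2>0$ satisfying \eqref{glaz}. For $t\in[0,1]$ I set $u_t=tu_1+(1-t)u_2$; then $u_t(z)\geq u_\la(z)$ and, taking a convex combination of the two inequalities in \eqref{glaz},
\[
\la u_t(x)+Du_t(x)\cdot q \leq t\Phi_1(x,q)+(1-t)\Phi_2(x,q)-\bigl(t\eps_1+(1-t)\eps_2\bigr)
\]
for a.e.\ $x\in B_0$ and $|q|\leq R_0$, which shows $t\Phi_1+(1-t)\Phi_2\in\G_{\la,z}$.

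For openness, fix $\Phi_0\in\G_{\la,z}$ with witnesses $u_0$ and $\eps_0$. The crucial observation is that the defining condition \eqref{glaz} constrains $\Phi$ only on the compact set $K:=\overline{B_0}\times\overline{B_{R_0}}$, so any perturbation of $\Phi_0$ that is small uniformly on $K$ preserves the same $u_0$ up to halving $\eps_0$. Concretely, I choose a nonnegative continuous function $\varphi\colon\R^{2N}\to\R$ vanishing at infinity with $\varphi\geq 1$ on $K$ (for instance a cut-off of $1$ on a compact neighborhood of $K$). By the description of the strict topology recalled in Appendix \ref{strict}, the set
\[
\mathcal U:=\bigl\{\Phi\in C_b(\R^{2N})\;\big|\;\sup_{(x,q)\in\R^{2N}}\varphi(x,q)\,|\Phi(x,q)-\Phi_0(x,q)|<\eps_0/2\bigr\}
\]
is an open neighborhood of $\Phi_0$ in the strict topology.

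Now for any $\Phi\in\mathcal U$ and a.e.\ $x\in B_0$, $|q|\leq R_0$ one has $|\Phi(x,q)-\Phi_0(x,q)|<\eps_0/2$, hence
\[
\la u_0(x)+Du_0(x)\cdot q \leq \Phi_0(x,q)-\eps_0 \leq \Phi(x,q)-\eps_0/2,
\]
so $\Phi\in\G_{\la,z}$ with the witnesses $u_0$ and $\eps_0/2$. Thus $\mathcal U\subset\G_{\la,z}$, proving openness. The only nontrivial point is choosing the seminorm weight $\varphi$ to be $\geq 1$ on $K$ while still vanishing at infinity, which is immediate since $K$ is compact; this is where the strict topology is genuinely used, as the same argument would fail in, say, the weak$^*$ topology inherited from the measure duality.
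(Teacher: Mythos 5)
Your proof is correct and follows essentially the same route as the paper: both arguments rest on the observation that the defining inequality \eqref{glaz} only constrains $\Phi$ on the compact set $\overline{B_0}\times\overline{B_{R_0}}$, so that uniform smallness of $\Phi-\Phi_0$ there (with the witness $u_0$ and $\eps_0/2$) keeps you inside $\G_{\la,z}$. The only difference is presentational: the paper notes that such a uniform-on-compacta neighborhood is open in the compact--open topology and hence in the finer strict topology, whereas you exhibit the strict-topology seminorm directly by choosing a $C_0$ weight equal to at least $1$ on that compact set; your explicit verification of convexity fills in what the paper calls ``apparent.''
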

\begin{proof} The convexity property is apparent. If $\Phi_0$ satisfies \eqref{glaz},  then any $\Phi$ with
$\|\Phi - \Phi_0\|_{\infty, \overline B_0} < \frac \eps 2$ satisfies
\[\la u(x) + Du(x) \cdot q \leq \Phi_0(x,q) - \eps \leq \Phi(x,q) - \frac \eps 2\]
for a.e. $x \in B_0$, any $q$ with $|q| \leq R_0$. This shows that any such $\Phi$ belong to $\G_{\la,z}$, in addition these elements make up an open
neighborhood of $\Phi_0$  in the compact--open topology and consequently in the strict topology. This shows that $\G_{\la,z}$ is open.

\end{proof}

\smallskip

\begin{Proposition}\label{dracu} Given $\Phi \in  C(\R^{2n})$ bounded from below and possessing a $\lambda$--discounted subsolution $u$
in $\R^N$ with
$u(z) \geq u_\la(z)$, there exists $M_0$ such that $\Phi \wedge M \in \overline{\G_{\la,z}}$ for $M \geq M_0$.
\end{Proposition}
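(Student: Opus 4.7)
My plan is to exhibit an explicit sequence in $\G_{\la,z}$ converging to $\Phi\wedge M$ in the strict topology, using as witness throughout the very subsolution $u$ provided by the hypothesis. Since $u$ is already a global subsolution of $\Phi$, the only issue is to make sure that the truncation $\Phi\wedge M$ still dominates $\la u + Du\cdot q$ on the region $B_0\times B_{R_0}$ that actually matters for membership in $\G_{\la,z}$, and to produce a strict margin by adding a small constant.

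First I would choose $M_0$ so large that truncation by any $M\ge M_0$ has no effect on the relevant compact region. Since $\Phi$ is continuous and $\overline{B_0}\times\overline{B_{R_0}}$ is compact, set
\[
M_0 := \max_{\overline{B_0}\times\overline{B_{R_0}}} \Phi.
\]
For $M\ge M_0$ one then has $\Phi\wedge M \equiv \Phi$ on $\overline{B_0}\times\overline{B_{R_0}}$; together with the hypothesis that $\Phi$ is bounded from below, this also ensures $\Phi\wedge M\in C_b(\R^{2N})$.

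Next I would use $u$ as the subsolution in the definition of $\G_{\la,z}$. By the very definition of $\la$--discounted subsolution, $u$ is locally Lipschitz on $\R^N$, hence Lipschitz on $B_0$, it satisfies $u(z)\ge u_\la(z)$, and
\[
\la u(x)+Du(x)\cdot q \le \Phi(x,q) \qquad\text{for a.e.\ } x\in B_0,\ \text{every } q\in\R^N,
\]
in particular for $|q|\le R_0$. Setting $\Phi_n := (\Phi\wedge M)+\tfrac{1}{n}$, the identity $(\Phi\wedge M)(x,q)=\Phi(x,q)$ on $B_0\times B_{R_0}$ yields
\[
\la u(x)+Du(x)\cdot q \le \Phi(x,q) = \Phi_n(x,q)-\tfrac{1}{n}
\]
for a.e.\ $x\in B_0$ and every $|q|\le R_0$, so that $\Phi_n\in\G_{\la,z}$ with $\eps=1/n$.

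Finally I would observe that $\Phi_n\to\Phi\wedge M$ uniformly on $\R^{2N}$, and invoke the fact that on $C_b(\R^{2N})$ the strict topology is weaker than the uniform one, so $\Phi_n\to\Phi\wedge M$ in the strict topology as well, giving $\Phi\wedge M\in\overline{\G_{\la,z}}$. I do not expect any substantial obstacle here; the only delicate point is the compactness argument that produces $M_0$ so that truncation is inactive on precisely the product $\overline{B_0}\times\overline{B_{R_0}}$ on which the subsolution inequality defining $\G_{\la,z}$ is tested, and this is immediate from continuity of $\Phi$.
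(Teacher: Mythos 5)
Your proof is correct and follows essentially the same route as the paper: use the given subsolution $u$ as the witness, choose $M_0$ by a compactness argument so that the truncation does not disturb the subsolution inequality on $\overline{B_0}\times\overline{B_{R_0}}$, then add a small positive constant to land in $\G_{\la,z}$ and pass to the limit in the strict topology. The only (immaterial) difference is that the paper picks $M_0$ as an upper bound for the left-hand side $\la u+Du\cdot q$ on that compact set, whereas you pick it as an upper bound for $\Phi$ there; both choices work.
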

\begin{proof} We have that
\[ \la u(x) + Du(x) \cdot q \leq \Phi(x,q)\]
for a.e. $ x \in B_0$, any $q$. Since the left hand--side of the above formula is bounded
when  $|q| \leq R_0$, we find $M_0$ such that
\[ \la u(x) + Du(x) \cdot q \leq \Phi(x,q) \wedge M   \qquad\hbox{for $M \geq M_0$, $|q| \leq R_0$, a.e. $x \in B_0$.}\]
This shows that  $\Phi(x,q) \wedge M + \eps$ belongs to $\G_{\la,z}$ for any $\eps$. We therefore get the assertion because
$\Phi(x,q) \wedge M + \eps$ strictly converges to $\Phi(x,q) \wedge M$ as $\eps$ goes to $0$.
\end{proof}

\smallskip
\begin{Proposition}\label{postdracu} There  exists $M_0$ such that $L \wedge M \in \partial \G_{\la,z}$ for $M \geq M_0$.
\end{Proposition}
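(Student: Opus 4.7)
The plan is to split the assertion into two parts: showing $L \wedge M$ lies in the closure of $\G_{\la,z}$, and showing it does not lie in the interior (indeed, not in $\G_{\la,z}$ itself). For the first part I would simply invoke Proposition~\ref{dracu} with $\Phi = L$. Indeed $L$ is bounded from below by \eqref{fenchel2}--\eqref{fenchel13}, and $u_\la$ itself serves as a $\la$--discounted subsolution of $L$ on $\R^N$: the viscosity inequality $\la u_\la + H[u_\la] \leq 0$ is, via Fenchel duality, equivalent to $\la u_\la(x) + Du_\la(x) \cdot q \leq L(x,q)$ for a.e.\ $x$ and every $q\in\R^N$. Since trivially $u_\la(z) \geq u_\la(z)$, Proposition~\ref{dracu} supplies a constant $M_0$ such that $L \wedge M \in \overline{\G_{\la,z}}$ for every $M \geq M_0$.

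For the complementary statement I argue by contradiction. Suppose $L \wedge M \in \G_{\la,z}$ for some $M \geq M_0$; then there exist $\eps>0$ and a Lipschitz function $u$ on $B_0$ with $u(z) \geq u_\la(z)$ such that
\[
\la u(x) + Du(x) \cdot q \leq (L(x,q) \wedge M) - \eps \leq L(x,q) - \eps
\]
for a.e.\ $x \in B_0$ and every $|q| \leq R_0$. Pick $\eta \in (0,\,\eps/\la)$ and set $v := u + \eta$; the function $v$ is Lipschitz in $B_0$, satisfies $v(z) \geq u_\la(z) + \eta > u_\la(z)$, and
\[
\la v(x) + Dv(x) \cdot q \leq L(x,q) - (\eps - \la\eta) \leq L(x,q)
\]
for a.e.\ $x \in B_0$ and $|q| \leq R_0$. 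Since $R_0$ was chosen in correspondence with $a = u_\la(z)$, and since $v(z) \geq a$, Proposition~\ref{ana} applies and ensures that $v$ is a subsolution of \eqref{DP} in $B_0$.

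This produces the desired contradiction. Indeed, by our choice $\la < \la_z$, Proposition~\ref{extra} tells us that $u_\la(z)$ coincides with the maximal subsolution of \eqref{DP} in $B_0 = C_{z,\la}$ evaluated at $z$; hence $v(z) \leq u_\la(z)$, clashing with $v(z) > u_\la(z)$. Therefore $L \wedge M \notin \G_{\la,z}$, and combined with the first step we conclude $L \wedge M \in \partial \G_{\la,z}$ for every $M \geq M_0$.

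The main subtlety, and the reason Section~\ref{locale} was developed in the form it is, lies in reconciling three facts: the strict inequality that participation in $\G_{\la,z}$ demands, the fact that this inequality is imposed only for bounded momenta $|q| \leq R_0$, and the need to compare the resulting $v$ with a global object ($u_\la$). Proposition~\ref{ana}, relying on the Lipschitz control from Lemma~\ref{preana} (which depends only on the lower bound $u_\la(z)$ of $v(z)$), bridges the first two issues, while Proposition~\ref{extra} bridges the third by localizing the value $u_\la(z)$ inside $B_0$.
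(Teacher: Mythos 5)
Your proof is correct and follows the same skeleton as the paper's: membership of $L\wedge M$ in $\overline{\G_{\la,z}}$ comes from Proposition \ref{dracu} applied to $\Phi=L$ (with $u_\la$ as the $\la$--discounted subsolution), and exclusion from the interior comes from combining Proposition \ref{ana} with the localization Proposition \ref{extra}. The one place you genuinely diverge is in how the strictness constant $\eps$ is cashed in. The paper keeps $u$ itself, concludes $u(z)=u_\la(z)$, asserts that $u$ is subtangent to $u_\la$ at $z$, and derives a contradiction from the supersolution property of $u_\la$ together with the strict Fenchel inequality at $z$; that route implicitly needs $u\le u_\la$ near $z$, which does not follow immediately from $u$ being dominated by the maximal subsolution of \eqref{DP} in $B_0$ (a priori one only gets $u(z)\le u_\la(z)$). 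Your variant --- replacing $u$ by $v=u+\eta$ with $0<\eta<\eps/\la$, so that $v$ is still an admissible competitor for Proposition \ref{ana} but now $v(z)>u_\la(z)$ --- contradicts the maximality statement of Proposition \ref{extra} directly, with no pointwise viscosity argument at $z$. This is more elementary, sidesteps the subtangency step entirely, and uses exactly the same preliminary results; it is a clean improvement on the paper's closing step.
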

\begin{proof}
Since $L$ satisfies the assumptions of Proposition \ref{dracu}, we know  that  $L \wedge M \in \overline{\G_{\la,z}}$ for $M$ greater than  or equal to some
$M_0$. It is left to show that $L \wedge M$ cannot be in the interior of $\G_{\la,z}$.  In fact, if this is the case,
there are a Lipschitz continuous function $u$, with $u(z) \geq u_\la(z)$,   and $\eps >0$ such that
 \[L(x,q) - \eps  \geq (L(x,q) \wedge M) - \eps \geq \la \, u(x) + Du(x) \cdot q\]
 for a.e. $x \in B_0$, any $|q| \leq R_0$.   We then deduce from Proposition \ref{ana} that $u$ is  strict subsolution
 to \eqref{DP} in $B_0$.  This implies in view of Proposition \ref{extra}
that $u(z)=u_\la(z)$, and $u$ is subtangent to $u_\la$ at $z$. This is impossible because
\[ L(z,q) \geq p \cdot q + \la \, u(z) + \eps \qquad\hbox{for any $p \in \partial u(z)$, $q \in \R^N$,}\]
while at least for a $p_0 \in \partial u(z)$,  any $q \in \R^N$, we must have by the subtangency condition
\[L(z,q) = p_0 \cdot q + \la \, u(z).\]
\end{proof}

\smallskip

\begin{Lemma}\label{preloca} Let $M$ such that $L \wedge M \in  \overline \G_{\la,z}$,
then any nonzero element  $\mu$ in $- N_{\overline \G_{\la,z}}(L \wedge M)$
belongs to $\PP$, up to a normalization. For any such $\mu$ we have
\[ \langle \mu , L \wedge M \rangle  = \la \, u_\la(z).\]
\end{Lemma}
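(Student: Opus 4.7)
Plan. The strategy is to show positivity and compact support of $\mu$ (which, combined with the bounded-variation property of elements of the topological dual of $C_b$ equipped with the strict topology, allows normalization to a probability measure), establish a discounted closedness identity for $\mu$, and then compute $\langle \mu, L \wedge M\rangle$ via matching bounds.

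For positivity, I would note that adding any $\psi \in C_b(\R^{2N})$ with $\psi \geq 0$ to an element of $\G_{\la,z}$ yields another such element (the defining inequality only becomes easier with the same witness and $\eps$), so if $\Phi_n \in \G_{\la,z}$ converges strictly to $L \wedge M$ then $\Phi_n + \psi$ converges strictly to $L \wedge M + \psi \in \overline{\G_{\la,z}}$; the normal cone condition yields $\langle \mu, \psi\rangle \geq 0$. For the support, I would fix $M$ large enough that $L \wedge M = L$ on $B_0 \times B_{R_0}$ and observe that, for $\psi \in C_b$ nonnegative and vanishing on $B_0 \times B_{R_0}$, the function $L \wedge M - \psi + 1/n$ lies in $\G_{\la,z}$ with witness $u_\la$ and $\eps = 1/n$ (here I use $\la u_\la + Du_\la \cdot q \leq L$ a.e.\ on $B_0 \times B_{R_0}$); strict-limit passage gives $L \wedge M - \psi \in \overline{\G_{\la,z}}$, hence $\langle \mu, \psi\rangle \leq 0$. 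Combined, $\spt \mu \subset \overline{B_0 \times B_{R_0}}$, and since $\mu \neq 0$ is a positive bounded-variation measure, I normalize to $\mu \in \PP$.

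Next, I would prove the central closedness identity
\[ \int (\la \phi + D\phi \cdot q) \, d\mu = \la \phi(z) \qquad \text{for every } \phi \in C^1(\overline{B_0}). \]
To prove it first for $\phi$ with $\phi(z) = 0$, I would construct $\Phi_\eps := L \wedge M + (\la \phi + D\phi \cdot q)\chi + \eps$, where $\chi \in C_c(\R^{2N})$ is a smooth cutoff equal to $1$ on $B_0 \times B_{R_0}$, so that $\Phi_\eps \in C_b(\R^{2N})$. I would verify $\Phi_\eps \in \G_{\la,z}$ using the witness $u_\la + \phi$ (which satisfies $(u_\la + \phi)(z) = u_\la(z)$), the required inequality $\la(u_\la + \phi) + D(u_\la + \phi)\cdot q \leq L + \la\phi + D\phi \cdot q$ on $B_0 \times B_{R_0}$ following from $\la u_\la + Du_\la \cdot q \leq L$. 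The normal cone condition, combined with $\spt \mu \subset \overline{B_0 \times B_{R_0}}$, reduces to $\int (\la \phi + D\phi \cdot q)\, d\mu \geq -\eps$; letting $\eps \to 0$ and repeating with $-\phi$ (also a valid witness since $-\phi(z) = 0$) yields the identity for $\phi(z) = 0$. For general $\phi$, I decompose $\phi = \phi(z) + \tilde\phi$ with $\tilde\phi(z) = 0$ and use that $\mu$ is a probability measure.

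Finally, I compute $\langle \mu, L \wedge M\rangle$. The upper bound follows from the fact that $\Phi \equiv \la u_\la(z) + 2\eps$ belongs to $\G_{\la,z}$ with constant witness $u \equiv u_\la(z)$; the normal cone condition gives $\langle \mu, L \wedge M\rangle \leq \la u_\la(z) + 2\eps$, and $\eps \to 0$ closes the estimate. For the lower bound, I mollify $u_\la$ on $\overline{B_0}$ to obtain $u_n \in C^1$ with $u_n \to u_\la$ uniformly and $Du_n$ uniformly bounded and converging a.e.\ to $Du_\la$; the standard convexity-plus-continuity argument for the convex Hamiltonian $H$ gives $\la u_n + Du_n \cdot q \leq L + \omega_n$ on $\overline{B_0} \times \overline{B_{R_0}}$ with $\omega_n \to 0$. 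Applying the closedness identity to $u_n$ and using $L = L \wedge M$ on $\spt \mu$ yields $\la u_n(z) = \int (\la u_n + Du_n \cdot q)\, d\mu \leq \langle \mu, L \wedge M\rangle + \omega_n$, and passing to the limit gives $\la u_\la(z) \leq \langle \mu, L \wedge M\rangle$. The main technical hurdle will be this last mollification step: the pointwise inequality $\la u_\la + Du_\la \cdot q \leq L$ holds only Lebesgue-a.e.\ whereas $\mu$ may concentrate on null sets, so the standard convex-Hamiltonian mollification is essential to transfer the subsolution property from $u_\la$ to a form integrable against $\mu$.
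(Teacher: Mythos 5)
Your proposal is correct in substance, but it reaches the identity $\langle \mu, L\wedge M\rangle = \la\, u_\la(z)$ by a genuinely different and much heavier route than the paper. The positivity argument and the upper bound $\langle\mu, L\wedge M\rangle\leq \la\, u_\la(z)$ (testing the normal cone against the constant $\la\, u_\la(z)\in\overline{\G_{\la,z}}$) coincide with the paper's. For the lower bound, however, the paper simply observes that $\rho\,(L\wedge M)+(1-\rho)\,\la\, u_\la(z)\in\overline{\G_{\la,z}}$ for every $\rho>1$ (replace a witness $u$ by $\rho\, u+(1-\rho)\,u_\la(z)$, which still satisfies the constraint at $z$ since $\rho\geq 1$), and the normal cone inequality applied to these dilations gives $\langle\mu, L\wedge M\rangle\geq\la\, u_\la(z)$ in one line, with no information needed about $\spt\mu$ and valid for every $M$ with $L\wedge M\in\overline{\G_{\la,z}}$. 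You instead first localize $\spt\mu$ in $\overline{B_0}\times\overline{B_{R_0}}$, prove a discounted closedness identity $\langle\mu,\la\phi+D\phi\cdot q\rangle=\la\,\phi(z)$, and then mollify $u_\la$ to make the subsolution inequality $\mu$-integrable. This works, and it buys extra structure that the paper only establishes later and separately (the support bound is essentially the second half of Corollary \ref{corloca}, and your closedness identity is Proposition \ref{newkeybis} for $(\la,z)$-Mather measures), so nothing downstream is lost; but it is considerably longer than needed for the lemma itself.

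One caveat you should make explicit: both your support argument (with witness $u_\la$) and your mollification step use $L\wedge M=L$ on $\overline{B_0}\times\overline{B_{R_0}}$, hence $M\geq\max_{\overline{B_0}\times\overline{B_{R_0}}}L$. The lemma as stated only assumes $L\wedge M\in\overline{\G_{\la,z}}$, which Proposition \ref{dracu} guarantees already for $M\geq\sup_{B_0\times B_{R_0}}(\la u_\la+Du_\la\cdot q)$, possibly a smaller threshold. Your proof therefore establishes the statement only for $M$ large; since the paper applies the lemma along diverging sequences $M_n$, this does not break the subsequent arguments, but it is a weaker statement than claimed, and the paper's dilation argument avoids the restriction entirely. (Minor further points: the cutoff $\chi$ and the test functions $\psi$ should be arranged on a neighborhood of $\overline{B_0}\times\overline{B_{R_0}}$ rather than on the open product, and for the support claim with general $M$ one should perturb an approximating sequence $\Phi_n\in\G_{\la,z}$ rather than $L\wedge M$ itself, since $L\wedge M$ need not admit a witness directly.)
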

\begin{proof} We know that  $L \wedge M \in \partial \G_{\la,z}$ for $M$ large enough thanks to Proposition \ref{postdracu}.  Since $\G_{\la,z}$
is a convex set  with  nonempty interior  in force of Proposition \ref{predracu}, we deduce  from Proposition \ref{sepa} that
$N_{\overline \G_{\la,z}}(L \wedge M)$ contains nonzero elements.

If one of the elements $\mu$  of $- N_{\overline \G_{\la,z}}(L \wedge M)$ were not positive,
we would find  $\Phi \in C_b(\R^{2N})$, $\Phi \geq 0$ with $\langle \mu, \Phi \rangle < 0$.
This implies that  $L \wedge M +\Phi$  belongs to $ \overline \G_{z,\la}$   and
\[\langle \mu, L \wedge M +\Phi \rangle <  \langle \mu, L \rangle\]
which is in contrast with $-\mu$ belonging to the normal cone at $L \wedge M$.  This proves that $\mu$ is a probability measure,
up to a normalization.
Since   $\Phi \equiv \la \, u_\la(z)$ belongs to $\overline\G_{\la,z}$, we get
\begin{equation}\label{preloca1}
  \langle \mu,  L \wedge M  \rangle  \leq \la \, u_\la(z)
\end{equation}
Since
\[ \rho \, (L \wedge M) + (1- \rho) \la \, u_\la(z) \in \overline\G_{\la,z} \qquad\hbox{ for $\rho >1$}\]
we further get
\[ (1 - \rho) \, \big ( \langle \mu,  L \wedge M \rangle - \la \, u_\la(z) \big ) \leq 0\]
and consequently
\[ \langle \mu , L \wedge M \rangle   \geq \la \, u_\la(z).\]
This  inequality, together with  \eqref{preloca1},  completely gives the assertion.
\end{proof}

\smallskip

\begin{Corollary}\label{corloca} Let $M_0$ be such that $L \wedge M_0 \in \overline \G_{\la,z}$, then
\[  N_{\overline{\G}_{\la,z}}(L \wedge M_0) \supset   N_{\overline\G_{\la,z}}(L \wedge M)  \qquad\hbox{for any $M > M_0$.}\]
Moreover
\[ \langle \mu,L \rangle \leq M_0 \quad\hbox{for $\mu  \in - N_{\overline\G_{\la,z}}(L \wedge M) \cap \PP$, $M \geq M_0$.}\]
\end{Corollary}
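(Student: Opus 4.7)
The plan is to derive both statements directly from Lemma \ref{preloca}, which identifies the nonzero elements of $-N_{\overline\G_{\la,z}}(L\wedge M)$ as (normalized) probability measures $\mu$ satisfying $\langle \mu, L\wedge M\rangle = \la \, u_\la(z)$. Since both assertions amount to comparing $L\wedge M$ with $L\wedge M_0$, the whole mechanism rests on the pointwise inequality $L\wedge M \geq L\wedge M_0$ and on the fact that this inequality is strict precisely on the set $\{L > M_0\}$.

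For the first assertion, I would take $\nu \in N_{\overline\G_{\la,z}}(L\wedge M)$. If $\nu = 0$ it lies trivially in $N_{\overline\G_{\la,z}}(L\wedge M_0)$; otherwise Lemma \ref{preloca} applied to $-\nu \in -N_{\overline\G_{\la,z}}(L\wedge M)$ gives that $-\nu$ is a nonnegative Borel measure, so $\nu$ itself is nonpositive as a measure. For any $\Phi \in \overline\G_{\la,z}$ I split
\[
\langle \nu, \Phi - L\wedge M_0\rangle = \langle \nu, \Phi - L\wedge M\rangle + \langle \nu, L\wedge M - L\wedge M_0\rangle.
\]
The first summand is $\leq 0$ by definition of $\nu \in N_{\overline\G_{\la,z}}(L\wedge M)$, and the second is $\leq 0$ because the nonnegative function $L\wedge M - L\wedge M_0$ is integrated against a nonpositive measure. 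Thus $\nu \in N_{\overline\G_{\la,z}}(L\wedge M_0)$, proving the inclusion.

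For the second assertion, I would take $\mu \in -N_{\overline\G_{\la,z}}(L\wedge M) \cap \PP$ with $M > M_0$. The inclusion just proved, applied with signs reversed, places $\mu$ in $-N_{\overline\G_{\la,z}}(L\wedge M_0)$ as well, so Lemma \ref{preloca} applied to both $L\wedge M$ and $L\wedge M_0$ yields
\[
\langle \mu, L\wedge M\rangle = \la \, u_\la(z) = \langle \mu, L\wedge M_0\rangle,
\]
whence $\langle \mu, (L\wedge M) - (L\wedge M_0)\rangle = 0$. The integrand is nonnegative and strictly positive on $\{L > M_0\}$ (this is where $M > M_0$ is used), so vanishing of its integral against the probability measure $\mu$ forces $\mu(\{L > M_0\}) = 0$. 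Consequently $L \leq M_0$ holds $\mu$-almost everywhere, and $\langle \mu, L\rangle \leq M_0 \, \mu(\R^{2N}) = M_0$.

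The main obstacle, though not deep, is keeping the sign conventions straight: elements of $-N_{\overline\G_{\la,z}}(L\wedge M)$ are positive measures while elements of $N_{\overline\G_{\la,z}}(L\wedge M)$ are negative measures, and it is precisely this sign together with the monotonicity $L\wedge M \geq L\wedge M_0$ that drives both conclusions.
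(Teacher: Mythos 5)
Your proof is correct and follows essentially the same route as the paper's: both assertions rest on Lemma \ref{preloca} (positivity of the nonzero cone elements and the identity $\langle \mu, L\wedge M\rangle = \la\, u_\la(z)$) combined with the pointwise monotonicity $L\wedge M \geq L\wedge M_0$, and the final bound is obtained by showing that $\mu$ gives no mass to $\{L > M_0\}$ --- exactly the paper's support argument phrased in almost-everywhere language. The only remark worth making is that, just like the paper's own proof, your argument for the second assertion genuinely covers only $M > M_0$ and not the borderline case $M = M_0$ allowed in the statement.
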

\begin{proof}  Let $M > M_0$. If $\mu  \in - N_{\overline\G_{\la,z}}(L \wedge M) \cap \PP$ then by Lemma \ref{preloca} and the
very definition of normal cone, we have
\[ \langle \mu ,L \wedge M \rangle = \la \, u_\la(z) \quad\hbox {and} \quad \langle \mu , \Phi \rangle \geq  \la \, u_\la(z) \quad\hbox{for
$\Phi \in \G_{\la,z}$}.\]
This implies that $\langle \mu ,L \wedge M_0 \rangle \geq \la \, u_\la(z)$.  On the other side, since $L \wedge M \geq L \wedge M_0$, the opposite
inequality holds true as well. We deduce that
\[\langle \mu ,L \wedge M_0 \rangle = \la \, u_\la(z),\]
 which in turn implies that
$\mu  \in - N_{\overline\G_{\la,z}}(L \wedge M_0) \cap \PP$ showing the first part of the assertion.
We claim that
\[ \spt \mu \subset \{ (x,q) \mid L(x,q) \leq M_0\} =: W.\]
 If not, there should be $(x_0,q_0) \in \spt \mu$ with $L(x_0,q_0) > M_0$. There thus  should exist a neighborhood
$U$  of $(x_0,q_0)$ with  $\mu(U) >0$ and
\[ M \geq L(x,q) > M_0  \qquad \hbox{for any $(x,q) \in U$ and a some $M > M_0$,} \]
so that
\[\int_U L \wedge M  \, d\mu = \int_U L \, d\mu > \int_U L \wedge M_0  \, d\mu\]
and consequently
\[ \langle \mu ,L \wedge M \rangle >  \langle \mu ,L \wedge M_0 \rangle \]
in contrast to what shown above.  We finally have
\[ \langle \mu ,L \wedge M \rangle = \int_W L \, d\mu \leq M_0.\]

\end{proof}

\smallskip

\smallskip

\begin{Proposition}\label{loca} There is $\mu \in \PP$ such that
\[ \langle  \mu ,\Phi \rangle \geq   \langle \mu, L\rangle = \la \, u_\la(z)\]
for any $\Phi \in C(\R^{2N})$ bounded from below and admitting a $\la$--discounted subsolution $u$ with $u(z) \geq u_\la(z)$.
\end{Proposition}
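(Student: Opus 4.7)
The preparatory work has essentially been done; the task is to assemble the pieces and obtain a \emph{single} measure $\mu$ that works simultaneously for every admissible $\Phi$.

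First I would fix the constant $M_0$ produced by Proposition \ref{postdracu}, so that $L\wedge M_0\in\partial\G_{\la,z}$. Since Proposition \ref{predracu} tells us that $\G_{\la,z}$ is a convex subset of $C_b(\R^{2N})$ which is open with respect to the strict topology (hence has non-empty interior), the separation result of Appendix \ref{strict} (Proposition \ref{sepa}) guarantees the existence of a non-zero element $\nu\in -N_{\overline\G_{\la,z}}(L\wedge M_0)$. By Lemma \ref{preloca} we can normalize $\nu$ to obtain a probability measure $\mu\in\PP$ satisfying
\[
\langle \mu,L\wedge M_0\rangle=\la\,u_\la(z)
\qquad\text{and}\qquad
\langle\mu,\Psi\rangle\ge \la\,u_\la(z)\quad\text{for every }\Psi\in\overline\G_{\la,z}.
\]

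Next I would upgrade $L\wedge M_0$ to $L$ in the equality above. The support argument carried out in the proof of Corollary \ref{corloca} applies verbatim to the present $\mu$: if $\mu$ charged the set $\{L>M_0\}$, one could find $M>M_0$ with $\langle\mu,L\wedge M\rangle>\langle\mu,L\wedge M_0\rangle$, contradicting the fact that $\mu$ also belongs (by the first inclusion in Corollary \ref{corloca}) to $-N_{\overline\G_{\la,z}}(L\wedge M)$ and therefore satisfies $\langle\mu,L\wedge M\rangle=\la u_\la(z)$. Hence $\mu$ is supported in $\{L\le M_0\}$, which gives
\[
\langle\mu,L\rangle=\langle\mu,L\wedge M_0\rangle=\la\,u_\la(z).
\]

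Now let $\Phi\in C(\R^{2N})$ be bounded from below and admit a $\la$-discounted subsolution $u$ with $u(z)\ge u_\la(z)$. By Proposition \ref{dracu} there exists $M_\Phi$ such that $\Phi\wedge M\in\overline\G_{\la,z}$ for every $M\ge M_\Phi$. Applying the normal cone inequality already established for $\mu$ yields
\[
\langle\mu,\Phi\wedge M\rangle\ge \la\,u_\la(z)\qquad\text{for every }M\ge M_\Phi.
\]
Because $\Phi$ is bounded from below, the sequence $\Phi\wedge M$ is bounded below by a constant independent of $M$ and increases monotonically to $\Phi$ as $M\to+\infty$. Monotone convergence, applied to $\Phi\wedge M-\inf\Phi\ge 0$, gives $\langle\mu,\Phi\wedge M\rangle\to\langle\mu,\Phi\rangle\in(-\infty,+\infty]$, and hence
\[
\langle\mu,\Phi\rangle\ge \la\,u_\la(z)=\langle\mu,L\rangle,
\]
which is the desired inequality.

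The only slightly delicate point is the first one, i.e.\ extracting a single $\mu$ that handles every admissible $\Phi$ at once: this hinges on the observation that the normal cone at $L\wedge M_0$ controls \emph{all} of $\overline\G_{\la,z}$, and that the truncations $\Phi\wedge M$ eventually land in $\overline\G_{\la,z}$ by Proposition \ref{dracu}, so that no further selection of measures (depending on $\Phi$) is required. Everything else is either a direct quotation of the preceding lemmas or a routine monotone-convergence passage.
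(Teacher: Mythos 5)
Your overall architecture (separation theorem $\to$ nonzero normal-cone element at $L\wedge M_0$ $\to$ positivity and normalization via Lemma \ref{preloca} $\to$ truncation $\Phi\wedge M$ via Proposition \ref{dracu} and monotone convergence) matches the paper's, but the middle step --- upgrading $\langle\mu, L\wedge M_0\rangle=\la\, u_\la(z)$ to $\langle\mu,L\rangle=\la\, u_\la(z)$ --- has a genuine gap. You invoke ``the first inclusion in Corollary \ref{corloca}'' to place your $\mu\in -N_{\overline\G_{\la,z}}(L\wedge M_0)$ inside $-N_{\overline\G_{\la,z}}(L\wedge M)$ for $M>M_0$, but the Corollary states the opposite inclusion: $N_{\overline\G_{\la,z}}(L\wedge M)\subset N_{\overline\G_{\la,z}}(L\wedge M_0)$. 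Membership in the normal cone at the \emph{higher} truncation level is the stronger condition; for a probability measure it is equivalent to $\langle\mu, L\wedge M\rangle=\la\, u_\la(z)$, i.e.\ to $\mu$ not charging $\{L\wedge M>L\wedge M_0\}$ --- which is exactly the support property you are trying to derive, so the argument is circular. From $\mu\in -N_{\overline\G_{\la,z}}(L\wedge M_0)$ alone you only get $\langle\mu, L\wedge M\rangle\ge\langle\mu,L\wedge M_0\rangle=\la\, u_\la(z)$, hence $\langle\mu,L\rangle\ge\la\, u_\la(z)$ by monotone convergence, and nothing rules out strict inequality (or the value $+\infty$): the normal cone at the lowest admissible truncation is the largest one and may a priori contain measures charging $\{L>M_0\}$.

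The paper closes this gap by working at all truncation levels simultaneously: it picks a diverging sequence $M_n$ with $L\wedge M_n\in\partial\G_{\la,z}$ and measures $\mu_n\in -N_{\overline\G_{\la,z}}(L\wedge M_n)\cap\PP$; Corollary \ref{corloca}, applied in the correct direction, gives $\mu_n\in -N_{\overline\G_{\la,z}}(L\wedge M_j)$ for all $j\le n$, hence $\langle\mu_n, L\wedge M_j\rangle=\la\, u_\la(z)$ for every fixed $j$, together with the uniform bound $\langle\mu_n,L\rangle\le M_1$. Lemma \ref{sub} then yields a narrowly convergent subsequence with limit $\mu$, for which $\langle\mu,L\wedge M_j\rangle=\la\, u_\la(z)$ for every $j$ since $L\wedge M_j$ is bounded continuous, and monotone convergence in $j$ gives the equality $\langle\mu,L\rangle=\la\, u_\la(z)$. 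Your final step (truncating $\Phi$ and passing to the limit) is fine and coincides with the paper's; it is only the attempt to extract a single measure at the single level $M_0$ that does not work as written.
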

\begin{proof} We consider an increasing positively diverging sequence $M_n$ with
\[ L \wedge M_n \in \partial \G_{\la,z} \qquad\hbox{for any $n$,}\]
and $\mu_n \in - N_{\overline\G_{\la,z}}(L \wedge M_n) \cap \PP$. According to Corollary \ref{corloca}, we have
\[\langle \mu, L \rangle \leq M_1.\]
This implies  by Lemma \ref{sub}  that $\mu_n$ narrowly converges  to some $\mu \in \PP$, up to subsequences.
For any fixed $j$, we have that
\[ \mu_n \in - N_{\overline\G_{\la,z}}(L  \wedge M_j)  \qquad\hbox{for  $n \geq j$}\]
and consequently
\[\la \, u_\la(z) = \langle \mu_n, L \wedge M_j \rangle  \qquad\hbox{for $n \geq j$.}\]
We deduce that
\[ \la \, u_\la(z) = \lim_n  \langle \mu_n, L \wedge M_j\rangle =  \langle \mu, L \wedge M_j\rangle\]
and we get by Monotone Convergence  theorem, sending $j$ to infinity
\[  \langle \mu, L \rangle =  \la \, u_\la(z) .\]
If $\Phi$ is an element of $\R^{2N}$ satisfying the properties in  the statement, we have by Proposition \ref{dracu} that
\[ \Phi \wedge M_n \in \overline\G_{\la,z} \qquad\hbox{for $n$ large enough}\]
then
\[ \la \, u_\la(z) \leq  \lim_j  \langle \mu_j, \Phi \wedge M_n \rangle =  \langle \mu, \Phi \wedge M_n\rangle\]
which implies
\[ \langle \mu, \Phi \rangle  \geq  \la \, u_\la(z).\]
This ends the proof.
\end{proof}

\medskip

\begin{proof}[Proof of Theorem \ref{newkey}] Given  $\Phi$, $u$  as indicated in the statement, we have that
\[ \Phi + \la \, (u_\la(z) - u(z)) \]
has a subsolution  coinciding with $ u_\la$ at $z$.
We derive from Proposition  \ref{loca} that there exists $\mu $ with
\[ \langle \mu , \Phi\rangle  + \la \, (u_\la(z) - u(z)) \geq \la \, u_\la(z)\]
and
\[  \langle \mu , L\rangle = \la \, u_\la(z).\]
This gives the assertion.
\end{proof}

\medskip

Given $z \in \R^N$, $\la < \la_z$, we call $(\la,z)$-- Mather measure, any measure $\mu$ satisfying the statement of Theorem \ref{newkey}.
We denote by $\MM_{z,\la}$ the set of such measures $\mu$.

The formula \eqref{rep81} can be seen as an analog, to Hamilton-Jacobi equations,
of the representation of solutions of linear elliptic PDE via Green's kernel
or Poisson integral. In this regard, for $\mu\in \MM_{z,\gl}$
one may call the measure $\gl^{-1}\mu$ a Green--Poisson measure associated with $(\gl,z)$.

\bigskip

\section{Mather measures for the ergodic equation}

We perform in this section a construction parallel to that of Section \ref{matherdisco}
 to show existence of Mather measures for the ergodic equation.

  \smallskip

 The main result is:

\begin{Theorem}\label{keykey} There is $\mu \in \PP$ such that
\[ \langle  \mu ,\Phi \rangle \geq   \langle \mu, L\rangle = 0\]
for any $\Phi \in C(\R^{2N})$ bounded from below and admitting a  subsolution.
\end{Theorem}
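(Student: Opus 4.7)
The proof follows the same strategy as Section \ref{matherdisco}, and is actually technically lighter because the ergodic operator $u\mapsto Du\cdot q$ is linear (no $\la u$ term). I would fix an open ball $B_0$ with $\A\subset B_0$; by Proposition \ref{palle} the critical value of $H$ on $B_0$ is still $0$. The first step is the ergodic analog of Proposition \ref{ana}: there exists $R_0>1$ such that, for every $c\geq 0$, any Lipschitz $u$ on $B_0$ satisfying $Du(x)\cdot q\leq L(x,q)-c$ for a.e.\ $x\in B_0$ and all $|q|\leq R_0$ is a subsolution of $H[u]=-c$ on $B_0$. Indeed, testing at $|q|=1$ gives $|Du|\leq M_1:=\max_{B_0\times\partial B_1}L$, and by the superlinearity \eqref{fenchel1} one may choose $R_0$ so large that the supremum defining $H(x,Du(x))$ is attained in $B_{R_0}$, which makes the inequality on $|q|\leq R_0$ already force $H(x,Du)\leq -c$ everywhere in $B_0$.

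Next, I would introduce the convex set
\[
\G=\{\Phi\in C_b(\R^{2N})\mid\exists\text{ Lip.\ } u\text{ on }B_0,\ \exists\eps>0,\ Du\cdot q\leq\Phi-\eps \text{ a.e.\ } x\in B_0,\ \forall|q|\leq R_0\},
\]
which is convex and open in the strict topology by the argument of Proposition \ref{predracu}. The crux is to show $L\wedge M\in\partial\G$ for all sufficiently large $M$. For $L\wedge M\in\overline\G$, pick a Lipschitz subsolution $u$ of $H[u]=0$ on $B_0$ (which exists by Proposition \ref{palle}); then for $M\geq\max_{B_0\times B_{R_0}}L$ we have $Du\cdot q\leq L=L\wedge M$ on $B_0\times B_{R_0}$, so $L\wedge M+\eps\in\G$ for every $\eps>0$. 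To rule out $L\wedge M\in\Int\G$, suppose otherwise; then $L\wedge M-\eps\in\G$ for some $\eps>0$, producing a Lipschitz $u$ on $B_0$ with $Du\cdot q\leq L-(\eps+\eps')$ on $B_0\times B_{R_0}$, and by the ergodic localization $u$ becomes a strict subsolution of $H[u]=0$ on $B_0$, contradicting Proposition \ref{palle}.

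Given $L\wedge M\in\partial\G$, Proposition \ref{sepa} produces a nonzero $\mu\in -N_{\overline\G}(L\wedge M)$ which, following Lemma \ref{preloca}, is a positive measure normalized to $\PP$ (nonnegativity holds because $\overline\G$ is closed under adding nonnegative functions). The identity $\langle\mu,L\wedge M\rangle=0$ comes from two opposite bounds: the estimate $\leq 0$ from $0\in\overline\G$ (take $u\equiv 0$ and $\eps\downarrow 0$), and the estimate $\geq 0$ from $\rho(L\wedge M)\in\overline\G$ for every $\rho>1$, which I would justify by noting that subsolutions of $\rho L+\eta$ on $B_0$ are exactly $\rho$-multiples of subsolutions of $H[v]=\eta/\rho>0$, and the latter exist as Lipschitz functions on $B_0$ by the supercriticality of $\eta/\rho$ relative to the critical value $0$ on $B_0$. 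Finally, choosing an increasing sequence $M_n\to\infty$ with $L\wedge M_n\in\partial\G$ and corresponding measures $\mu_n$, the Corollary \ref{corloca} type bound $\langle\mu_n,L\rangle\leq M_1$ together with Lemma \ref{sub} delivers a narrow subsequential limit $\mu\in\PP$; monotone convergence yields $\langle\mu,L\rangle=0$, and for every $\Phi$ bounded below admitting a subsolution, $\Phi\wedge M_n\in\overline\G$ for large $n$ by the argument of Proposition \ref{dracu}, giving $\langle\mu,\Phi\rangle\geq 0$. The main obstacle I anticipate is the clean transfer of the strict inequality from $B_0\times B_{R_0}$ to all momenta in the ergodic localization and its compatibility with Proposition \ref{palle}; once that is in place, the remainder is largely bookkeeping along the lines of Section \ref{matherdisco}.
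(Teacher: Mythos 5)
Your proposal is correct and follows essentially the same route as the paper: the same convex open set $\G$ built from the localization result (your ergodic analog of Proposition \ref{ana} is exactly Proposition \ref{barra}), the same use of Proposition \ref{palle} to place $L\wedge M$ on $\partial\G$, the separation theorem to extract a normalized positive measure with $\langle\mu,L\wedge M\rangle=0$, and the same limiting argument as in Proposition \ref{loca}. The paper itself proves Theorem \ref{keykey} largely by reference to Section \ref{matherdisco}, and your write-up fills in those references in the intended way.
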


We call  Mather measure any measure satisfying the statement of Theorem \ref{keykey}. We denote by $\MM$ the set of Mather measures.
In Propositions  \ref{suppo}  and \ref{close}  we  will actually show something more, namely that any measure
$ \mu \in \MM$ is compactly supported and that the inequality $  \langle  \mu ,\Phi \rangle  \geq 0$
holds for any $\Phi$ admitting subsolution.

\smallskip

 We start by:

\begin{Proposition}\label{barra} Given a ball  $B$,
 there exists $R >0 $ such that if a locally Lipschitz function $u$ satisfies
\begin{equation}\label{barra1}
 Du(x) \cdot q \leq L(x,q) - \eps
\end{equation}
for some $\eps >0$,  a.e. $x  \in B$, any $q$ with $|q| \leq R$,  then $u$ is strict  subsolution of $H[u]=0$ in $B$.
\end{Proposition}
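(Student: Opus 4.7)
The strategy mirrors the proof of Proposition \ref{ana}: first extract an a priori Lipschitz bound on $u$ in $B$ from the hypothesis \eqref{barra1} applied with $|q|=1$, then exploit the superlinear growth of $L$ to promote the restricted inequality from $|q|\le R$ to all $q\in\R^N$, with $R$ depending only on the ball $B$.

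First, I would set
\[
M := \max_{x \in \overline{B},\ |q|=1} L(x,q),
\]
which is finite by continuity of $L$ and depends only on $B$ and $L$. Applying \eqref{barra1} to an arbitrary unit vector $q$ and taking the supremum over $|q|=1$ gives $|Du(x)| \le M$ for a.e.\ $x \in B$; in particular $u$ is Lipschitz with constant at most $M$ on $B$. Crucially, $M$ does not depend on $u$ or on $\eps$.

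Next, I would invoke the superlinearity \eqref{fenchel1} of $L$, locally uniformly in $x$, to pick $R \ge 1$, depending only on $B$ (through $M$), so that
\[
\frac{L(x,q)}{|q|} \ge M + 1 \qquad \text{for every } x \in \overline{B}\ \text{and every } q \text{ with } |q| \ge R.
\]
Then for such $q$ and for a.e.\ $x \in B$, the Lipschitz bound yields
\[
Du(x) \cdot q \,\le\, |Du(x)|\,|q| \,\le\, M|q| \,\le\, L(x,q) - |q| \,\le\, L(x,q) - R \,\le\, L(x,q) - 1.
\]
Combined with \eqref{barra1} for $|q|\le R$, this produces
\[
Du(x) \cdot q \,\le\, L(x,q) - \eps' \qquad \text{for a.e. } x \in B \text{ and every } q \in \R^N,
\]
where $\eps' := \min(\eps,1) > 0$. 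Taking the supremum over $q$ and applying Fenchel duality $H(x,p) = \sup_q \big( p \cdot q - L(x,q) \big)$ gives $H(x, Du(x)) \le -\eps'$ for a.e.\ $x \in B$, which is precisely the strict subsolution property.

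I do not anticipate a serious obstacle. The only point that needs care is the order of quantifiers: the radius $R$ must be chosen depending only on $B$ (and on the fixed $L$), never on the particular function $u$ or on its margin $\eps$. This is automatic since the a priori constant $M$ is determined by $B$ and $L$ alone, whence $R$ is too.
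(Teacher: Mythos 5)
Your proof is correct and follows essentially the same route as the paper's: derive the a priori gradient bound $|Du|\le M$ from \eqref{barra1} restricted to unit vectors, then use the locally uniform superlinearity \eqref{fenchel1} to choose $R$ (depending only on $B$ and $L$) so that the inequality self-propagates to all $|q|>R$. The only cosmetic difference is that the paper keeps the sharper bound $|Du|\le M-\eps$ and retains the original margin $\eps$ for large $|q|$, whereas you settle for the margin $\min(\eps,1)$, which is equally sufficient for strictness.
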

\begin{proof} We set $M=  \sup_{x \in B, |q|=1} L(x,q)$.   Exploiting  \eqref{fenchel1}, we can select   $R >1$ with
\begin{equation}\label{barra2}
\inf_{x \in  B, |q|>R} \frac{L(x,q)}{|q|} >M.
\end{equation}
If  \eqref{barra1} holds true  for such an $R$ then
\[ |D u(x)| = Du(x) \cdot \frac{Du(x)}{|Du(x)|} \leq L\left (x, \frac{Du(x)}{|Du(x)|} \right ) - \eps  \qquad\hbox{for a.e. $x \in B$,}\]
which  shows that $|Du(x)|\leq M - \eps  $  in $B$. This in turn implies, in combination with   \eqref{barra2}
\[Du(x) \cdot q \leq (M - \eps)  \, |q| \leq  L(x,q)  - \eps  \]
 for a.e. $x \in B$, any $q$ with $|q| > R$.
This last inequality, together  with \eqref{barra1}, gives the assertion.
\end{proof}

\smallskip

We consider the set $\G$ of  elements $\Phi \in C_b(\R^{2N}) $  such that there exist $\eps  >0 $
and  a Lipschitz continuous function $u$ in $B_0$
with
\begin{equation}\label{dispe}
  Du(x) \cdot q \leq \Phi (x,q) - \eps \qquad\hbox{for a,e, $x \in B_0$, $|q| \leq R_0$,}
\end{equation}
where $B_0$ is an open  ball  containing  $\A$, and so satisfying   Proposition \ref{palle},   and $R_0$ is the constant provided by
Proposition \ref{barra} in correspondence
to $B_0$.

\begin{Proposition} The  set $\G$ is a convex cone with vertex at $0$ open in the strict  topology.
 \end{Proposition}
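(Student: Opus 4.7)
The verification of both claims follows directly from the definition of $\G$, together with the linearity of the differential operator $u \mapsto Du$; the entire argument runs parallel to the proof of Proposition \ref{predracu}, with the discount term $\la u$ erased.

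First I would establish the convex cone property. Suppose $\Phi_1,\Phi_2 \in \G$ with witnessing pairs $(u_1,\eps_1)$, $(u_2,\eps_2)$ satisfying \eqref{dispe}, and let $\alpha_1,\alpha_2 \geq 0$ with $\alpha_1 + \alpha_2 > 0$. The function $u := \alpha_1 u_1 + \alpha_2 u_2$ is Lipschitz continuous on $B_0$, and for a.e. $x \in B_0$ and every $q$ with $|q| \leq R_0$ one has
\[
Du(x)\cdot q \,=\, \alpha_1 Du_1(x)\cdot q + \alpha_2 Du_2(x)\cdot q \,\leq\, (\alpha_1 \Phi_1 + \alpha_2 \Phi_2)(x,q) - (\alpha_1\eps_1 + \alpha_2\eps_2),
\]
with strict slack $\alpha_1\eps_1+\alpha_2\eps_2 > 0$. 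Hence $\alpha_1\Phi_1+\alpha_2\Phi_2 \in \G$, which is exactly the statement that $\G$ is closed under nonnegative linear combinations, i.e.\ a convex cone with vertex at $0$.

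Next I would show openness in the strict topology. Fix $\Phi_0 \in \G$ with witness $(u,\eps)$ and set $K := \overline{B_0}\times\overline{B_{R_0}}$, which is a compact subset of $\R^{2N}$. Consider the neighborhood
\[
U \,:=\, \Bigl\{ \Phi \in C_b(\R^{2N}) \;:\; \sup_{K} |\Phi - \Phi_0| < \tfrac{\eps}{2} \Bigr\}.
\]
For every $\Phi \in U$ the original witness $u$ continues to work, since
\[
Du(x)\cdot q \,\leq\, \Phi_0(x,q) - \eps \,\leq\, \Phi(x,q) - \tfrac{\eps}{2}
\]
for a.e.\ $x \in B_0$ and all $|q| \leq R_0$, so $U \subset \G$. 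Now $U$ is a basic neighborhood of $\Phi_0$ in the compact--open topology and, as recalled in Appendix \ref{strict}, the strict topology on $C_b(\R^{2N})$ is finer than the compact--open topology. Therefore $U$ is strictly open, and $\Phi_0$ lies in the strict interior of $\G$.

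The proof is essentially routine and presents no serious obstacle; the only external ingredient is the standard fact that the strict topology dominates the compact--open topology, which is built into the setup of Appendix \ref{strict} and is what makes the compact--open neighborhood $U$ qualify as strictly open.
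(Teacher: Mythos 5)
Your proof is correct and follows essentially the same route as the paper: the openness argument via the $\eps/2$-neighborhood in $\|\cdot\|_{\infty,K}$ with $K=\overline{B_0}\times\overline{B_{R_0}}$, combined with the fact that the strict topology refines the compact--open one, is exactly the paper's argument. The only difference is that you spell out the convex cone property (which the paper dismisses as ``apparent''), and you do so correctly, including the right restriction $\alpha_1+\alpha_2>0$ so as not to claim $0\in\G$.
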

 \begin{proof} The cone property of $\G$ is apparent.  Given  $\Phi_0 \in \G$  satisfying \eqref{dispe},
 we claim that
 \[ \{ \Phi \mid \| \Phi - \Phi_0\|_{\infty, K} < \eps/2 \} \subset \G,\]
 where $K = \overline B_0 \times \overline B_R$. This will prove  the assertion because the set in the left  hand--side of the above formula is an open neighborhood of $\Phi_0$
 with respect to rhe compact--open topology, and consequently with respect to the strict topology.
 For $\phi$ belonging to it, we in fact have
 \[\Phi(x,q)   \geq \Phi_0(x,q) - \frac \eps 2 \qquad\hbox{for $x \in \overline B_0 $, $q \in \overline  B_{R_0}$},\]
 then
 \[\Phi(x,q) - \frac \eps 2  \geq \Phi_0(x,q) - \eps \geq Du(x) \cdot q  \qquad\hbox{ for a.e. $x \in B_0$, any $q \in B_R$.}\]
 This shows that $\Phi \in \mathcal G$.
 \end{proof}

Arguing as in  Proposition \ref{dracu}, we also get

\begin{Proposition}\label{grigio}  Given $\Phi \in  C(\R^{2n})$ bounded from below and possessing
 a subsolution $u$
in $\R^N$, there exists $M_0$ such that $u  \wedge M \in \overline{\G}$ for $M \geq M_0$.
\end{Proposition}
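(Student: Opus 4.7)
The plan is to mimic the proof of Proposition~\ref{dracu}, now with the discount term $\la u$ absent. Reading the conclusion $u\wedge M\in\overline{\G}$ in parallel with $\Phi\wedge M\in\overline{\G_{\la,z}}$ of Proposition~\ref{dracu}, the strategy is to truncate from above at level $M$ the outer datum sitting next to the $\wedge M$, then to exhibit a strict perturbation belonging to $\G$, with the given subsolution $u$ playing the role of Lipschitz witness, and finally to let the perturbation parameter vanish in the strict topology.

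First I would choose $M_0$. Since $\Phi$ is continuous and $\overline{B_0}\times\overline{B_{R_0}}$ is compact, $\Phi$ attains a finite maximum on this set; denote it by $M_0$. For every $M\geq M_0$ the truncation is vacuous on this compact test set:
\[
\Phi\wedge M\equiv \Phi\qquad\text{on }\ \overline{B_0}\times\overline{B_{R_0}}.
\]
Next I would invoke the subsolution property. Since $u$ is a subsolution for $\Phi$ on $\R^N$ in the sense of Section~6,
\[
Du(x)\cdot q\leq \Phi(x,q)\quad\text{for a.e.\ }x\in\R^N,\ \forall q\in\R^N,
\]
and restricting to $x\in B_0$, $|q|\leq R_0$ and combining with the preceding identity,
\[
Du(x)\cdot q\leq (\Phi\wedge M)(x,q)\quad\text{for a.e.\ }x\in B_0,\ \forall |q|\leq R_0.
\]

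The third step is the strict perturbation. For every $\eps>0$,
\[
Du(x)\cdot q\leq (\Phi\wedge M+\eps)(x,q)-\eps\quad\text{for a.e.\ }x\in B_0,\ \forall |q|\leq R_0,
\]
which is exactly the defining condition for $\Phi\wedge M+\eps\in\G$ with Lipschitz witness $u|_{B_0}$ and strict slack $\eps$. Note that $\Phi\wedge M+\eps\in C_b(\R^{2N})$ because $\Phi$ is bounded from below by hypothesis and capped above by $M+\eps$ after truncation. Finally, as $\eps\downarrow 0$ the family $\Phi\wedge M+\eps$ converges to $\Phi\wedge M$ uniformly on $\R^{2N}$, hence also in the strict topology of $C_b(\R^{2N})$ (see Appendix~\ref{strict}). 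Therefore $\Phi\wedge M\in\overline{\G}$ for every $M\geq M_0$, which delivers the conclusion of Proposition~\ref{grigio}.

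The only delicate point is the first step: one must use precisely the ball $B_0$ (containing $\A$, per Proposition~\ref{palle}) and the radius $R_0$ (from Proposition~\ref{barra}) that are fixed in the very definition of $\G$, so that the test set $\overline{B_0}\times\overline{B_{R_0}}$ is compact and produces a finite $M_0$. Everything else is the strict–perturbation–and–limit trick of Proposition~\ref{dracu}; it is in fact simpler here, because the absence of a discount term removes the need for the alignment identity $u(z)\geq u_\la(z)$, so $u$ may be taken directly as the witness without further normalization.
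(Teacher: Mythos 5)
Your proof is correct and follows essentially the argument the paper intends: Proposition \ref{grigio} is proved there simply by ``arguing as in Proposition \ref{dracu}'', i.e.\ the same truncate--perturb--pass-to-the-limit scheme you carry out (and you rightly read the misprinted conclusion $u\wedge M\in\overline{\G}$ as $\Phi\wedge M\in\overline{\G}$, which is how the proposition is used afterwards). The only cosmetic difference is your choice $M_0=\max_{\overline{B_0}\times\overline{B_{R_0}}}\Phi$, which makes the truncation vacuous on the test set, whereas Proposition \ref{dracu} instead takes $M_0$ as a bound for the left-hand side $Du(x)\cdot q$ on $B_0\times \overline{B_{R_0}}$; the two choices are trivially interchangeable.
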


\smallskip

\begin{Proposition} There  exists $M_0$ such that $L \wedge M \in \partial \G$ for $M \geq M_0$.
\end{Proposition}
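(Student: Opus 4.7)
The proof follows the same two-step pattern as Proposition \ref{postdracu}, but now the contradiction in the second step is cleaner: instead of invoking a subtangency argument, one directly contradicts Proposition \ref{palle}.

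\textbf{Step 1 (membership in the closure).} First I would show $L \wedge M \in \overline{\G}$ for all sufficiently large $M$ by a direct application of Proposition \ref{grigio} with $\Phi = L$. The hypotheses of that proposition are verified as follows: $L$ is bounded from below by \eqref{fenchel2} and \eqref{fenchel13}, and $L$ admits a subsolution in the sense of $\G$, namely any critical solution produced by Proposition \ref{amarilho}. Indeed, if $u$ solves $H[u] = 0$ in $\R^N$, then by Fenchel duality $Du(x) \cdot q - L(x,q) \leq H(x, Du(x)) \leq 0$ for a.e.\ $x$ and every $q$, which is exactly the subsolution inequality relative to $\Phi = L$. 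Proposition \ref{grigio} then yields $M_0$ such that $L \wedge M \in \overline{\G}$ for all $M \geq M_0$.

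\textbf{Step 2 (not in the interior).} Next I would argue by contradiction that $L \wedge M \notin \Int \G$. Suppose $L \wedge M$ were interior; then in particular $L \wedge M \in \G$, so by definition there exist $\eps > 0$ and a Lipschitz continuous function $u$ on $B_0$ with
\[
Du(x) \cdot q \leq (L(x,q) \wedge M) - \eps \leq L(x,q) - \eps
\]
for a.e.\ $x \in B_0$ and every $q$ with $|q| \leq R_0$. Since $R_0$ is precisely the radius supplied by Proposition \ref{barra} for the ball $B_0$, that proposition promotes $u$ to a \emph{strict} subsolution of $H[u] = 0$ in $B_0$. This contradicts Proposition \ref{palle}: because $B_0 \supset \A$, the critical value on $B_0$ is $0$, so no strict subsolution of $H[u] = 0$ can exist in $B_0$. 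Therefore $L \wedge M \in \overline{\G} \setminus \Int \G = \partial \G$, giving the claim.

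\textbf{Main obstacle.} The substantive content is already packaged into the two supporting results: Proposition \ref{barra} upgrades an inequality tested only on $|q| \leq R_0$ to a genuine strict subsolution on all of $B_0$, while Proposition \ref{palle} is what rules out such a strict subsolution. The only subtlety worth flagging is the choice of $B_0$: the set $\G$ is built with a \emph{fixed} ball $B_0$ containing $\A$ and the \emph{fixed} radius $R_0$ attached to it, and it is crucial that both conditions—"critical value $0$ on $B_0$" from Proposition \ref{palle} and "the implication from $|q| \leq R_0$ to all $q$" from Proposition \ref{barra}—are simultaneously available for this very $B_0$. Once this is noted, the argument is essentially a one-line contradiction. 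The threshold $M_0$ delivered in Step~1 (membership in the closure) is the one we keep; the bound needed for Step~2 is automatic.
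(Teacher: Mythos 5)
Your proof is correct and is essentially identical to the paper's: membership of $L\wedge M$ in $\overline{\G}$ is obtained from Propositions \ref{amarilho} and \ref{grigio}, and exclusion from $\G$ follows because Proposition \ref{barra} would otherwise produce a strict subsolution of $H[u]=0$ in $B_0$, contradicting Proposition \ref{palle}. The extra care you take about the fixed pair $(B_0,R_0)$ is a fair observation but does not change the argument.
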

\begin{proof} By Propositions \ref{amarilho} and \ref{grigio}, we have that  $ L  \wedge  M \in \overline \G$ for $M$ suitably large,
 on the other hand  $L \wedge M $ cannot be in $\G$ otherwise by Proposition  \ref{barra} $H[u]=0$ should admit a
 strict subsolution in $B_0$, which is against  Proposition \ref{palle}.
\end{proof}

We derive arguing as in Corollary \ref{corloca}

\begin{Corollary} Let $M_0$ be such that $L \wedge M_0 \in \overline \G$, then
\[  N_{\overline\G}(L \wedge M_0) \supset   N_{\overline\G}(L \wedge M)  \qquad\hbox{for any $M > M_0$.}\]
Moreover
\[ \langle \mu,L \rangle \leq M_0 \quad\hbox{for $\mu  \in - N_{\overline\G}(L \wedge M) \cap \PP$, $M \geq M_0$.}\]
\end{Corollary}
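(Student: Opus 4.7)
The plan is to mimic the proof of Corollary \ref{corloca} almost line by line, replacing the discounted identity $\langle \mu, L \wedge M \rangle = \la \, u_\la(z)$ by its ergodic counterpart $\langle \mu, L \wedge M \rangle = 0$. The main preliminary step is therefore to prove the ergodic analog of Lemma \ref{preloca}: for any nonzero $\mu \in -N_{\overline\G}(L \wedge M)$, one has $\mu \in \PP$ up to normalization, and $\langle \mu, L \wedge M \rangle = 0$.

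Positivity of such a $\mu$ is obtained exactly as in Lemma \ref{preloca}: were $\mu$ not positive, there would exist $\Phi \in C_b(\R^{2N})$, $\Phi \geq 0$, with $\langle \mu, \Phi \rangle < 0$; since $L \wedge M + \Phi \in \overline{\G}$, this would contradict $-\mu \in N_{\overline\G}(L \wedge M)$. For the identity itself, the cone property of $\G$ (with vertex at the origin) gives $0 \in \overline\G$ and $\rho(L \wedge M) \in \overline\G$ for every $\rho > 0$. Testing the normal cone inequality $\langle \mu, \Phi \rangle \geq \langle \mu, L \wedge M \rangle$ for $\Phi \in \overline\G$ against $\Phi = 0$ yields $\langle \mu, L \wedge M \rangle \leq 0$, while testing against $\Phi = \rho(L \wedge M)$ for $\rho > 1$ yields $(\rho - 1)\langle \mu, L \wedge M \rangle \geq 0$, hence $\langle \mu, L \wedge M \rangle \geq 0$, so equality holds.

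Granted this identity, the normal cone inclusion follows verbatim from the discounted proof. Let $M > M_0$ and $\mu \in -N_{\overline\G}(L \wedge M) \cap \PP$. Since $L \wedge M_0 \in \overline\G$, the normal cone property at $L \wedge M$ gives $\langle \mu, L \wedge M_0 \rangle \geq \langle \mu, L \wedge M \rangle = 0$; conversely, $L \wedge M_0 \leq L \wedge M$ together with $\mu \geq 0$ forces $\langle \mu, L \wedge M_0 \rangle \leq 0$. Combining, $\langle \mu, L \wedge M_0 \rangle = \langle \mu, L \wedge M \rangle = 0$, and the defining inequality of the normal cone at $L \wedge M$ rewrites as the corresponding inequality at $L \wedge M_0$, so that $\mu \in -N_{\overline\G}(L \wedge M_0)$; passing from nonzero $\mu$ back to arbitrary elements of $N_{\overline\G}(L \wedge M)$ via scaling yields the stated inclusion (the zero element being in both cones automatically).

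For the bound $\langle \mu, L \rangle \leq M_0$, I would show that $\spt \mu \subset W := \{(x,q) \mid L(x,q) \leq M_0\}$ exactly as in Corollary \ref{corloca}. If some $(x_0,q_0) \in \spt \mu$ had $L(x_0, q_0) > M_0$, a small open neighborhood $U$ of $(x_0,q_0)$ with $\mu(U) > 0$ and $M \geq L > M_0$ on $U$ would give $\int_U L \wedge M \, d\mu > \int_U L \wedge M_0 \, d\mu$, contradicting $\langle \mu, L \wedge M \rangle = \langle \mu, L \wedge M_0 \rangle$. Since $L = L \wedge M_0 \leq M_0$ on $W \supset \spt \mu$, we conclude $\langle \mu, L \rangle = \int_W L \, d\mu \leq M_0 \, \mu(W) = M_0$. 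The only substantive departure from a mechanical transcription of the discounted argument is the preliminary identity, and this is genuinely simpler in the ergodic setting thanks to the cone structure of $\G$, so no real obstacle is expected.
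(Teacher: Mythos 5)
Your proposal is correct and follows exactly the route the paper intends: the paper gives no argument beyond ``arguing as in Corollary \ref{corloca}'', and you carry out precisely that transcription, correctly supplying the one genuinely new ingredient, namely the ergodic analog of Lemma \ref{preloca} (positivity of $\mu$ and $\langle \mu, L\wedge M\rangle =0$), via the cone structure of $\G$ in place of the constant function $\la\, u_\la(z)$. The minor blemishes you inherit (the support argument really needs $M>M_0$ strictly, and the claim $L\le M$ on $U$ is superfluous) are present verbatim in the paper's own Corollary \ref{corloca} and do not affect the result.
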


We finally get  Theorem \ref{keykey} with the same argument as in Proposition \ref{loca}.

\bigskip

\section{Properties of Mather measures}
\begin{Proposition}\label{newkeybis}  Given  $z \in \R^N$, $\la < \la_z$ we have
\[\langle \mu, \la \, \psi +  D\psi \cdot q\rangle =  \la \, \psi(z)\]
for any $(\la,z)$--Mather measure $\mu$, $\psi \in C^1(\R^N)$, constant outside a compact subset.
\end{Proposition}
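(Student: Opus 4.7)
The plan is to apply Theorem \ref{newkey} twice — once with the test function built from $\psi$ and once with that built from $-\psi$ — and then pass to a limit to collapse a small regularizing term. The key idea is that $\psi$ itself satisfies the $\la$-discounted inequality $\la\psi + D\psi\cdot q \leq \la\psi(x) + D\psi(x)\cdot q$ with equality, so $\psi$ is trivially a subsolution for the continuous function $(x,q)\mapsto \la\psi(x)+D\psi(x)\cdot q$. The obstacle is that this function is not bounded from below, since on the compact support of $D\psi$ the term $D\psi(x)\cdot q$ is linear and unbounded in $q$, so Theorem \ref{newkey} does not apply directly.

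To fix this, introduce a small regularization. Let $m=\min L$ (finite by \eqref{fenchel2}--\eqref{fenchel13}) and for $\beta>0$ set
\[
\Phi_\beta(x,q):=\la\psi(x)+D\psi(x)\cdot q+\beta\bigl(L(x,q)-m\bigr).
\]
Since $\beta(L-m)\geq 0$, the function $\psi$ is still a $\la$-discounted subsolution of $\Phi_\beta$. Moreover $\Phi_\beta$ is bounded from below: $\psi$ is bounded, $D\psi$ is compactly supported and bounded, so the linear-in-$q$ contribution is absorbed by $\beta L$ thanks to the superlinear growth \eqref{fenchel1} restricted to the compact set $\spt D\psi$. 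Applying Theorem \ref{newkey} to the $(\la,z)$-Mather measure $\mu$, and using that $\mu\in\PP$ together with $\langle\mu,L\rangle=\la u_\la(z)$, we get
\[
\langle\mu,\la\psi+D\psi\cdot q\rangle+\beta\bigl(\la u_\la(z)-m\bigr)\geq\la\psi(z).
\]
Letting $\beta\to 0^+$ yields $\langle\mu,\la\psi+D\psi\cdot q\rangle\geq\la\psi(z)$.

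For the reverse inequality, repeat the argument replacing $\psi$ by $-\psi$: the function $-\psi$ is a $\la$-discounted subsolution of
\[
\Phi_\beta^{-}(x,q):=-\la\psi(x)-D\psi(x)\cdot q+\beta\bigl(L(x,q)-m\bigr),
\]
which is likewise bounded from below by the same argument. Theorem \ref{newkey} gives
\[
-\langle\mu,\la\psi+D\psi\cdot q\rangle+\beta\bigl(\la u_\la(z)-m\bigr)\geq -\la\psi(z),
\]
and sending $\beta\to 0^+$ produces $\langle\mu,\la\psi+D\psi\cdot q\rangle\leq\la\psi(z)$. Combining the two inequalities gives the claimed equality. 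Finiteness of $\langle\mu,D\psi\cdot q\rangle$ used in the limit follows because $|D\psi(x)\cdot q|\leq C|q|\mathbf{1}_{\spt D\psi}(x)$ is dominated by $L$ up to additive constants via the superlinearity \eqref{fenchel1}, and $\langle\mu,L\rangle$ is finite.
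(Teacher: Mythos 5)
Your proof is correct and follows essentially the same route as the paper: the authors likewise regularize $\pm(\la\psi + D\psi\cdot q)$ by adding a small nonnegative multiple of a bounded-from-below modification of $L$ (they use $\tfrac1n(L\vee 0)$ where you use $\beta(L-m)$), apply Theorem \ref{newkey} to both $\psi$ and $-\psi$, and send the regularization parameter to zero using the finiteness of $\langle\mu,L\rangle$. The two arguments differ only in this cosmetic choice of regularizer.
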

\begin{proof}
We define
\begin{eqnarray*}
  \overline L_n(x,q) &=& \lambda\, \psi(x) +  D \psi(x) \cdot q  +  \frac 1n \,( L(x,q) \vee 0) \\
 \underline L_n(x,q) &=&  -\lambda \, \psi(x) -  D\psi(x) \cdot q +  \frac 1n \,( L(x,q) \vee 0).
\end{eqnarray*}
It is clear that both $\overline L_n(x,q)$, $\underline L_n(x,q)$  are bounded from below
and the  functions $ \pm \psi$ are $\la$--discounted subsolution for  $\overline L_n$, $ \underline L_n$, respectively.
  We then  derive from Theorem \ref{newkey} that
\[ \langle \mu, \overline L_n \rangle \geq \la \, \psi(z)  \qquad\hbox{and} \qquad    \langle \mu, \underline L_n \rangle  \geq - \la \,  \psi(z),\]
which implies
\[ |\langle  \mu,  D\psi \cdot q \rangle + \lambda \, \langle \mu, \psi \rangle - \la \,  \psi(z)| \leq \frac 1n \, \langle \mu,( L(x,q) \vee 0) \rangle.
\]
Taking into account that  $\langle \mu , L \vee 0\rangle$ is finite because $L \vee 0$ is a compact perturbation of $L$, we further deduce
 sending $n$ to infinity.
  \[  \langle  \mu, \la \, \psi + D \psi  \cdot q  \rangle  = \la \,  \psi(z).\]
\end{proof}

\smallskip

\begin{Proposition}\label{suppo}   Any  $\mu \in \MM$ is compactly supported. More precisely there  exists $M >0$ such that satisfying
\[  \spt \mu \subset \A \times B_M \qquad\hbox{for any $\mu \in \MM$ } \]

\end{Proposition}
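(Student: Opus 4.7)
The plan is to mirror the argument of Proposition \ref{newkeybis} in the ergodic setting and then exploit a critical subsolution strict outside $\A$ to localize $\spt\mu$. First I would establish the orthogonality relation
$$\langle \mu, D\psi \cdot q \rangle = 0 \qquad \text{for every } \psi \in C^1(\R^N) \text{ constant outside a compact set,}$$
by the same bracketing trick as in Proposition \ref{newkeybis}: apply Theorem \ref{keykey} to $\Phi_n^\pm(x,q) := \pm D\psi(x) \cdot q + \tfrac{1}{n}(L(x,q) \vee 0)$, which are bounded from below and admit $\pm \psi$ as subsolutions. Passing to the limit $n \to \infty$, using that $\langle \mu, L \vee 0\rangle$ is finite since $L \vee 0$ differs from $L$ only where $L<0$ (a relatively compact set on which the difference is bounded), yields the claimed identity.

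Next I would select a $C^1$ subsolution $u$ of $H[u] = 0$ that is constant outside a compact set and \emph{strict outside} $\A$, in the sense that $H(x, Du(x)) < 0$ for every $x \in \R^N \setminus \A$. Such a function exists by combining a Fathi--Siconolfi regularization of critical subsolutions (which produces smoothness together with strictness outside $\A$ from the very definition of the Aubry set) with the truncation argument of Proposition \ref{prescorcia}, followed by a mollification to restore $C^1$ regularity. The Fenchel inequality then yields
$$L(x,q) - Du(x) \cdot q \geq - H(x, Du(x)) \geq 0 \qquad \text{for every } (x,q) \in \R^{2N},$$
with strict positivity on any compact subset of $(\R^N \setminus \A) \times \R^N$. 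Since $\langle \mu, L \rangle = 0$ by Theorem \ref{keykey} and $\langle \mu, Du \cdot q \rangle = 0$ by the orthogonality relation applied to $u$, the nonnegative integrand above has vanishing $\mu$-mean. Inner regularity of $\mu$ then rules out positive mass on the open set $(\R^N \setminus \A) \times \R^N$, so $\spt \mu \subset \A \times \R^N$.

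The vanishing of $L(x,q) - Du(x) \cdot q$ at $\mu$-a.e. point of the support gives $L(x,q) = Du(x) \cdot q \leq B|q|$ there, where $B := \|Du\|_\infty$. Coupling this bound with the superlinear growth \eqref{fenchel1} on an open ball containing the compact set $\A$, one extracts a constant $M$, depending only on $u$ and $L$, such that $|q| \leq M$ on $\spt \mu$; thus $\spt \mu \subset \A \times \overline{B}_M$ uniformly in $\mu \in \MM$. The step I expect to be the main obstacle is the joint construction of the auxiliary subsolution $u$: strictness outside $\A$ demands a Fathi--Siconolfi type smoothing of the Lipschitz strict subsolution provided by the definition of the Aubry set, while Proposition \ref{prescorcia} delivers compact-support modifications only in the Lipschitz category, so the two features must be reconciled by mollifying on an appropriate open covering while preserving both strictness outside $\A$ and constancy outside a compact set.
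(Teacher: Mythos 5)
Your proposal is correct in outline but follows a genuinely different route from the paper's. The paper never constructs a global strict subsolution and never invokes closedness of $\mu$: it argues by contradiction directly from the defining inequality $\langle \mu,\Phi\rangle\geq 0$, perturbing $L$ downward by a small bump supported near a putative point of $\spt\mu$ outside $\A\times B_M$, and checking --- via the purely local statement of Proposition \ref{lui!!} for the $x$-marginal, and via the Lipschitz bound \eqref{suppo3} together with \eqref{fenchel1} for the $q$-component --- that the perturbed Lagrangian still admits a subsolution while having strictly negative integral. You instead first derive the closedness relation $\langle\mu, D\psi\cdot q\rangle=0$ (which the paper establishes only afterwards, in Proposition \ref{closed}, precisely because its proof of the present statement does not need it) and then run an Evans-style complementarity argument, $0=\langle\mu, L-Du\cdot q\rangle$ with $L-Du\cdot q\geq -H(\cdot,Du)\geq 0$; this is elegant and delivers the localization in $x$ and the bound in $q$ in one stroke. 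The price is the auxiliary function $u$: a $C^1$ critical subsolution, strict outside $\A$ and constant outside a compact set. Be aware that this is not obtainable by ``a mollification'' after the truncation of Proposition \ref{prescorcia}: mollifying a Lipschitz subsolution of a convex equation only produces a subsolution of $H=\omega(\delta)$ for some modulus $\omega$, and on $\A$ there is no strictness left to absorb that error, so you genuinely need the Fathi--Siconolfi $C^1$-regularization theorem (applied to a Lipschitz subsolution that is already strict outside $\A$ and constant near infinity, where it can be left untouched). Since that theorem is available under (A1)--(A2) and is cited in the paper, your argument can be closed, but it imports a substantially heavier tool than the paper's self-contained local-perturbation proof, which needs only the pointwise Proposition \ref{lui!!}. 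You correctly identified this construction as the main obstacle; the paper's proof is worth studying precisely because it shows how to avoid it.
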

\begin{proof}
Let $\mu$ be a Mather measure,  we first prove that the support of the first marginal of $\mu$, denoted by $\mu_1$,
 is contained in the Aubry set, which is compact in force of Proposition \ref{lui!}. Assume by contradiction that there exists
 $y \in \spt \, \mu_1 \setminus \A$. This means that
 \[ \mu_1(U)= \mu(U \times \R^N) >0 \quad\hbox{for any neighborhood $U$ of $y$.}\]
 By Proposition \ref{lui!!} there exists $\eps >0$, a neighborhood $U_0$  of $y$  in $\R^N$,
 and a locally Lipschitz continuous function $v: \R^N \to \R$ with
 \begin{eqnarray}
   Dv(x) \cdot q  &\leq & L(x,q)  \qquad\hbox{a.e. in $\R^N$}  \label{suppo1}\\
    Dv(x) \cdot q   &\leq & L(x,q)   - \eps \qquad\hbox{a.e. in $U_0$}  \label{suppo2}
 \end{eqnarray}
We define
\[ \overline L(x,q) = L(x,q)  - \rho(x),\]
where $\rho$ is a continuous nonnegative function  supported in $U_0$ with $\max \rho = \eps$. We derive from  \eqref{suppo1},
\eqref{suppo2}
that $ \overline L$ admits  $v$ as subsolution   and is in addition bounded from below. On the other side, we get
\[ \langle \mu,  \overline L \rangle =  \langle \mu,  L \rangle  - \int_{U_0} \rho \, d \mu_1 < 0,\]
in contrast with  the definition of Mather measure.
We have therefore found that the projection of $\spt \,\mu$  with respect to the first component  is contained
in $\A$. Let $B$ a ball in $\R^N$ containing  $\A$. We set
\begin{equation}\label{suppo3}
 R = \sup \{ |p| \mid H(x,p) \leq 0,\, x \in B\},
\end{equation}
then $R$ is a Lipschitz constant in $B$ for any subsolution to $H[u]=0$. According to \eqref{fenchel1},  we can further choose a positive constant $M$ with
\[ L(x,q) > R \, |q| \qquad\hbox{for any $x \in B$, $|q| > M-1$}\]
We claim that
\[\spt \, \mu \subset \A \times B_{M}.\]
In fact, assume for purposes of contradiction that there is $(y_0,q_0) \in \spt \, \mu$ with $y_0 \in \A$, $|q_0| \geq M$.
 We take a neighborhood $W$ of $(y_0,q_0)$
in $\R^N \times \R^N$  with $W \subset B \times \{ |q| > M -1\}$ such that
\begin{equation}\label{suppo4}
   L(x,q) > R \, |q| + \eps \qquad\hbox{for any $(x,q) \in W$, some $\eps >0$.}
\end{equation}
We proceed defining
\[\widetilde L(x,q) = L(x,q)  - \widetilde \rho(x,q),\]
where $\widetilde\rho$ is a continuous nonnegative function  supported in $W$ with $\max \widetilde\rho = \eps$. Due to
\eqref{suppo3}, \eqref{suppo4}, we see that  any subsolution for $L $ is still a subsolution
 for $\widetilde L$, and $\widetilde L$  is bounded from below. With the same computations as in the first part of the proof,
  we find that
 \[\langle \mu, \widetilde L\rangle < 0\]
 which is impossible.

\end{proof}
\smallskip

Looking back to the proof of the previous  proposition, we realize that the argument actually shows a more general property.

\begin{Corollary}\label{postsuppo} Let $\mu \in \PP$ such that $\langle \mu, L \rangle = 0$ and
$\langle \mu, \Phi \rangle\geq 0$ for all $\Phi$  admitting subsolutions such that
\begin{equation}\label{postsuppo1}
 \Phi(x,q)= L(x,q) \qquad\hbox{ in $(\R^N  \times \R^N ) \setminus K $, with $K \subset \R^{2N}$ compact.}
\end{equation}
Then $\mu$ is compactly supported.
\end{Corollary}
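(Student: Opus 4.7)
My plan is to retrace the two-step argument of Proposition \ref{suppo}, with the single change that both perturbations of $L$ used there must be made compactly supported in $\R^{2N}$, not merely in the $x$-variable, so that they belong to the restricted class of test Lagrangians allowed by hypothesis \eqref{postsuppo1}. Once this is arranged, the same contradictions will force $\spt \mu_1 \subset \A$ and then $\spt\mu$ bounded in $q$; since $\A$ is compact by Proposition \ref{lui!}, compact support of $\mu$ follows at once.

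For the first step, I would suppose for contradiction that $y \in \spt \mu_1 \setminus \A$ and, via Proposition \ref{lui!!}, produce a neighborhood $U_0$ of $y$ and a locally Lipschitz function $v$ with $Dv \cdot q \leq L(x,q)$ a.e.\ and $Dv\cdot q \leq L(x,q)-\eps$ on $U_0$. In place of the function $\rho(x)$ used in Proposition \ref{suppo}, I would take
\[
\rho(x,q) = \rho_1(x)\,\chi(q),
\]
where $\rho_1$ is continuous, nonnegative, supported in $U_0$, with maximum $\eps$, and $\chi \in C_c(\R^N)$ satisfies $0 \leq \chi \leq 1$ with $\chi \equiv 1$ on $B_M$. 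Then $\overline L := L - \rho$ coincides with $L$ off the compact set $\overline{U_0} \times \spt\chi$, is bounded below, and still admits $v$ as subsolution (since $\rho \leq \eps$). Applying the hypothesis to $\overline L$ would give $\langle \mu,\rho\rangle \leq 0$, whereas by monotone convergence as $M \to \infty$ one has $\langle \mu,\rho\rangle \to \int \rho_1 \,d\mu_1$, which is strictly positive because $\rho_1(y)=\eps$ and $y\in\spt\mu_1$; contradiction.

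The second step can be copied essentially verbatim from Proposition \ref{suppo}, since the perturbation $\widetilde\rho$ constructed there is already supported in a bounded neighborhood $W$ of $(y_0,q_0)$, hence compactly supported in $\R^{2N}$. One uses that all subsolutions of $H[u]=0$ (which exist by Proposition \ref{amarilho}) are Lipschitz with a common constant $R$ on the ball $B \supset \A$, and this keeps them as subsolutions of $L-\widetilde\rho$ on $W \subset B \times \{|q|>M-1\}$; the hypothesis then yields $\langle\mu,\widetilde\rho\rangle\leq 0$, contradicting $(y_0,q_0)\in\spt\mu$. I expect the only real obstacle to be the first step, specifically ensuring via the cutoff $\chi$ that a possible concentration of $\mu$ at large $|q|$ over $U_0$ does not sabotage the positivity of $\langle\mu,\rho\rangle$; the monotone convergence argument just sketched is what makes this work.
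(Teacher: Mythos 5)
Your proposal is correct and follows the paper's own route: the paper proves this corollary simply by remarking that the argument of Proposition \ref{suppo} already establishes it, and you retrace exactly that two-step perturbation argument. Your cutoff $\chi(q)$ in the first step is in fact a welcome refinement rather than a detour, since the perturbation $L-\rho(x)$ used in Proposition \ref{suppo} differs from $L$ on the noncompact set $U_0\times\R^N$ and so does not literally satisfy \eqref{postsuppo1}; your monotone-convergence passage $M\to\infty$ is precisely what makes the paper's one-line justification rigorous.
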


\smallskip

\begin{Corollary}\label{lemclose} The set $\MM$ is a nonempty compact subset of the space of Radon measures endowed with the narrow topology.
\end{Corollary}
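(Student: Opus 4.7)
The plan is to establish compactness of $\MM$ by checking separately that it is nonempty, relatively narrowly compact, and narrowly closed in $\PP$, from which the conclusion follows at once.

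Nonemptiness is immediate from Theorem \ref{keykey}. For relative compactness, I would note that every $\mu \in \MM$ satisfies $\langle \mu, L\rangle = 0$, so $\MM$ is contained in the sublevel set $\mathcal V_0 = \{\mu \in \PP \mid \langle \mu, L\rangle \leq 0\}$, which is narrowly compact by Lemma \ref{sub} (and nonempty, since $\MM$ is). Hence $\MM$ is relatively narrowly compact. As an alternative route, the uniform compact support provided by Proposition \ref{suppo} already yields uniform tightness, which gives relative narrow compactness via Prokhorov's theorem.

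For closedness, let $\mu_n \in \MM$ narrowly converge to some $\mu \in \PP$. The crucial ingredient is Proposition \ref{suppo}, which furnishes a compact set $K := \A \times \overline B_M$ containing the supports of all $\mu_n$. Since $K$ is closed, the Portmanteau theorem gives $\mu(K) \geq \limsup_n \mu_n(K) = 1$, so $\mu$ is supported in $K$ as well. For any $f \in C(\R^{2N})$, I would extend $f|_K$ by Tietze to a bounded continuous function $\tilde f$ on $\R^{2N}$; since all the $\mu_n$ and $\mu$ are supported in $K$, integrals against $f$ and $\tilde f$ agree, and narrow convergence gives
\[
\langle \mu_n, f\rangle = \langle \mu_n, \tilde f\rangle \to \langle \mu, \tilde f\rangle = \langle \mu, f\rangle.
\]
Applying this to $f = L$ yields $\langle \mu, L\rangle = \lim_n 0 = 0$, and applying it to any admissible test function $\Phi$ from Theorem \ref{keykey} (continuous, bounded below, admitting a subsolution) yields $\langle \mu, \Phi\rangle = \lim_n \langle \mu_n, \Phi\rangle \geq 0$. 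Hence $\mu \in \MM$.

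The main subtlety that I would flag is that the test functions $\Phi$ in the definition of a Mather measure are only bounded from below, not above, so the lower semicontinuity built into Lemma \ref{presub} yields only $\langle \mu, \Phi\rangle \leq \liminf_n \langle \mu_n, \Phi\rangle$, which does \emph{not} by itself transmit the sign condition $\langle \mu_n, \Phi\rangle \geq 0$ to the limit. This is exactly where Proposition \ref{suppo} becomes indispensable: it reduces the problem to integration of continuous functions on a fixed compact set, where the passage to the limit is unconditional.
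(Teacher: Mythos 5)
Your proof is correct and takes essentially the same approach as the paper's: both rest on Proposition \ref{suppo}, using the common compact support of all Mather measures to show that any narrow limit is supported in the same compact, so that $\langle \mu_n,\Phi\rangle \to \langle\mu,\Phi\rangle$ for \emph{every} continuous $\Phi$ and the defining conditions pass to the limit. You merely make explicit the details (Portmanteau, Tietze extension, the tightness step) that the paper leaves implicit, and your closing remark correctly pinpoints why Lemma \ref{presub} alone would not transmit the sign condition.
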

\begin{proof}  This is a consequence of all Mather measure being supported   in the same compact, according to  Proposition  \ref{suppo}.
The  same holds true for any narrow limit $\mu$ of sequences $\mu_n$ in $\MM$, therefore
\[\langle \mu_n, \Phi \rangle \to \langle \mu, \Phi \rangle \qquad\hbox{for any $\Phi \in C(\R^{2N})$.} \]
\end{proof}

\smallskip

We say that a measure $\mu$ is closed if
\[\langle \mu,  Du \cdot q \rangle =0 \qquad\hbox{ for any $C^1$ function $u$.}\]
We say in addition that it is locally closed if the above equality holds true just for $C^1$ functions with compact support.
For a compactly supported measure the properties of being closed or locally closed are equivalent.
\smallskip

\begin{Proposition}\label{closed} All the measures  $ \mu \in \MM$ are   closed.
\end{Proposition}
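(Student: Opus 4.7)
The plan is to mirror the argument of Proposition \ref{newkeybis} but without the discount term, exploiting Theorem \ref{keykey} in place of Theorem \ref{newkey}. Fix $\mu \in \MM$ and, as a first step, let $\psi\in C^1(\R^N)$ be constant outside a compact set, so that $D\psi$ has compact support. I would set
\[
\overline L_n(x,q) = D\psi(x)\cdot q + \tfrac{1}{n}(L(x,q)\vee 0),\qquad
\underline L_n(x,q) = -D\psi(x)\cdot q + \tfrac{1}{n}(L(x,q)\vee 0).
\]
Both are continuous on $\R^{2N}$ and bounded from below: the term $\pm D\psi\cdot q$ is only non-zero on a compact set in $x$, and on that compact set $\tfrac{1}{n}(L\vee 0)$ is superlinear in $q$ by \eqref{fenchel1}, so it dominates the linear term $D\psi\cdot q$ from below. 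Moreover $\pm\psi$ is trivially a subsolution of $\overline L_n$, respectively $\underline L_n$, since $\tfrac{1}{n}(L\vee 0)\geq 0$.

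Applying Theorem \ref{keykey} to both $\overline L_n$ and $\underline L_n$ I obtain
\[
\langle\mu,\overline L_n\rangle\geq 0\quad\hbox{and}\quad \langle\mu,\underline L_n\rangle\geq 0,
\]
which together give
\[
|\langle\mu,D\psi\cdot q\rangle|\leq \tfrac{1}{n}\langle\mu,L\vee 0\rangle.
\]
The key point is that $\langle\mu,L\vee 0\rangle$ is finite: by Proposition \ref{suppo} the measure $\mu$ is compactly supported, and $L\vee 0$ is continuous, hence bounded on $\spt\mu$. Sending $n\to\infty$ yields $\langle\mu,D\psi\cdot q\rangle=0$, which is exactly the locally closed property.

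To pass to an arbitrary $\psi\in C^1(\R^N)$, I would invoke compactness of $\spt\mu$ once more: choose a cutoff $\chi\in C^1_c(\R^N)$ with $\chi\equiv 1$ on a neighborhood of $\spt\mu$ (projected on the first factor). Then $\chi\psi$ is $C^1$ with compact support, and $D(\chi\psi)\cdot q = D\psi\cdot q$ on $\spt\mu$, so $\langle\mu, D\psi\cdot q\rangle=\langle\mu, D(\chi\psi)\cdot q\rangle=0$ by the previous step. No obstacle is really serious here, since Proposition \ref{suppo} has already done the heavy lifting; the only care required is the bounded-below verification of $\overline L_n$, $\underline L_n$, which is what makes the application of Theorem \ref{keykey} legitimate.
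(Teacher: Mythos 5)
Your proposal is correct and follows essentially the same route as the paper: the paper likewise perturbs $\pm D\psi\cdot q$ by $\eps\,(L\vee 0)$, tests with $\mu$ via the defining inequality of Mather measures, and lets $\eps\to 0$ using finiteness of $\langle\mu, L\vee 0\rangle$. Your version is in fact slightly more careful, since you first treat $\psi$ constant outside a compact set (so that $\overline L_n$, $\underline L_n$ are genuinely bounded from below, as Theorem \ref{keykey} requires) and then pass to general $\psi\in C^1$ by a cutoff using the compact support of $\mu$ from Proposition \ref{suppo} — a step the paper handles implicitly via the remark that closed and locally closed coincide for compactly supported measures.
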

\begin{proof}
Given $\mu \in \MM$,  we consider   a  $C^1$ function $\psi$ on $\R^N$,  and  set for $\eps>0$
\[ \overline L_\eps(x,q)= D\psi(x) \cdot q  +  \eps \,( L(x,q) \vee 0) \qquad  \underline L_\eps(x,q)=  - D\psi(x) \cdot q  +  \eps \, ( L(x,q) \vee 0)\]
The argument goes along the same lines as in  Proposition \ref{newkeybis}. The  functions $\pm u$  are subsolutions
corresponding to $ \overline L_\eps$, $ \underline L_\eps$, so that
\begin{equation}\label{close1}
- \eps \langle \mu,( L(x,q) \vee 0) \rangle \leq \langle  \mu, Du \cdot q \rangle \leq \eps \langle \mu,( L(x,q) \vee 0) \rangle.
\end{equation}
Since  $\langle \mu, L \vee 0 \rangle$ is finite,
and  $\eps$ is arbitrary, we  derive from \eqref{close1}
\[ \langle \mu , Du \cdot q \rangle =0.\]
\end{proof}

\smallskip

We finally get a characterization of $\MM$.

\begin{Proposition}\label{close}    The following  conditions are equivalent:
\begin{itemize}
  \item[{\bf (i)}] $\mu \in \MM$;
  \item[{\bf (ii)}] $\mu$ is locally closed and $\langle \mu, L \rangle =0$
  \item[{\bf (iii)}]  $\langle \mu, L \rangle =0$ and any $\Phi$ admitting subsolution is integrable with respect to $\mu$ with
  $\langle \mu, \Phi \rangle \geq 0 $.
  \end{itemize}
  \end{Proposition}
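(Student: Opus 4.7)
The plan is to prove the cycle (i) $\Rightarrow$ (ii) $\Rightarrow$ (iii) $\Rightarrow$ (i). The implication (i) $\Rightarrow$ (ii) is immediate from Proposition \ref{closed} (every $\mu \in \MM$ is closed, hence locally closed) together with the defining identity $\langle \mu, L\rangle = 0$. The implication (iii) $\Rightarrow$ (i) is trivial since the condition in (iii) already covers those $\Phi$ that are bounded from below and admit a subsolution.

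The work is concentrated in (ii) $\Rightarrow$ (iii). My first goal is to show that any $\mu$ satisfying (ii) is compactly supported in $\A \times B_M$ for some $M>0$, paralleling Proposition \ref{suppo}. Given $y \in \spt \mu_1 \setminus \A$, I invoke Proposition \ref{lui!!} to produce a locally Lipschitz $v$ with $Dv(x) \cdot q \leq L(x,q)$ almost everywhere and $Dv(x) \cdot q \leq L(x,q) - \eps$ on a neighborhood $U_0$ of $y$, and I use Proposition \ref{prescorcia} to arrange $v$ to be constant outside a sufficiently large compact set. The crucial smoothing step exploits the convexity of $H(x,\cdot)$: setting $v_\delta = v * \rho_\delta$, Jensen's inequality and the uniform continuity of $H$ on compact sets yield
\[
H(x, Dv_\delta(x)) \leq \omega_R(\delta) \ \text{ for every } x, \qquad H(x, Dv_\delta(x)) \leq -\eps/2 \ \text{ on a slightly shrunken } U_0' \subset U_0,
\]
where $R$ is a Lipschitz bound on $v$ and $\omega_R(\delta) \to 0$. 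By Fenchel duality these become $Dv_\delta(x)\cdot q \leq L(x,q) + \omega_R(\delta)$ for all $(x,q)$ and $Dv_\delta(x)\cdot q \leq L(x,q) - \eps/2$ on $U_0' \times \R^N$. Since $v_\delta$ is constant outside a compact set, it differs from a constant by a $C^\infty_c$ function, so local closedness gives $\langle \mu, Dv_\delta\cdot q\rangle = 0$. Integrating the two bounds against $\mu$ and using $\langle \mu, L\rangle = 0$ delivers $0 \leq -(\eps/2)\mu_1(U_0') + \omega_R(\delta)$; sending $\delta \to 0$ forces $\mu_1(U_0) = 0$, contradicting $y \in \spt \mu_1$. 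The uniform Lipschitz bound $|Dv_\delta| \leq R$ then drives the parallel argument on a set $W \subset \A \times \{|q|>M-1\}$ where $L(x,q) > R|q| + \eps$, to exclude points of $\spt \mu$ with $|q|$ large.

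With compact support established, local closedness upgrades to closedness by multiplying any $C^1$ function by a cutoff that equals $1$ on $\spt \mu$. For a general $\Phi \in C(\R^{2N})$ admitting a Lipschitz subsolution $u$, I mollify $u$ to a $C^\infty$ function $u_\delta$; multiplying by a $C^1_c$ cutoff $\chi$ with $\chi \equiv 1$ on $\spt \mu$ produces a test function $u_\delta \chi$ for which local closedness yields $\langle \mu, Du_\delta\cdot q\rangle = 0$. The subsolution inequality, mollified in $x$, gives $Du_\delta(x)\cdot q \leq \Phi_\delta(x,q)$; since $\Phi$ is continuous and $\spt \mu$ is compact, $\Phi_\delta \to \Phi$ uniformly on $\spt \mu$, and passing to the limit produces $0 \leq \langle \mu, \Phi\rangle$. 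Integrability of $\Phi$ with respect to $\mu$ is automatic from continuity and compact support.

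The main obstacle is the smoothing step in the compact-support proof: the strict margin $\eps$ must survive approximation uniformly in $q$, which forces me to work on the Hamiltonian side (using convexity of $p \mapsto H(x,p)$ via Jensen) rather than on the Lagrangian side, where the modulus of continuity of $L(\cdot,q)$ in $x$ is typically not uniform in $q$.
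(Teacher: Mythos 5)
Your proof is correct and follows essentially the same route as the paper: (i)$\Rightarrow$(ii) is quoted from Proposition \ref{closed}, (iii)$\Rightarrow$(i) is trivial, and for (ii)$\Rightarrow$(iii) one first establishes that $\mu$ is compactly supported and then tests mollified subsolutions (with a cutoff) against the now globally closed measure. The only difference is organizational: where the paper first proves $\langle \mu, \Phi\rangle \geq 0$ for $\Phi$ agreeing with $L$ off a compact set and then invokes Corollary \ref{postsuppo}, you inline that corollary's perturbation argument by integrating the mollified strict subsolution coming from Proposition \ref{lui!!} directly against $\mu$ --- the same estimate in dual form, with the convexity/Jensen step made explicit.
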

\begin{proof}
The implication   ${\bf (i)} \Rightarrow {\bf (ii)}$ has been already proved in Proposition \ref{closed}.
We proceed proving  ${\bf (ii)} \Rightarrow {\bf (iii)}$.  Let $\mu$ be a measure satisfying {\bf (ii)}.

 We take
$\Phi$  admitting subsolution and coinciding with $L$ outside a compact of $\R^{2N}$, namely  satisfying  \eqref{postsuppo1}, then
the critical value of $\Phi$ is less than or equal $0$, and the corresponding Hamiltonian $H_\Phi$ satisfies \eqref{A1}--\eqref{A3}.
We  can therefore apply Proposition \ref{prescorcia}  to $H_\Phi$  and find that
there is a compactly supported subsolution for $\Phi$,  say $u$.

 Given $\eps >0$,  we can   regularize $u$ obtaining a compactly supported smooth  function $\overline u$ which
 is subsolution for  $\Phi + \eps$. Exploiting that $\mu$ is locally closed, we get
  \[\langle \mu, \Phi  + \eps \rangle \geq \langle \mu, D  \overline u  \cdot q\rangle =0\]
and the positive quantity $\eps$ being arbitrary
 \[\langle \mu, \Phi  \rangle \geq 0.\]
 This implies  by Corollary \ref{postsuppo} that $\mu$    is compactly supported, and consequently any function of $ C(\R^{2N})$
  is  integrable with respect to $\mu$.  We proceed proving that $\langle \mu, \Phi  \rangle \geq 0$ for any $\Phi$ admitting subsolution.
  We denote by $B$ an open ball of $\R^N$
 such that $\spt \mu \subset B \times \R^N$.   Taken $\eps >0$  and $\Phi$ admitting a subsolution $u$,  we can regularize $u$ in some open ball containing  $B$
  obtaining  a function $\bar u$ of class $C^1$ in $B$ such that
 \[  \Phi(x,q)  + \eps  \geq  D \bar u(x) \cdot q      \qquad\hbox{ for $(x,q) \in B \times \R^N$.}\]
Exploiting that  $\spt \mu \subset B \times \R^N$ and  that $\mu$ is   closed, we therefore get
 \[\langle \mu, \Phi + \eps \rangle \geq \langle \mu, Du \cdot q \rangle =0.\]
 This proves the claim since $\eps$ has been arbitrarily chosen.  The implication   ${\bf (iii)} \Rightarrow {\bf (i)}$ is  trivial.
\end{proof}

\bigskip

\bigskip

\section{Asymptotic results}

The first asymptotic result is:

\begin{Theorem}\label{closebis} Given    $z \in \R^N$ and an  infinitesimal sequence   $\lambda_j< \la_z$, we consider a sequence
  $\mu_j \in \MM_{\la_j,z}$,  then $\mu_j$ narrowly  converges, up to subsequences,   to a probability measure  $\mu \in \MM $.
\end{Theorem}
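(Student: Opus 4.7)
The plan is to extract a narrowly convergent subsequence via Lemma \ref{sub}, then verify that the limit $\mu$ satisfies the equivalent characterization (ii) of Proposition \ref{close}, namely that $\mu$ is locally closed and $\langle \mu, L \rangle = 0$.

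For compactness, observe that by definition of a $(\la_j,z)$--Mather measure we have $\langle \mu_j, L\rangle = \la_j u_{\la_j}(z)$. The family $\{u_\la\}_{\la>0}$ is locally equibounded by Proposition \ref{prop4-2} and Lemma \ref{lem4-2}, so $u_{\la_j}(z)$ is bounded and $\la_j u_{\la_j}(z) \to 0$. Lemma \ref{sub} then produces, after passing to a subsequence (not relabeled), some $\mu \in \PP$ with $\mu_j \to \mu$ narrowly. For local closedness, pick any $\psi \in C^1(\R^N)$ constant outside a compact set. Proposition \ref{newkeybis} gives
\[
\la_j \langle \mu_j, \psi \rangle + \langle \mu_j, D\psi \cdot q\rangle = \la_j \psi(z).
\]
As $\la_j\to 0$, both $\la_j \langle \mu_j,\psi\rangle$ and $\la_j\psi(z)$ tend to zero by boundedness of $\psi$. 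Since $D\psi \cdot q$ is continuous, supported in $K \times \R^N$ for a compact $K$, and linearly growing in $q$, Lemma \ref{subsub} (whose hypotheses are met thanks to the boundedness of $\langle \mu_j, L\rangle$ established above) yields $\langle \mu_j, D\psi \cdot q \rangle \to \langle \mu, D\psi \cdot q\rangle$. Passing to the limit we obtain $\langle \mu, D\psi \cdot q\rangle = 0$, i.e., $\mu$ is locally closed.

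To identify $\langle \mu, L \rangle$ with $0$, one inequality is immediate from Lemma \ref{presub}: $\langle \mu, L \rangle \leq \liminf_j \langle \mu_j, L \rangle = 0$. For the reverse inequality, combine Propositions \ref{amarilho} and \ref{prescorcia} to obtain a subsolution $w$ of the critical equation that is constant outside a compact set, and subtract the constant so that $w$ vanishes outside a compact. A standard mollification yields $\bar w \in C^1(\R^N)$ with compact support and $D\bar w(x) \cdot q \leq L(x,q) + \eps$ for a.e. $x$ and all $q$. Local closedness gives $\langle \mu, D\bar w \cdot q\rangle = 0$, and therefore
\[
\langle \mu, L \rangle = \langle \mu, L - D\bar w \cdot q \rangle \geq -\eps,
\]
since $\mu$ is a probability measure. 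Letting $\eps \to 0$ delivers $\langle \mu, L \rangle \geq 0$, and Proposition \ref{close} identifies $\mu$ as an element of $\MM$.

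The most delicate step is the last one: the regularization has to preserve compact support (so that local closedness is applicable) while producing a controlled error in the subsolution inequality, and one has to verify $\mu$-integrability of $D\bar w \cdot q$. The latter follows from $\langle \mu, L\rangle$ being finite combined with the uniform superlinear growth \eqref{fenchel1} of $L$ on compact $x$-sets, which forces $\int_{B_R\times \R^N} |q|\, d\mu < +\infty$ for every ball $B_R$.
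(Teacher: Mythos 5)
Your proposal is correct and follows essentially the same route as the paper: narrow compactness from the bound $\langle\mu_j,L\rangle=\la_j u_{\la_j}(z)$ via Lemma \ref{sub}, local closedness of the limit from Proposition \ref{newkeybis} combined with Lemma \ref{subsub}, the lower bound $\langle\mu,L\rangle\geq 0$ by regularizing a compactly supported critical subsolution, the upper bound from Lemma \ref{presub}, and the conclusion via the characterization in Proposition \ref{close}. Your added remarks on preserving compact support in the mollification and on the $\mu$-integrability of $D\bar w\cdot q$ only make explicit details the paper leaves implicit.
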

\begin{proof}
Since the sequence $ \langle \mu_j ,L \rangle =  \lambda_j \, u_{\lambda_j}(z)$ is bounded by Proposition \ref{prop4-2}, Lemma \ref{lem4-2}, we get
 that $\mu_j$  narrowly converges to some measure $\mu$, up to subsequences,
 in force of Lemma \ref{sub}.
Let $\psi \in C_c^1$,  then by Proposition \ref{newkeybis}
\begin{equation}\label{closebis0}
 \langle \mu_j, \la_j \, \psi + D\psi \cdot q \rangle = \la_j \,  \psi(z).
\end{equation}
Since $\psi$ is compactly supported,  then
\[\langle \mu_j, \psi \rangle \to  \langle \mu , \psi \rangle \qquad\hbox{as $j \to + \infty$} \]
and by Lemma \ref{subsub}
\[\langle \mu_j, D\psi \cdot q \rangle \to  \langle \mu , D\psi \cdot q \rangle \qquad\hbox{as $j \to + \infty$.} \]
Sending $j$ to infinity,  we thus  derive  from \eqref{closebis0}  that
$  \langle \mu, D \psi \cdot q \rangle= 0$, or in other terms that
$\mu$ is locally closed.   We further deduce via regularization of a compactly supported subsolution for $L$,  which does exist by  Proposition
\ref{prescorcia}
\begin{equation}\label{closebis1}
  \langle \mu,L \rangle \geq 0.
\end{equation}
On the other side, we  have by Lemma \ref{presub}
\[0=\lim_j \lambda_j \, u_{\lambda_j}(z) =\lim_j \langle \mu_j, L  \rangle \geq \langle \mu, L  \rangle \]
so that
\[\langle \mu, L  \rangle = 0\]
This concludes the proof in force of the characterization of Mather measures provided in Proposition \ref{close}.
\end{proof}

\smallskip
We define

\begin{equation}\label{deflim}
 w(x) = \max \{ v(x) \mid v \;\hbox {subsolution to \eqref{EP} with $\langle \mu, v\rangle \leq 0  \; \forall \mu \in \MM$}\}
\end{equation}

\begin{Proposition} The function $w$ defined above is a weak KAM solution.
\end{Proposition}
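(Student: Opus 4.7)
The plan is to show that $w$ is a bounded-below viscosity solution of \eqref{EP}; combined with Theorem \ref{superKAM}, this yields that $w$ is a weak KAM solution.

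Let $\F$ denote the family appearing in the definition of $w$. I first check that $\F$ is nonempty and that $w$ is locally Lipschitz and bounded from below. Proposition \ref{prescorcia} yields a subsolution $v_0$ of \eqref{EP} constant outside a compact set; since $\MM$ is narrowly compact by Corollary \ref{lemclose}, the quantity $M_0 := \sup_{\mu \in \MM} \langle \mu, v_0\rangle$ is finite, so $v_0 - M_0 \in \F$, and being constant outside a compact set it is bounded from below, yielding the lower bound $w \geq v_0 - M_0$. For the local upper bound, fix any $\mu_0 \in \MM$; by Proposition \ref{suppo} its first-marginal support is a compact set $K_0 \subset \A$. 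Each $v \in \F$ satisfies $\min_{K_0} v \leq \langle \mu_0, v\rangle \leq 0$, so for some $y_v \in K_0$ with $v(y_v) \leq 0$ the subsolution estimate gives $v(x) \leq v(y_v) + S_0(y_v, x) \leq \max_{y \in K_0} S_0(y, x)$. Hence $\F$ is locally equibounded, and the equi-Lipschitzness of subsolutions from \eqref{A2} implies $w$ is locally Lipschitz. The subsolution property of $w$ follows by the standard Perron argument for pointwise suprema of locally equibounded families of subsolutions.

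It remains to show $w$ is a supersolution. On $\A$ this is automatic: by Proposition \ref{palle} applied to small balls around Aubry points, no strict subsolution of $H[u]=0$ exists in neighborhoods meeting $\A$, which forces the supersolution property at Aubry points for any subsolution. For $x_0 \notin \A$, I argue by contradiction via a Perron-type bump. Suppose $\phi \in C^1$ touches $w$ from below at $x_0$ with $H(x_0, D\phi(x_0)) < 0$. Replacing $\phi$ by $\phi - c|x - x_0|^2$ (which preserves $D\phi(x_0)$ and hence $H(x_0, D\phi(x_0)) < 0$) gives a strict touch with quadratic gap. Set $\psi(x) := \phi(x) + \eta - c|x - x_0|^2$. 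For $r, \eta > 0$ small and $c > 0$ suitably chosen, $\psi$ is a strict classical subsolution on $B(x_0, r) \subset \R^N \setminus \A$ with $\psi(x_0) = w(x_0) + \eta > w(x_0)$ and $\psi(x) \leq w(x) - (cr^2 - \eta)$ on $\partial B(x_0, r)$. By the equi-Lipschitz constant $L$ of subsolutions, picking $v \in \F$ with $v(x_0)$ within $\tau$ of $w(x_0)$ gives $w(x) - v(x) \leq 2Lr + \tau$ on $\partial B(x_0, r)$, so arranging $cr^2 - \eta > 2Lr + \tau$ forces $\psi \leq v$ there. The function $\tilde v := \max(\psi, v)$ on $B(x_0, r)$, extended by $v$ outside, is then a subsolution of \eqref{EP} with $\tilde v(x_0) \geq \psi(x_0) > w(x_0)$. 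Since the modification is contained in $B(x_0, r) \subset \R^N \setminus \A$ while every $\mu \in \MM$ has first-marginal support in $\A$, $\langle \mu, \tilde v\rangle = \langle \mu, v\rangle \leq 0$, so $\tilde v \in \F$, contradicting $w(x_0) = \sup_{v \in \F} v(x_0)$.

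The main technical obstacle is the quantitative balance in the Perron bump: the gap condition $cr^2 - \eta > 2Lr + \tau$ must be compatible with $\psi$ remaining a strict classical subsolution on $B(x_0, r)$, which constrains $cr$ from above via the modulus of continuity of $H$ near $(x_0, D\phi(x_0))$. This balance is achievable by first fixing $\eta, \tau$ small, then choosing $c$ large and $r$ of order $(\eta/c)^{1/2}$, while keeping the perturbation within the window where $H$ stays negative; if the modulus of continuity of $H$ is too weak to accommodate the necessary $cr$, one may instead build $\psi$ from a smooth strict local subsolution of \eqref{EP} (which exists at any $x_0 \notin \A$) independently of $\phi$.
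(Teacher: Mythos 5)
Your overall route is genuinely different from the paper's: you try to verify directly that $w$ is a viscosity solution and then invoke the bounded-below characterization of Theorem \ref{superKAM}, whereas the paper observes that, since every $\mu\in\MM$ is carried by $\A\times\R^N$ (Proposition \ref{suppo}), the constraint $\langle\mu,v\rangle\leq 0$ depends only on the trace $v|_\A$, so that $w$ is the maximal subsolution with its own trace on $\A$ and is therefore weak KAM by Lemma \ref{enric} --- a one-line reduction that never requires checking the supersolution property by hand. Your preliminary steps (nonemptiness of the family, the lower bound, local equiboundedness via $\min_{K_0}v\leq\langle\mu_0,v\rangle\leq 0$, equi-Lipschitzness, and the subsolution property of the supremum) are fine, and your key structural observation --- that a modification of $v$ localized away from $\A$ does not affect any of the constraints $\langle\mu,v\rangle\leq0$ --- is exactly the right reason the proposition is true.

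The genuine gap is in the supersolution verification at $x_0\notin\A$. To get $\psi=\phi+\eta-c|x-x_0|^2\leq v$ on $\partial B(x_0,r)$ you must absorb the discrepancy $w-v\leq 2\ell r+\tau$ ($\ell$ the local Lipschitz constant of the family), which forces $cr\geq 2\ell$ even after sending $\eta,\tau\to0$. But then on the set $\{\psi>v\}$, where the subsolution property of $\psi$ is actually needed, one computes $|D\psi-D\phi|=2c|x-x_0|$ of order $\ell$, \emph{independently} of $r$: this is not a small perturbation that a modulus of continuity of $H$ can absorb, since $\ell$ is comparable to the radius of the sublevel set $\{p\mid H(x_0,p)\leq0\}$ and a displacement of that size generically pushes $D\psi$ out of it, destroying $H(x,D\psi)<0$. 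So the primary argument fails for the very reason you flag, and the fallback (``build $\psi$ from a smooth strict local subsolution'') is only a gesture --- the same loss $2\ell r+\tau$ incurred in passing from $w$ to a near-maximizer $v$ reappears and must be beaten by a bump whose gradient is constrained by the strictness margin, which is not established. (A secondary, more minor point: at $x_0\in\A$ the supersolution property of an arbitrary subsolution does hold, but the correct source is Proposition \ref{lui!!} --- if it failed one could build, via $\min\{v,\phi+a\}$, a global subsolution strict near $x_0$, forcing $x_0\notin\A$ --- rather than Proposition \ref{palle}, which only concerns balls containing all of $\A$.) To complete a proof along these lines one should, as the paper does, replace each admissible $v$ by the weak KAM solution $\hat v(x)=\min_{y\in\A}(v(y)+S_0(y,x))\geq v$, which still satisfies the measure constraints because it agrees with $v$ on $\A$; the point of Lemma \ref{enric} is precisely that this maximization over a fixed trace is attained by a weak KAM solution, which is what your pointwise supremum argument cannot deliver (a supremum of supersolutions need not be a supersolution).
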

\begin{proof} As maximum of subsolutions, $w$ is a subsolution to \eqref{EP}. Since all the Mather measures are supported
in $\A \times \R^N$, then $w$  is the maximum of subsolutions with a given trace on $\A$. This implies the assertion by Lemma \ref{enric}.
\end{proof}

\smallskip
\medskip

We  give an alternative  formula for $w$ using the Peierls barrier.

\begin{Theorem} \label{thm5-1}   The function $w$ defined in \eqref{deflim} coincide with
\begin{equation}\label{enric1}
 \min\{\langle\mu,P_0(\cdot,x)\rangle \mid \mu\in \MM\} \ \ \text{ for }x\in\R^n.
\end{equation}
\end{Theorem}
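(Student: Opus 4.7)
Denote by $\tilde w(x)$ the right-hand side of \eqref{enric1}. The infimum is actually attained: all $\mu\in\MM$ share the compact support $\A\times B_M$ provided by Proposition \ref{suppo}, on the compact set $\A$ the function $P_0(\cdot,x)$ is continuous and bounded, and $\MM$ is narrowly compact by Corollary \ref{lemclose}, so $\mu\mapsto\langle\mu,P_0(\cdot,x)\rangle$ is narrowly continuous on $\MM$. The plan is to prove separately $w(x)\le\tilde w(x)$ and $\tilde w(x)\le w(x)$.

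For the inequality $w\le \tilde w$, I would take any admissible competitor $v$ in the definition \eqref{deflim} and any $\mu\in\MM$. For each $y\in\A$ the basic subsolution bound $v(x)-v(y)\le S_0(y,x)$ holds, and on the Aubry set one has $S_0(y,x)=P_0(y,x)$ for every $x\in\R^N$, which is a classical identification in weak KAM theory to be invoked from Appendix \ref{KAM}. Integrating against $\mu_1$, which is supported in $\A$ by Proposition \ref{suppo}, and using $\langle\mu,v\rangle\le 0$, I obtain
\[
v(x)\le\langle\mu,v\rangle+\langle\mu,P_0(\cdot,x)\rangle\le\langle\mu,P_0(\cdot,x)\rangle.
\]
Taking the supremum in $v$ and the minimum in $\mu$ gives $w(x)\le\tilde w(x)$.

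For the reverse inequality the strategy is to verify that $\tilde w$ is itself an admissible competitor in \eqref{deflim}. That $\tilde w$ solves \eqref{EP} in the subsolution sense follows because, for each fixed $\mu$, the function $x\mapsto\int P_0(y,x)\,d\mu_1(y)$ is an integral average of the critical subsolutions $P_0(y,\cdot)$ with $y\in\A$; convexity of $H$ in the momentum preserves the subsolution property under averaging, and the infimum over $\mu$ preserves it by the standard lower envelope stability of viscosity subsolutions. To check $\langle\nu,\tilde w\rangle\le 0$ for every $\nu\in\MM$, I would use an ergodic decomposition of $\nu$ inside $\MM$ to reduce matters to the extreme points of $\MM$. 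For an extremal $\nu$, the marginal $\nu_1$ is concentrated on a single Aubry class $C$, on which $P_0$ vanishes identically. Taking $\mu=\nu$ in the infimum defining $\tilde w$ gives $\tilde w(x)\le\int P_0(y,x)\,d\nu_1(y)$, and integrating this against $\nu_1$ yields
\[
\langle\nu,\tilde w\rangle\le\int\!\!\int P_0(y,x)\,d\nu_1(y)\,d\nu_1(x)=0,
\]
which propagates to arbitrary $\nu$ by the decomposition. Hence $\tilde w$ is admissible and $\tilde w(x)\le w(x)$.

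The main obstacle I anticipate is the reduction to extreme points in the last step: an ergodic-type decomposition of Mather measures adapted to the present non-compact framework has to be set up, together with the identification of the first marginal of each extremal Mather measure with a single Aubry class. The auxiliary facts needed along the way, namely the equality $S_0=P_0$ on $\A\times\R^N$ and the continuity of $P_0(\cdot,x)$ on the compact Aubry set, are classical but should be verified against the weakened hypotheses \eqref{A1}--\eqref{A3} used in this paper.
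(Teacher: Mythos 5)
Your first inequality, $w\le\tilde w$, is correct and is essentially the paper's argument: one integrates $v(x)-v(y)\le S_0(y,x)\le P_0(y,x)$ against $\mu_1$ and uses $\langle\mu,v\rangle\le 0$. (You do not even need the identity $S_0=P_0$ on $\A\times\R^N$; the one-sided bound $S_0\le P_0$, which follows from the triangle inequality, already suffices.)

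The second inequality is where there is a genuine gap, and it is exactly the one you flag. Your plan requires (a) an ergodic-type decomposition of an arbitrary $\nu\in\MM$ into extreme points of $\MM$, and (b) the fact that an extremal Mather measure has first marginal concentrated on a single static class, on which $P_0$ vanishes identically. Neither is available in this paper's framework: the elements of $\MM$ are merely closed (holonomic) probability measures attached to a continuous, convex, coercive Hamiltonian on a noncompact space; there is no Euler--Lagrange flow, hence no notion of invariance or ergodicity, and the identification of extreme points with single-class measures is a deep theorem (of Ma\~n\'e type) in the Tonelli setting that is neither proved nor provable with the tools of this paper. The double integral $\int\!\!\int P_0(y,x)\,d\nu_1(y)\,d\nu_1(x)$ is in general strictly positive when $\nu_1$ charges several static classes, so without (a)--(b) the argument does not close. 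The paper sidesteps all of this with a different choice of competitor: rather than testing $\tilde w$ itself for admissibility in \eqref{deflim}, it tests, for each fixed $x$, the function $y\mapsto -P_0(y,x)+\tilde w(x)$. This is a subsolution (a maximum over $z\in\A$ of subsolutions $-S_0(z,\cdot)$ plus constants), and the required inequality $\langle\mu,-P_0(\cdot,x)+\tilde w(x)\rangle\le 0$ is \emph{immediate} from the definition of $\tilde w$ as $\min_\nu\langle\nu,P_0(\cdot,x)\rangle$ --- no decomposition is needed. Maximality of $w$ then gives $w(y)\ge -P_0(y,x)+\tilde w(x)$ for all $x,y$; taking $x=y=z\in\A$ and using $P_0(z,z)=0$ (Lemma \ref{lem5-2}) yields $w\ge\tilde w$ on $\A$, and the weak KAM property of $w$ propagates this to all of $\R^N$. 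You should replace your extremal-decomposition step with this argument.
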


\begin{proof} We denote by $u$ the function defined in \eqref{enric1}.
We know that  the function $x\mapsto P_0(y,x)$ is a weak KAM solution for any $y$. By the convexity of $H(x,p)$ in
the variables $p$, we deduce that the function $x\mapsto \langle\mu,P_0(\cdot,x)\rangle$
 is a subsolution of \eqref{EP} and the same holds true for $u$.

Next, we show that $w\leq u$ in $\R^n$. Since $w$ is a subsolution o \eqref{EP}, we have
\[w(x)-w(y)\leq P_0(y,x) \ \ \text{ for all }x,y\in \R^n. \]
Integration of both sides of the above in $y$ with respect to $\mu \in \MM$ yields
\[w(x)\leq \langle\mu, w \rangle+ \langle\mu, P_0(\cdot,x)\rangle\leq \langle\mu,P_0(\cdot,x)\rangle.
\]
This shows that $u\leq w$ in $\R^n$.

Since  $ -S_0(\cdot,z)$
is a subsolution of \eqref{EP}, the function
\[y\mapsto -P_0(x,y)=\max_{z\in \A}(-S_0(x,z)-S_0(z,y))\]
is a subsolution as well. Thus, the function
$y\mapsto -P_0(x,y)+u(x)$ is a subsolution of \eqref{EP} for all $x\in \R^n$.
Integrating this function with respect to
$\mu\in\MM$, we get
\[\langle\mu,-P_0(\cdot,x)+u(x)\rangle=-\langle \mu,P_0(\cdot,x)\rangle+ \inf_{\nu\in\MM}\langle \nu,P_0(\cdot,x)\rangle\leq 0.
\]
The definition of $w$ in \eqref{deflim} guarantees that
\[w(y)\geq -P_0(y,x)+u(x) \ \ \text{ for all }x,y\in \R^n.
\]
In particular, we have in view of Lemma   \ref{lem5-2}
\[w(z)\geq u(z) \ \ \text{ for all }z\in\A.\]
Since $w$ is a weak KAM solution and $u$ a subsolution, the inequality above ensures that $u\leq w$ in $\R^n$.
Thus, we conclude that $u=w$ in $\R^n$.
\end{proof}

 We proceed proving the main result:

\begin{Theorem}\label{card}  The functions $u_\lambda $ locally uniformly converge to $w$ defined as in \eqref{deflim}/\eqref{enric1}.
\end{Theorem}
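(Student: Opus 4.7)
The plan is to use the relative compactness of $\{u_\la\}$ in $C(\R^N)$ provided by Proposition \ref{propo} and show that every subsequential locally uniform limit coincides with $w$, which will promote the convergence to the full family. Fix an infinitesimal sequence $\la_j \to 0^+$ and, extracting if necessary, assume $u_{\la_j} \to \bar u$ locally uniformly on $\R^N$. Standard stability of viscosity subsolutions yields that $\bar u$ is a subsolution of \eqref{EP}.

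For the upper bound $\bar u \leq w$, I would fix an arbitrary $\mu \in \MM$ and show $\langle \mu, \bar u\rangle \leq 0$. Since $u_{\la_j}$ is a subsolution of \eqref{DP}, Fenchel duality yields $Du_{\la_j}(x) \cdot q \leq L(x,q) - \la_j u_{\la_j}(x)$ for a.e.\ $x$ and every $q$; equivalently, $u_{\la_j}$ is a subsolution for the generalized Lagrangian $\Phi_j(x,q) := L(x,q) - \la_j u_{\la_j}(x)$. The characterization of Mather measures in Proposition \ref{close}(iii) then gives $\langle \mu, \Phi_j\rangle \geq 0$, which, combined with $\langle \mu, L\rangle = 0$, forces $\langle \mu, u_{\la_j}\rangle \leq 0$. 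Because $\mu$ is compactly supported (Proposition \ref{suppo}) and $u_{\la_j} \to \bar u$ uniformly on its support, passage to the limit yields $\langle \mu, \bar u\rangle \leq 0$. Since $\bar u$ is a subsolution of \eqref{EP}, the definition \eqref{deflim} produces $\bar u \leq w$.

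For the lower bound $\bar u \geq w$, fix $z \in \R^N$ and an admissible competitor $v$ in \eqref{deflim}, i.e.\ a subsolution of \eqref{EP} with $\langle \mu, v\rangle \leq 0$ for every $\mu \in \MM$. Using Proposition \ref{prescorcia} with the compact set $K = \A \cup \{z\}$, replace $v$ by a subsolution $v'$ of \eqref{EP} that coincides with $v$ on $K$ and is constant outside a compact set, hence bounded. Then $v'$ is a $\la_j$-discounted subsolution of $\Phi(x,q) := L(x,q) + \la_j v'(x)$, so for $\la_j < \la_z$ (which holds eventually) and any $\mu_j \in \MM_{\la_j,z}$ provided by Theorem \ref{newkey}, one obtains
\[
\la_j u_{\la_j}(z) + \la_j\langle \mu_j, v'\rangle = \langle \mu_j, \Phi\rangle \geq \la_j v'(z),
\]
whence $u_{\la_j}(z) \geq v(z) - \langle \mu_j, v'\rangle$, using that $v'(z)=v(z)$. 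By Theorem \ref{closebis}, after a further extraction, $\mu_j$ narrowly converges to some $\mu_z \in \MM$. Since $v'$ is bounded continuous, $\langle \mu_j, v'\rangle \to \langle \mu_z, v'\rangle$, and as the first marginal of $\mu_z$ is supported in $\A$, where $v'=v$, we have $\langle \mu_z, v'\rangle = \langle \mu_z, v\rangle \leq 0$ by admissibility. Passing to the limit gives $\bar u(z) \geq v(z)$, and taking the supremum over $v$ yields $\bar u(z) \geq w(z)$.

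The delicate point in the scheme is the passage to the limit in $\langle \mu_j, v'\rangle$ along the narrowly convergent sequence of discounted Mather measures, which would fail for a generic unbounded competitor $v$. Proposition \ref{prescorcia} is decisive here: it permits replacing $v$ by a bounded subsolution $v'$ agreeing with $v$ at $z$ and on the Aubry set, precisely the two sets entering the final estimate, while the compact support of ergodic Mather measures (Proposition \ref{suppo}) ensures $\langle \mu_z, v'\rangle = \langle \mu_z, v\rangle$. The combination of these ingredients gives $\bar u = w$ for every subsequential limit, which together with the relative compactness of $\{u_\la\}$ promotes the subsequential convergence to locally uniform convergence of the full family $(u_\la)_{\la>0}$.
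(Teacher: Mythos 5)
Your proposal is correct and follows essentially the same route as the paper: the upper bound reproduces Lemma \ref{lui} (that $\langle\mu,u_{\la}\rangle\le 0$ for every ergodic Mather measure, passed to the limit using compact supports), and the lower bound uses Theorem \ref{newkey} applied to $L+\la_j v'$ with the truncated competitor from Proposition \ref{prescorcia}, together with the narrow convergence of $(\la_j,z)$--Mather measures from Theorem \ref{closebis}. The only (harmless, arguably cleaner) deviation is that you test against an arbitrary admissible competitor $v$ and take the supremum, whereas the paper tests directly against $w$ itself.
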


A lemma is preliminary:

\begin{Lemma}\label{lui} We have that
\[ \langle \mu , u_\lambda \rangle \leq 0  \qquad\hbox{for any  $\lambda >0$, any Mather measure $\mu $.}\]
\end{Lemma}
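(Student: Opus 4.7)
The plan is to recast the viscosity subsolution inequality satisfied by $u_\la$ as a generalized-Lagrangian subsolution inequality for a suitable $\Phi_\la$, and then invoke the characterization of Mather measures given in Proposition~\ref{close}\,\textbf{(iii)} to conclude at once.

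First, I would use that $u_\la$ is locally Lipschitz and a viscosity subsolution of \eqref{DP}, so that $\la u_\la(x) + H(x,Du_\la(x)) \leq 0$ at almost every $x$. Fenchel's inequality $p \cdot q \leq L(x,q) + H(x,p)$, applied with $p = Du_\la(x)$, then immediately upgrades this to the pointwise bound
\[
Du_\la(x) \cdot q \;\leq\; L(x,q) - \la\, u_\la(x) \qquad \text{for a.e. } x \in \R^N \text{ and every } q \in \R^N.
\]

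Next, I would set $\Phi_\la(x,q) := L(x,q) - \la\, u_\la(x)$, which lies in $C(\R^{2N})$ since $L$ is continuous and $u_\la$ is locally Lipschitz. The display above is exactly the statement that $u_\la$ is a subsolution for $\Phi_\la$ in the generalized Lagrangian sense introduced in Section 6. Since every $\mu \in \MM$ is compactly supported by Proposition~\ref{suppo}, the continuous function $\Phi_\la$ is automatically $\mu$-integrable, so the hypotheses of Proposition~\ref{close}\,\textbf{(iii)} are fulfilled; it yields $\langle \mu, \Phi_\la \rangle \geq 0$. Combining this with $\langle \mu, L \rangle = 0$ gives $-\la\,\langle \mu, u_\la \rangle \geq 0$, and dividing by $\la > 0$ finishes the proof.

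I do not anticipate a genuine obstacle here: the only point worth checking carefully is that the pointwise inequality really holds for every $q \in \R^N$ and not just for a bounded range, but this is built into Fenchel duality. The essence of the argument is the observation that $u_\la$ itself serves as a witness subsolution for the generalized Lagrangian $\Phi_\la = L - \la u_\la$, which is what makes the Mather measure inequality directly available.
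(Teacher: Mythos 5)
Your argument is exactly the paper's proof of Lemma \ref{lui}: the paper likewise observes that $u_\la$ is a subsolution for the generalized Lagrangian $L-\la\,u_\la$ and then applies Proposition \ref{close} together with $\langle\mu,L\rangle=0$ to get $0\leq -\la\,\langle\mu,u_\la\rangle$. Your write-up merely makes explicit the Fenchel-inequality step and the integrability check via compact support, both of which the paper leaves implicit.
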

\begin{proof}  The function  $u_\lambda$ is a  subsolution  for $L -  \lambda \, u_\lambda$.
We then get by Proposition \ref{close}
\[0 \leq \langle \mu, L - \lambda  \, u_\lambda \rangle =   - \lambda \, \langle \mu ,  u_\lambda  \rangle.\]
 showing   the assertion.
\end{proof}

\smallskip

\begin{proof}[Proof of Theorem \ref{card}]  Let $v$  be such that  $u_{\lambda_j} \to v$ for some sequence $\lambda_j$ converging to $0$.
 We fix $z \in \R^N$ and assume $\la_j < \la_z$.   We denote by $\mu_j$ a sequence of $(\la_j,z)$--Mather measures.
Owing to Theorem  \ref{closebis}, the $\mu_j$  converge, up to subsequences,  to some probability measure $\mu \in \MM$.

We  apply Proposition \ref{prescorcia} to the function $w$  defined in \eqref{deflim} with the compact subset $K =  \A \cup \{z\}$.
We   obtain in this way a bounded  subsolution $ \bar w$ to $H[u]=0$,  coinciding with $w$ on $\A \cup \{z\}$,
which is at the same time  a $\la_j$--discounted subsolution for $L  +\lambda_j \,\bar  w$.
Since $L  +\lambda_j \,\bar  w$  is bounded from below,  we   get by Theorem \ref{newkey}
\[\langle \mu_j, L  +\lambda_j \, \bar w  \rangle \geq   \la_j \, w(z)\]
and consequently
\begin{equation}\label{card01}
  u_{\la_j} (z) +  \langle \mu_j,  \bar w   \rangle \geq    w(z).
 \end{equation}
The function $\bar w $ is a critical subsolution agreing with $w$ on the Aubry set, and so $\bar w \leq w$ on $\R^N$.
We deduce  from the definition of $w$ in \eqref{deflim}
\[ \langle \mu , \bar  w  \rangle \leq  \langle \mu , w   \rangle \leq 0,\]
and   we get  passing to the limit in \eqref{card01} as $j \to + \infty$
\begin{equation}\label{card1}
   v(z) \geq  w(z).
\end{equation}
On the other side, given any $\nu \in \MM$,  we have  by Lemma  \ref{lui}
\[  \langle \nu,  u_{\lambda_j} \rangle \leq 0\]
and, being $\nu$ compactly supported
\[  \langle \nu, u_{\lambda_j}\rangle \longrightarrow \langle\nu,  v \rangle,  \]
which gives
\[ \langle \nu , v \rangle  \leq 0.\]
This last relation and \eqref{card1}   imply,  by  the maximality of $w$, $w(z)=v(z)$.  This concludes the proof since $z$ has been chosen arbitrarily.
\end{proof}

\bigskip

\section{Mather set}\label{mather}

The (projected) Mather set $\mathcal M$ is defined as the image by the projection ($(x,q)\mapsto x$) of the set
\[\overline{\,\bigcup_{\mu \in \MM}\spt  \,\mu\,}.\]

\medskip

The main result of the section is:

\begin{Theorem}\label{thm6-4}   Let $u_0$  be a weak KAM solution of \eqref{EP}. Then
\[u_0(x)=\max\{v(x)\mid v \;\hbox{weak KAM solution with $\langle \mu, w - u_0 \rangle \leq 0  \, \forall \, \mu\in \MM$}\} \]
\end{Theorem}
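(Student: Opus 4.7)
The inequality $u_0(x)\leq \max\{\cdots\}$ is immediate, since $u_0$ itself is a weak KAM solution and trivially satisfies $\langle \mu,u_0-u_0\rangle=0$ for every $\mu\in\MM$. For the reverse inequality, fix $z\in\R^N$ and any weak KAM solution $v$ with $\langle \mu,v-u_0\rangle\leq 0$ for all $\mu\in\MM$; the goal is to show $v(z)\leq u_0(z)$.

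The strategy is to invoke Theorem \ref{newkey} in parallel for the two critical subsolutions $u_0$ and $v$, and then to pass to the ergodic limit by means of the Mather-measure convergence established in Theorem \ref{closebis}. First, Proposition \ref{prescorcia}, applied with the compact set $K=\A\cup\{z\}$, yields bounded critical subsolutions $\bar u_0$ and $\bar v$, constant outside a compact set, that coincide with $u_0$ and $v$ respectively on $K$. Choose a sequence $\lambda_j\downarrow 0$ with $\lambda_j<\lambda_z$ and a $(\lambda_j,z)$-Mather measure $\mu_j$. Since $\bar u_0$ and $\bar v$ are critical subsolutions, they are automatically $\lambda_j$-discounted subsolutions of the bounded-from-below generalized Lagrangians $L+\lambda_j\bar u_0$ and $L+\lambda_j\bar v$; applying Theorem \ref{newkey} to each (using that $\bar u_0(z)=u_0(z)$ and $\bar v(z)=v(z)$) yields
\[
u_{\lambda_j}(z)+\langle\mu_j,\bar u_0\rangle\ \geq\ u_0(z),\qquad u_{\lambda_j}(z)+\langle\mu_j,\bar v\rangle\ \geq\ v(z),
\]
and subtracting gives the pivotal bound
\[
\langle \mu_j,\bar v-\bar u_0\rangle\ \geq\ v(z)-u_0(z).
\]

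By Theorem \ref{closebis}, a subsequence of $\mu_j$ converges narrowly to some $\mu_\infty\in\MM$. Because $\bar v-\bar u_0$ is bounded and continuous, narrow convergence delivers $\langle \mu_j,\bar v-\bar u_0\rangle\to\langle \mu_\infty,\bar v-\bar u_0\rangle$. Proposition \ref{suppo} places the support of $\mu_\infty$ inside $\A\times\R^N$, and by construction $\bar u_0=u_0$, $\bar v=v$ on $\A$, so $\langle \mu_\infty,\bar v-\bar u_0\rangle=\langle \mu_\infty,v-u_0\rangle\leq 0$ by the standing assumption on $v$. Taking the limit in the pivotal bound yields $v(z)-u_0(z)\leq 0$, and since $z$ was arbitrary, $v\leq u_0$ on $\R^N$, as desired.

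The only point requiring care is the narrow-convergence step, which relies on $\bar v-\bar u_0$ being bounded and continuous; this is precisely what the ``constant outside a compact'' construction in Proposition \ref{prescorcia} provides. Everything else reduces to a clean twofold application of the duality formula of Theorem \ref{newkey} and to the compactness of $\MM$ under narrow limits, already supplied by Theorem \ref{closebis} and Proposition \ref{suppo}.
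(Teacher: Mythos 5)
There is a genuine gap at the step you call the ``pivotal bound''. From Theorem \ref{newkey} you correctly obtain the two estimates
\[
u_{\lambda_j}(z)+\langle\mu_j,\bar u_0\rangle\geq u_0(z),\qquad u_{\lambda_j}(z)+\langle\mu_j,\bar v\rangle\geq v(z),
\]
but these point in the \emph{same} direction, and subtracting one from the other does not give $\langle\mu_j,\bar v-\bar u_0\rangle\geq v(z)-u_0(z)$: from $X+a\geq A$ and $X+b\geq B$ nothing whatsoever follows about $b-a$ versus $B-A$. To subtract you would need the first estimate reversed, i.e. $u_{\lambda_j}(z)+\langle\mu_j,\bar u_0\rangle\leq u_0(z)$, and Theorem \ref{newkey} only ever produces lower bounds $\langle\mu_j,\Phi\rangle\geq\lambda_j u(z)$ (equality holds only for $\Phi=L$ itself). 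In the limit your first inequality reads $w(z)+\langle\mu_\infty,u_0\rangle\geq u_0(z)$, where $w$ is the distinguished solution of \eqref{deflim}; for a general weak KAM solution $u_0$ this is typically strict, so the defect does not vanish as $\lambda_j\to 0$. Everything downstream of the pivotal bound (narrow convergence of $\langle\mu_j,\bar v-\bar u_0\rangle$, localization of $\spt\mu_\infty$ in $\A\times\R^N$, identification of $\bar u_0,\bar v$ with $u_0,v$ there) is fine, but the bound itself is unsupported.

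The paper gets around exactly this obstruction by changing the equation rather than applying the discounted duality twice: it considers the shifted problem $\lambda u+H(x,Du)=\lambda u_0$, proves that $u_0$ is the \emph{maximal} subsolution of it (Lemma \ref{lem6-6}), passes to the shifted Lagrangian $L_\eps=L+D\bar u_\eps\cdot q$ built from a smooth, compactly modified extension $\bar u_\eps$ of $u_0$, observes that the Mather measures of $L_\eps$ coincide with $\MM$ because Mather measures are closed, and then invokes the full convergence Theorem \ref{card} for $L_\eps$, concluding by a two-sided $\eps$ comparison. If you want to keep your two-measure strategy, you must produce an \emph{upper} bound on $u_{\lambda_j}(z)+\langle\mu_j,\bar u_0\rangle$ in terms of $u_0(z)$; that is precisely the role played by the maximality statement of Lemma \ref{lem6-6} for the shifted equation, and without such an input your argument does not close.
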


Note that by Lemma \ref{enric} the right hand--side of the above formula is equal to
\[\max\{v(x)\mid v \;\hbox{subsltn to \eqref{EP}  with $\langle \mu, w - u_0 \rangle \leq 0  \, \forall \, \mu\in \MM$}\} .\]
By the very definition of $\MM$, we have
\[\int_{\R^{2N}} \psi(x) \, d\mu =\int_{\M\times \R^N}\psi(x)\, d\mu \ \ \text{ for }\psi\in C(\R^N), \mu \in \MM.
\]
Accordingly, Theorem \ref{thm6-4} readily yields the following proposition.

\begin{Corollary}
Let $v,\,w$ be weak KAM solutions of \eqref{EP}.  Assume that $v\leq w$ in $\M$, then
$v\leq w$ in $\R^N$.
\end{Corollary}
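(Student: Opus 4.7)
The plan is to read the Corollary as a direct consequence of the variational representation furnished by Theorem \ref{thm6-4}, using the Mather set as support for every $\mu\in\MM$. Concretely, I would apply Theorem \ref{thm6-4} with $u_0:=w$, which gives the representation
\[
w(x)=\max\{\tilde v(x)\mid \tilde v \text{ weak KAM solution of \eqref{EP} with } \langle\mu,\tilde v-w\rangle\leq 0 \ \forall\,\mu\in\MM\},
\]
valid at every $x\in\R^N$. The strategy is then to verify that our hypothesized $v$ is an admissible competitor in this family and thus $v(x)\leq w(x)$ pointwise.

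To check admissibility of $v$, I would exploit the support property displayed right before the Corollary: for every $\mu\in\MM$ and every $\psi\in C(\R^N)$,
\[
\int_{\R^{2N}}\psi(x)\,d\mu=\int_{\M\times\R^N}\psi(x)\,d\mu.
\]
Since weak KAM solutions are continuous (in fact locally Lipschitz by Proposition \ref{propo}-type arguments) and $\mu$ is compactly supported by Proposition \ref{suppo}, the function $v-w$ is $\mu$-integrable, and the identity above applied to $\psi=v-w$ yields
\[
\langle\mu,v-w\rangle=\int_{\M\times\R^N}(v(x)-w(x))\,d\mu.
\]
The standing hypothesis $v\leq w$ on $\M$ makes the integrand pointwise nonpositive on the set of integration, so $\langle\mu,v-w\rangle\leq 0$ for every $\mu\in\MM$. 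Therefore $v$ belongs to the admissible family on the right-hand side of the representation of $w$, and the maximum property forces $v(x)\leq w(x)$ for every $x\in\R^N$.

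There is essentially no obstacle here once Theorem \ref{thm6-4} is in hand: the proof is a one-line reduction, and the only thing worth stating explicitly is why the support inclusion $\spt\mu\subset\M\times\R^N$ is enough to turn an inequality on $\M$ into an integral inequality against every Mather measure. The \emph{real} substance was already done in establishing compactness of $\spt\mu$ (Proposition \ref{suppo}) and the representation formula (Theorem \ref{thm6-4}); the Corollary is just the translation of that machinery into a concrete comparison principle for weak KAM solutions, with $\M$ playing the role of boundary/uniqueness set.
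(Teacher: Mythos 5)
Your proposal is correct and follows essentially the same route as the paper: the paper states that Theorem \ref{thm6-4} (applied with $u_0=w$) together with the support identity $\int_{\R^{2N}}\psi(x)\,d\mu=\int_{\M\times\R^N}\psi(x)\,d\mu$ ``readily yields'' the Corollary, and your argument is precisely the spelled-out version of that one-line deduction. The only point worth noting is that you correctly read the constraint in Theorem \ref{thm6-4} as $\langle\mu,\tilde v-u_0\rangle\leq 0$ for the competitor $\tilde v$ (the paper's statement contains a small notational slip there), so there is no gap.
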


Remark by Proposition \ref{suppo} that $\M\subset \A$. The corollary above
claims that $\M$ is an uniqueness set of \eqref{EP}, that is, if $v$, $w$ are two weak KAM solutions of \eqref{EP}
and $v=w$ in $\M$, then $v=w$ in $\R^N$.
see \cite{HI2008},  and  \cite{Fathi_book}, \cite{MiTr20181} for related results.

In our proof, we consider the following variation of the discount problem
\begin{equation} \label{thm6-4-1}
\la v+H(x,Dv(x))=\la u_0(x) \ \ \text{ in }\R^N,
\end{equation}
where $\la$ is a given positive constant and $u_0$ is a weak KAM solution  as in Theorem \ref{thm6-4}.
Here it is obvious that $u_0$ is a solution of \eqref{thm6-4-1}.

\begin{Lemma}\label{lem6-6}    Let $u_0$ be a weak KAM solution. Then, $u_0$ is a maximal subsolution of \eqref{thm6-4-1}.
\end{Lemma}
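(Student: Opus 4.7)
My plan is to identify $u_0$ with the upper envelope $u^*$ of all viscosity subsolutions of \eqref{thm6-4-1}. First, I observe that $u_0$ itself is a solution of \eqref{thm6-4-1}: since $u_0$ solves the ergodic equation, $\lambda u_0 + H(x,Du_0) = \lambda u_0 + 0 = \lambda u_0$. In particular the class of subsolutions is nonempty and contains $u_0$. A Perron-type construction, adapted from Theorem \ref{thm1-4} by replacing the right-hand side $0$ with $\lambda u_0(x)$ (with local equiboundedness handled as in Proposition \ref{prop4-2}, using that $u_0$ is bounded on compact sets), produces a locally Lipschitz maximal subsolution $u^*$ which is itself a viscosity solution of \eqref{thm6-4-1} and satisfies $u^*\geq u_0$.

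Next I upgrade $u^*$ to a global subsolution of the ergodic equation \eqref{EP}. Since $u^*$ is a viscosity solution of $\lambda u^* + H(x,Du^*)=\lambda u_0$ and $u^*\geq u_0$, the viscosity inequality gives $H(x,Du^*)\leq \lambda(u_0-u^*)\leq 0$ everywhere in $\R^N$, so $u^*$ is a global subsolution of \eqref{EP}.

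The central step, which I expect to be the main difficulty, is to show that $u^*=u_0$ on $\A$. I would argue by contradiction: if $u^*(y_0)>u_0(y_0)$ at some $y_0\in\A$, then continuity produces $\eps>0$ and an open neighborhood $U$ of $y_0$ on which $\lambda(u_0-u^*)\leq -\eps$, which sharpens the viscosity inequality to $H(x,Du^*)\leq -\eps$ in $U$. Thus $u^*$ would be a global subsolution of \eqref{EP} that is \emph{strict} in a neighborhood of $y_0$, in contradiction with the characterization of the Aubry set built into the paper (Proposition \ref{lui!!}): no point of $\A$ can admit a neighborhood in which a global subsolution is strict. This is the step that relies essentially on the KAM/Aubry machinery rather than on routine viscosity manipulations.

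Once $u^*=u_0$ on $\A$ is established, the conclusion is immediate. By Lemma \ref{enric}, the weak KAM solution $u_0$ is the maximum over all subsolutions of \eqref{EP} that agree with $u_0$ on $\A$; since $u^*$ is one such subsolution, it follows that $u^*\leq u_0$ throughout $\R^N$. Combined with $u^*\geq u_0$, this yields $u^*=u_0$, so $u_0$ is the maximal subsolution of \eqref{thm6-4-1}, as claimed.
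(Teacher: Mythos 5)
Your proposal is correct and follows essentially the same route as the paper: a subsolution of \eqref{thm6-4-1} dominating $u_0$ is automatically a subsolution of \eqref{EP}, must agree with $u_0$ on $\A$ by the strict-subsolution characterization of the Aubry set (Proposition \ref{lui!!}), and is therefore $\leq u_0$ by the weak KAM property via Lemma \ref{enric}. The only cosmetic difference is that you package the argument around the Perron maximal subsolution $u^*$, whereas the paper argues by contradiction with an arbitrary subsolution $v$ replaced by $\max\{v,u_0\}$, which avoids the (harmless but unnecessary) Perron construction.
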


\begin{proof} Assume by contradiction that there is an usc subsolution  $v$  of
\eqref{thm6-4-1} with $v(x) > u_0(x)$ at some point $x$.  Since the maximum of two subsolutions is still a subsolution, we can assume
in addition that  $v \geq u_0$ in $\R^N$.  Therefore
\[H(x,Dv) \leq \la \, (u_0(x) - v(x)) \leq 0\]
so that $v$ is a subsolution to \eqref{EP} and is locally Lipschitz--continuous.  By Lemma \ref{lui!!}   we further derive that $v = u_0$ on $\A$. Since $u_0$ is a
weak  KAM solution, this implies that $u_0 \geq v$ in $\R^N$, which is contradictory.
\end{proof}

\smallskip

\begin{proof}[Proof of Theorem \ref{thm6-4}]   By  Proposition \ref{prescorcia}   there exists a subsolution $\bar u$ of \eqref{EP} coinciding with $u$ on $\A$ and constant
at infinity. By regularization we get for any $\eps >0$  a sequence $\bar u_\eps$  of $C^1$ functions satisfying
\begin{eqnarray*}
 |\bar u_\eps(x)- \bar u(x) | &< & \eps \qquad\hbox{for $x \in \R^N$.} \\
  |\bar u_\eps(x)-  u_0(x) | &< & \eps \qquad\hbox{for $x \in \A$.}
\end{eqnarray*}
Taking into account that $u_0$ is a weak KAM solution we derive from Lemma \ref{enric}
\[\bar u_\eps(x) \leq \bar u (x)+ \eps \leq u_0(x) + \eps  \qquad\hbox{for $x \in \R^N$.}\]
We consider the equations
\begin{eqnarray}
 \la \, u + H(x,Du) &=& \la  \, \bar u_\eps  \label{enri1}\\
  \la \, u + H(x, Du - D\bar u_\eps(x)) &=& 0 \label{enri2}
\end{eqnarray}
It is easy to check that $u$ is a subsolution to \eqref{enri1}  if and only if $u - \bar u_\eps$ is a subsolution of \eqref{enri2}. Since
$u_0 + \eps$ is the maximal solution of
\[ \la \, u + H(x,Du)= \la \, (u_0 + \eps)\]
by Lemma \ref{lem6-6} and  $u_0 + \eps \geq \bar u_\eps$ in $\R^N$,
we deduce that
\[ u_0 + \eps \geq u \qquad\hbox{for any subsolution to \eqref{enri1}.}\]
 We define the Lagrangian
\[L_\eps(x,q) = L(x,q) + D \bar u_\eps(x) \cdot q \]
corresponding to the Hamiltonian $ H(x, Du - D\bar u_\eps(x))$. A function $u$ is subsolution to $H[u]=a$, for any $a \in \R$, if and only if
$u - \bar u_\eps$ is subsolution to $H(x, Du + D \bar u_\eps(x)) = a$, this implies that $L$ and $L_\eps$ has both $0$ as critical value.
 In addition, Mather measures being closed,
we have that $\MM = \MM(L_\eps)$, where $\MM(L_\eps)$ indicates the Mather measures associated to $L_\eps$.  By applying Theorem \ref{card}
to $L_\eps$,  we see that the maximal subsolutions of \eqref{enri2} converge to
\begin{eqnarray*}
  &\max \{ u \; \hbox{subsln  for $L_\eps$ with $\langle \mu,  u \rangle  \leq 0 \, \forall \, \mu \in \MM$}\}& = \\
   &\max \{ u - \bar u_\eps \mid \; \hbox{$u$ subsln  for $L$ with $\langle \mu,  u \rangle \leq \langle \mu, \bar u_\eps \rangle \,
    \forall \, \mu \in \MM$}\}&
\end{eqnarray*}
We derive that
\begin{eqnarray*}
  u_0(x) + \eps  & \geq &  \max \{u(x) \; \hbox{ subsln  for $L$ with $\langle \mu,  u \rangle  \leq \langle \mu, \bar u_\eps \rangle  \, \forall \, \mu \in \MM$}\} \\
  &\geq&   \max \{u(x) \; \hbox{ subsln  for $L$ with $\langle \mu,  u \rangle\leq \langle \mu,   u_0\rangle   \, \forall \, \mu \in \MM$}\} - \eps\\
   &\geq& u_0(x) -  2 \, \eps
\end{eqnarray*}
We get   the assertion sending $\eps$ to $0$.

\end{proof}

\bigskip

\appendix

\section{Weak KAM  facts}\label{KAM}

 We define an intrinsic (semi)distance $S_0(\cdot,\cdot)$ in $\R^N$ related to the ergodic equation (see \cite{HI2008, Fathi_book} and also
\cite{FaSi2005})  via
\[S_0(x,y)=\sup\{u(y)-u(x)\mid u \;\hbox{subsolution of \eqref{EP}}\} \ \ \text{ for } x,y\in\R^N.\]
Since the family of subsolution to \eqref{EP} vanishing at some point  $y\in\R^N$,
is locally equi-Lipschitz continuous and, hence, locally uniformly bounded in $\R^N$,
the function $x\mapsto S_0(x,y)$ is well-defined as a locally Lipschitz continuous function
in $\R^N$.

Moreover, because of the stability of the viscosity properties under locally
uniform convergence, the function $x\mapsto S_0(x,y)$ is a subsolution of \eqref{EP} for any $y\in\R^N$.
It is clear that $S_0(x,x)=0$ for all $x\in\R^N$ and that
$S_0(x,y)\leq S_0(x,z)+S_0(z,y)$ for all $x,y,z\in\R^N$.
In view of the Perron method, for any $y\in\R^N$, the function $x\mapsto S_0(x,y)$ is a solution of \eqref{EP} in
$\R^N\setminus\{y\}$.

\smallskip

Due to the convexity of $H$ in $p$, it turns out that $S_0$ is the geodesic distance related to a length functional of the curves in $\R^N$.
We define
\[\si_0(x,q)= \max\{p \cdot q \mid H(x,p) \leq 0\}  \quad\hbox{for $(x,q) \in \R^N \times \R^N$}\]
and, given a (Lipschitz continuous) curve $\xi:[0,1] \to \R^N$, we set
\[\ell_0(\xi) = \int_0^1 \si_0(\xi,\dot\xi)\,dt.\]
Note that the above integral is invariant for orientation preserving change of parameter. We have
\begin{eqnarray*}
S_0(y,x) &=&   \inf\{\ell_0(\xi)\mid \xi:[0,1] \to \R^N \;\hbox{with $\xi(0)=y$, $\xi(1)=x$}\} \\
&=& \inf  \left \{ \int_0^T L(\zeta,\dot \zeta) \, dt \mid T >0,\, \zeta:[0,T] \to \R^N \;\hbox{with $\zeta(0)=y$, $\zeta(T)=x$} \right \}.
\end{eqnarray*}

\smallskip
We define the Aubry set $\A$ as
 \[\A=\{ y \in \R^N \mid S_0(y,\cdot) \; \hbox{ is a solution to $H=0$.}\}\]

 \begin{Proposition}\label{basta} An element $y \in \A$ if and only there is a sequence  $\xi_n$ of cycles based on $y$  with
   \[ \inf_n  \ell(\xi_n) > 0, \quad  \quad \lim_n \ell_0(\xi_n)=0.\]
 \end{Proposition}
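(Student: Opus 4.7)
The plan is to prove the two implications separately, exploiting the viscosity characterization: $y\in\A$ is the condition that $v(x):=S_0(y,x)$, which is already a solution of \eqref{EP} off $y$, is also a supersolution at $y$.

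For ``cycles $\Rightarrow y\in\A$'', I would argue by contradiction. Suppose such cycles $\xi_n$ exist but $y\notin\A$. Then $v$ fails to be a supersolution at $y$, so there is a $C^1$ subtangent $\phi$ with $\phi(y)=0=v(y)$, $\phi\leq v$ near $y$, and $H(y,D\phi(y))<0$. Continuity and convexity of $H$ upgrade this to $B(D\phi(x),\beta)\subset\{p:H(x,p)\leq 0\}$ on some ball $B=B(y,r_0)$, giving the refined inequality $D\phi(x)\cdot q\leq \sigma_0(x,q)-\beta|q|$ on $B\times \R^N$. After extending $\phi$ to a global critical subsolution $\tilde u$ that remains strict on a smaller ball $B'\Subset B$ (via a max-patching with any global subsolution, truncated as in Proposition~\ref{prescorcia}), I would integrate along each cycle:
\[
0=\int D\tilde u(\xi_n)\cdot\dot\xi_n\,dt\leq \ell_0(\xi_n)-\beta\int_{\xi_n^{-1}(B')}|\dot\xi_n|\,dt,
\]
so the $B'$-length vanishes in the limit. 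Since $\xi_n$ starts and ends at $y\in B'$ and has Euclidean length $\geq\delta$, the cycle either stays in $B'$, contradicting $\ell(\xi_n)\geq\delta$, or exits it, in which case the initial arc up to the first exit point contributes at least $\mathrm{dist}(y,\partial B')>0$ to the $B'$-length, again a contradiction.

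For ``$y\in\A\Rightarrow$ cycles'', I would use the dynamic programming identity for the critical solution $v=S_0(y,\cdot)$: for any $T>0$,
\[
0=v(y)=\inf\Big\{v(\xi(0))+\int_0^T L(\xi,\dot\xi)\,ds:\xi(T)=y\Big\},
\]
and the a priori bound $\int_0^T L\geq S_0(\xi(0),y)$ forces both $v(\xi_n(0))=S_0(y,\xi_n(0))$ and $S_0(\xi_n(0),y)$ to $0$ along near-minimizers $\xi_n$. Concatenating each $\xi_n$ with a near-geodesic from $y$ to $\xi_n(0)$ produces closed cycles at $y$ whose intrinsic length tends to $0$.

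The main obstacle is to ensure $\inf_n \ell(\xi_n)>0$, i.e., to rule out the scenario where the endpoints $\xi_n(0)$ collapse to $y$. The idea is to run the first direction's construction in reverse: if, for every $\delta>0$ and every $T>0$, all near-minimizing endpoints satisfied $|\xi_n(0)-y|<\delta$, one could produce a $C^1$ subtangent $\phi$ of $v$ at $y$ with $H(y,D\phi(y))<0$ by a calibration/linearization argument, contradicting $y\in\A$. Hence there must exist $\delta>0$ and a choice of $T$ for which one can select near-minimizers with $|\xi_n(0)-y|\geq\delta$, yielding the desired cycles of Euclidean length bounded below by $2\delta$. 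This last quantitative step is the genuinely delicate point; it is essentially the content of the equivalence, in Fathi–Siconolfi-type theory, between the PDE definition of $\A$ used here and the Mañé-potential/cycle definition.
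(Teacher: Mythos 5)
The paper itself does not give a self-contained proof here: it simply refers to \cite[Prop.~5.4--Lemma~5.5]{FaSi2005}. Measured against that standard argument, your first implication (cycles $\Rightarrow y\in\A$) is essentially correct: if $y\notin\A$, Proposition~\ref{lui!!} supplies a global critical subsolution that is strict on a ball $B'$ around $y$, the strictness upgrades to $p\cdot q\le\sigma_0(x,q)-\beta|q|$ for $p\in\partial \tilde u(x)$, $x\in B'$, and integrating over a cycle gives $0\le \ell_0(\xi_n)-\beta\,\ell(\xi_n\cap B')$; your case analysis (the cycle stays in $B'$ or must cross an annulus of fixed width) then produces the contradiction. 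This is the right argument, modulo the routine mollification needed to justify the integration along a Lipschitz curve.

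The converse implication, however, contains a genuine gap, and it is exactly at the step you yourself flag. Your Lax--Oleinik construction does produce closed curves at $y$ with $\ell_0\to 0$, but the lower bound $\inf_n\ell(\xi_n)>0$ is the entire content of the hard direction, and your justification for it --- that if all near-minimizers collapsed to $y$ one ``could produce a $C^1$ subtangent $\phi$ of $v$ at $y$ with $H(y,D\phi(y))<0$ by a calibration/linearization argument'' --- is asserted, not proved; indeed you then describe it as ``essentially the content of the equivalence'', which makes the argument circular as written. The way this step is actually closed in the Fathi--Siconolfi framework is by contraposition and does not pass through near-minimizers at all: one assumes that for some $\delta>0$ the cycles at $y$ with $\ell(\xi)\ge\delta$ have $\ell_0(\xi)$ bounded below by a positive constant, perturbs the length functional to $\sigma_0(x,q)-\beta|q|$ on a small ball around $y$, verifies (using the assumed lower bound, together with $\ell_0\ge 0$ on all cycles at the critical level) that the perturbed functional admits no negative cycles through $y$, and reads off from the associated potential a critical subsolution strict near $y$; Proposition~\ref{lui!!} then gives $y\notin\A$. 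Some version of this quantitative construction must be carried out; without it the ``only if'' direction is not established.
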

 \begin{proof} One can argue as in  \cite[Lemma Proposition 5.4 -- Lemma 5.5]{FaSi2005}.

 \end{proof}

 \smallskip

 \begin{Proposition}\label{lui!!} Given $x \in \R^N$, if there is a subsolution of \eqref{EP} which is strict in some neighborhood of  $x$
  then $x \not\in \A$, conversely if $x \not\in \A$  there exists  a subsolution of \eqref{EP} which is strict in some neighborhood of $x$.
\end{Proposition}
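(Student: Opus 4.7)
The plan is to prove the two implications separately, using the cycle characterization of $\A$ in Proposition \ref{basta} for the forward direction, and for the converse a construction that exploits the failure of the supersolution property of $S_0(x,\cdot)$ at $x$.

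\emph{Forward direction.} Suppose $v$ is a subsolution of \eqref{EP} with $H(\cdot, Dv(\cdot)) \leq -\eps_0$ a.e.\ in a neighborhood $U$ of $x$. Coercivity of $H$ bounds $|Dv|$ by some $M$ a.e.\ in a slightly smaller neighborhood $U' \subset\subset U$, and joint continuity of $H$ on the compact set $\overline{U'} \times \overline{B_{M+1}}$ yields $\delta>0$ such that, a.e.\ on $U'$,
\[
Dv(y) + B_\delta \subset \{p : H(y,p) \leq 0\},
\]
so, taking support functions in the direction $q$,
\[
Dv(y)\cdot q + \delta|q| \leq \sigma_0(y,q) \qquad \hbox{a.e. $y \in U'$, all $q \in \R^N$.}
\]
Integrating along any Lipschitz curve $\xi$ contained in $U'$ gives $v(\xi(1))-v(\xi(0)) \leq \ell_0(\xi) - \delta\,\ell(\xi)$. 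If $x \in \A$, a local refinement of Proposition \ref{basta} (standard: the approximating cycles may be reparametrized and truncated to lie in any prescribed neighborhood of $x$) provides cycles $\xi_n$ based at $x$ inside $U'$, with $\inf_n \ell(\xi_n) > 0$ and $\ell_0(\xi_n) \to 0$. The inequality above applied to such $\xi_n$ yields $0 \leq \ell_0(\xi_n) - \delta\,\ell(\xi_n)$, which becomes negative for $n$ large, a contradiction.

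\emph{Converse direction.} Assume $x \not\in \A$. The function $y \mapsto S_0(x,y)$ is a subsolution of \eqref{EP} on $\R^N$ and, by the Perron construction, a solution on $\R^N \setminus \{x\}$; the failure to be a solution is therefore localized at $x$ and amounts to the failure of the supersolution property there. Hence there exist a test function $\phi \in C^1(\R^N)$ touching $S_0(x,\cdot)$ from below at $x$, and $\eps>0$ with $H(x, p^*) < -2\eps$, where $p^* := D\phi(x)$. By continuity of $H$, we have $H(y, p^*) \leq -\eps$ on some ball $V = B(x,\rho)$, so the affine function $w(y) := p^*\cdot(y-x)$ is a strict subsolution of \eqref{EP} in $V$. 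To globalize, fix any global subsolution $u_0$ (existing by Proposition \ref{amarilho}) and set
\[
\psi(y) := p^*\cdot(y-x) + a - \eta\,|y-x|^2,
\]
with $a,\eta>0$ to be tuned. Since $D\psi(y) = p^* - 2\eta(y-x)$, choosing $\eta$ small keeps the strict subsolution property on a smaller ball $V' \subset V$; choosing $a$ larger than $u_0(x)$ and calibrating $V'$ so that $\psi < u_0$ on $\partial V'$, the patched function $\tilde v$ defined as $\max\{u_0,\psi\}$ inside $V'$ and $u_0$ outside is a subsolution of \eqref{EP} on $\R^N$ that coincides with $\psi$ in a neighborhood of $x$, hence is strict there.

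The hardest step will be the final gluing in the converse: making $\psi$ remain a strict subsolution on $V'$ favors small $\eta$, while forcing $\psi \leq u_0$ on $\partial V'$ favors large $\eta$, and these constraints must be reconciled through a joint calibration of $\rho$ and $\eta$ (equivalently, one may replace the quadratic term by a smooth radial cutoff of the affine piece $w$ supported in $V$, which automatically caps $\psi$ below $u_0$ outside and preserves strictness near $x$ by continuity of $H$). The local refinement of Proposition \ref{basta} used in the forward direction, while standard, also deserves explicit verification.
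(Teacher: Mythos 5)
Your overall strategy is classical and close in spirit to the Fathi--Siconolfi treatment, but both halves as written contain a genuine gap. In the forward direction the step you yourself flag is the problem: Proposition \ref{basta} gives cycles $\xi_n$ based at $x$ with $\ell_0(\xi_n)\to0$ and $\inf_n\ell(\xi_n)>0$, but these cycles \emph{cannot} in general be localized to a prescribed neighborhood $U'$ --- truncating a closed curve does not produce a closed curve, and the defining cycles may be forced to make long excursions (take $H(x,p)=|p-g(x)|^2-|g(x)|^2$ with $g$ a nonvanishing field tangent to a circle $C$: every point of $C$ is in $\A$ via cycles traversing all of $C$, while any cycle confined to a small ball around $x\in C$ has $\ell_0\approx|g(x)|\,\ell$, bounded away from $0$). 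Fortunately you do not need localization: since $v$ is a \emph{global} subsolution you have $Dv\cdot q\le\sigma_0(\cdot,q)$ a.e.\ everywhere and your improved inequality $Dv\cdot q+\delta|q|\le\sigma_0(\cdot,q)$ on $U'$; integrating around the whole cycle gives $0\le\ell_0(\xi_n)-\delta\,\ell(\xi_n|_{U'})$, and the Euclidean length of the portion of $\xi_n$ inside $U'$ is bounded below by $\min\{\inf_n\ell(\xi_n),\ \mathrm{dist}(x,\partial U')\}>0$ because each cycle is based at the interior point $x$. With that replacement the first half closes. (The paper avoids cycles entirely: it notes that $u(x)+S_0(x,\cdot)$ is supertangent to $u$ at $x$, so the supersolution property of $S_0(x,\cdot)$ at $x\in\A$ yields some $p_0\in\partial u(x)$ with $H(x,p_0)\ge0$, against strictness.)

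The converse is where the real gap lies, and your own calibration analysis already exhibits it. Gluing onto an \emph{arbitrary} global subsolution $u_0$ cannot be made to work: forcing $\psi<u_0$ on $\partial V'$ while keeping $\psi(x)=a>u_0(x)$ requires $\eta\rho'^2\gtrsim(|p^*|+\mathrm{Lip}(u_0))\rho'$, i.e.\ $2\eta\rho'$ bounded below by a fixed positive constant --- but $2\eta\rho'$ is exactly the size of the perturbation of $D\psi$ away from $p^*$ on $V'$, so strictness is destroyed; the cutoff variant fails for the same reason (in the transition annulus $\chi w+a\ge a-|p^*|\rho$ need not lie below $u_0$, and there $D(\chi w+a)$ is far from $p^*$). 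The missing idea is that the outer function must be $S_0(x,\cdot)$ itself: your test function $\phi$ is a \emph{subtangent to $S_0(x,\cdot)$ at $x$}, and after subtracting $|y-x|^2$ (which leaves $D\phi(x)$ unchanged) the subtangency is strict, so $S_0(x,\cdot)-\phi\ge\delta>0$ on $\partial V$. For $0<a<\delta$ the function equal to $\max\{\phi,\,S_0(x,\cdot)-a\}$ in $V$ and to $S_0(x,\cdot)-a$ outside is then continuous, is a subsolution, and equals $\phi$ near $x$ (since $\phi(x)=0>-a$), hence is a strict subsolution there. This is exactly the role the strict subtangent to $S_0(x,\cdot)$ plays in the paper's one-line gluing; without tying the construction to $S_0(x,\cdot)$ the argument does not close.
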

\begin{proof} If  such a subsolution $u$ does exist for $x \in \A$, we find  by maximality properties  of $S_0(\cdot,x)$, that the
 function $u(x)+ S_0(x,\cdot)$  is supertangent to $u$ at $x$.
 Being  $S_0(\cdot,x)$ solution,  there is   $p_0 \in \partial u(x)$ (the generalized gradient of $u$ at $x$), with $H(x,p_0) \geq 0$,
   on the other side, being $u$ strict subsolution, any $p \in \partial u(x)$ satisfies $H(x,p) < 0$, which is contradictory.

Conversely, if $x \not\in \A$, then $S_0(x,\cdot)$ is not a solution to \eqref{EP} and there exists consequently a strict subtangent to
 $S_0(x,\cdot)$ at $x$ with $H(x,D\psi(x)) <0$, then the function
 \[ \min \{ S_0(x,\cdot) ,\psi + a\} \]
 is a subsolution of \eqref{EP} locally strict around $x$, for a suitable choice of $a > 0$.
\end{proof}

 \smallskip

The function $P_0$ in $\R^{2N}$ given by
\[P_0(x,y)=\min_{z\in \A}[S_0( x,z)+S_0(z,y)] \ \ \text{ for } x,y\in\R^N
\]
is called the Peierls barrier.  See \cite[Proposition 3.7.2]{CoIt1999} and \cite{CCIZ2018,  Fathi_book,DFIZ2016, AAIY2016}.

\begin{Lemma}\label{lem5-2}  For any $z\in\R^N$,
$P_0(z,z)=0$\ if and only if \  $z\in \A$.
\end{Lemma}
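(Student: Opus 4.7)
The easy direction is essentially a triangle inequality calculation. If $z \in \mathcal A$, then taking $w = z$ in the minimum defining $P_0(z,z)$ gives $P_0(z,z) \leq S_0(z,z) + S_0(z,z) = 0$. Conversely, from $S_0(z,z) = 0$ and the subadditivity $S_0(z,w) + S_0(w,z) \geq S_0(z,z) = 0$ valid for every $w$, we infer $P_0(z,z) \geq 0$. Hence $P_0(z,z) = 0$ whenever $z \in \mathcal A$.

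For the converse, assume $P_0(z,z) = 0$. By definition, the minimum is attained at some $w \in \mathcal A$, so $S_0(z,w) + S_0(w,z) = 0$. The case $w = z$ is trivial. Otherwise, the plan is to apply Proposition \ref{basta} by producing a sequence of cycles $\xi_n$ based at $z$ with $\ell_0(\xi_n) \to 0$ and $\inf_n \ell(\xi_n) > 0$. Since $w \in \mathcal A$, Proposition \ref{basta} already provides cycles $\eta_n$ based at $w$ with $\ell_0(\eta_n) \to 0$ and $\ell(\eta_n) \geq c_0 > 0$ uniformly. Using the geodesic characterization of $S_0$, pick curves $\alpha_n$ from $z$ to $w$ and $\beta_n$ from $w$ to $z$, parametrized on bounded intervals, so that
\[
\ell_0(\alpha_n) \leq S_0(z,w) + \tfrac1n, \qquad \ell_0(\beta_n) \leq S_0(w,z) + \tfrac1n.
\]
Concatenating $\xi_n := \alpha_n \cdot \eta_n \cdot \beta_n$ produces a cycle based at $z$, whose intrinsic length satisfies
\[
0 \leq \ell_0(\xi_n) \leq S_0(z,w) + S_0(w,z) + \tfrac3n = \tfrac3n \xrightarrow[n]{} 0,
\]
since the intrinsic length is always bounded below by $S_0(z,z) = 0$. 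Meanwhile the Euclidean length obeys $\ell(\xi_n) \geq \ell(\eta_n) \geq c_0 > 0$. Proposition \ref{basta} then yields $z \in \mathcal A$.

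The main conceptual step is the concatenation argument, and the only delicate point is verifying that the cycles $\eta_n$ sitting in the middle really contribute a definite Euclidean length while still forcing the intrinsic length of the whole cycle to vanish. This is exactly the content of Proposition \ref{basta} applied at $w$, and the equality $S_0(z,w) + S_0(w,z) = 0$ forces the side excursions $\alpha_n$ and $\beta_n$ to cancel in intrinsic length up to arbitrary precision. No further compactness or regularity input is needed beyond what is already available from the geodesic interpretation of $S_0$ recalled in Appendix \ref{KAM}.
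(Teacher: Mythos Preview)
Your proof is correct, but it takes a genuinely different route from the paper's. For the converse direction, the paper argues at the level of viscosity solutions: since $x\mapsto P_0(z,x)$ is a solution of \eqref{EP} (as an infimum of the solutions $S_0(w,\cdot)$ over $w\in\A$), and since $S_0(z,\cdot)\leq P_0(z,\cdot)$ with equality at $z$ by the hypothesis $P_0(z,z)=0=S_0(z,z)$, any $C^1$ subtangent to $S_0(z,\cdot)$ at $z$ is also subtangent to $P_0(z,\cdot)$ at $z$, whence $H(z,D\psi(z))\geq 0$. This shows directly that $S_0(z,\cdot)$ is a supersolution at $z$, i.e.\ $z\in\A$.

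Your argument instead works on the metric side, invoking the cycle characterization of Proposition~\ref{basta}: you pick $w\in\A$ realizing the minimum, take near-optimal curves $\alpha_n,\beta_n$ between $z$ and $w$, insert a nontrivial cycle $\eta_n$ at $w$ supplied by Proposition~\ref{basta}, and check that the concatenation has vanishing intrinsic length but Euclidean length bounded below. This is slightly longer but more constructive, and it avoids any PDE input beyond the geodesic formula for $S_0$; the paper's approach is shorter and exploits the viscosity structure already established for $P_0$. Both are sound. A minor cosmetic point: your bound $\ell_0(\xi_n)\leq 3/n$ implicitly assumes $\ell_0(\eta_n)\leq 1/n$, which requires a relabeling; the conclusion $\ell_0(\xi_n)\to 0$ is unaffected.
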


\begin{proof}  First of all, we examine some properties of the function $P_0$.
Since
\[
0=S_0(x,x)\leq S_0(x,y)+S_0(y,x) \ \ \text{ for all }x,y\in\R^N,
\]
we find that $P_0(x,x)\geq 0$ for all $x\in\R^N$.
Note next that if $z\in \A$, then
the function $x\mapsto S_0(z,x)$ is a weak KAM solution of \eqref{EP}. Hence, the function
$x\mapsto P_0(y,x)$ is a weak KAM solution of \eqref{EP} as well, for any
$y\in\R^N$.  We note by the triangle inequality for $S_0$ that for any $x,y\in\R^N$,
\[
S_0(x,y)\leq \min_{z\in\A}[S_0(x,z)+S_0(z,y)]=P_0(x,y).
\]

Now, we assume that $z\in\A$. We have
\[0\leq P_0(z,z)\leq S_0(z,z)+S_0(z,z)=0.\]
Hence, $P_0(z,z)=0$.

Next, assume that $S_0(z,z)=0$. We need to show that
the function $x\mapsto S_0(z,x)$ is a solution of \eqref{EP}. In fact,
since the function $x\mapsto S_0(z,x)$ is a solution of \eqref{EP} in $\R^N\setminus\{z\}$,
we only need to show that $H(z,D\psi(z))\geq 0$ for all  $C^1$ subtangent $\psi$  to $S_0(z, \cdot)$  at $z$. Indeed, such a function
is also subtangent to $P_0(z, \cdot)$ at $z$, and the sought inequality comes from $P_0(z, \cdot)$ being solution to \eqref{EP}.
 This completes the proof.
\end{proof}

\bigskip

\section{Strict topology}\label{strict}

 We denote by $C_0(\R^{2N})$, $C_c(\R^{2N})$ the space of compactly supported  and vanishing at infinity  continuous functions, respectively. We
 endow the space of continuous bounded functions in $\R^{2N}$, denoted by $C_b(\R^{2N})$,
 with the strict topology. It is is the locally convex Hausdorff topology
 defined by the family of seminorms
\[\{ \|\cdot\|_\Psi \mid \Psi \in C_0(\R^{2N})\}, \]
where
\[ \|\Phi\|_\Psi = \| \Phi \, \Psi\|_\infty \qquad\hbox{for any $\Psi \in C_b(\R^{2N})$.}\]
We recall that the compact open topology is instead given by the seminorms
\[\{ \|\cdot\|_\Psi \mid \Psi \in C_c(\R^{2N})\}. \]
It induces the local uniform convergence and a base of neighborhoods at any given $\Phi_0 \in C_b(\R^{2N})$ is given by
\[ \{\Phi \mid \| \Phi - \Phi_0\|_{\infty,K} < \eps \} \qquad \hbox{with $K$ compact subset of $\R^N$, $\eps >0$.}\]
The strict topology is stronger than the compact--open  topology since it
has a larger class of defining seminorms. Any open set for the compact--open topology is consequently an open set for the strict one. Further,
 the strict  topology is weaker than the topology induced by $\|\cdot\|_\infty$. Also recall
 that the completion of $C_c(\R^{2N})$ with respect to the norm topology is $C_0(\R^{2N})$, while it is $C_b(\R^{2N})$ in the strict topology.

The interest of introducing the strict topology is that we get in this frame a nice generalization  of Riesz representation theorem,
namely the topological dual of $C_b(\R^{2N})$  is the space of  signed
Radon measures with bounded variation, the
 normalized  positive elements are then Radon probability measures, see \cite{Bu1}.  The corresponding
 weak star  topology on the dual, namely the weakest topology for which
 \[ \mu \mapsto \int \Phi \, d\mu \]
 is continuous for any $\Phi \in C_b(\R^{2N})$ is called  the narrow topology.
 Accordingly a sequence of measures $\mu_n$  narrow converges
 to some $\mu$ if
 \[\int \Phi \, d\mu_n \to \int \Phi \, d\mu  \qquad\hbox{for any  $\Phi \in C_b(\R^{2N})$.}\]
 The matter is slippery because  the bounded signed Radon measures make up the topological dual of $C_0(\R^{2N})$ with the norm topology as well,
 but the  induced weak star topology, the so--called vague topology, is strictly weaker than the narrow topology. Regarding the dual of $C_b(R^{2N})$ with the norm topology,
 it is given by the bounded signed measures on the Stone--Cech compactification of $\R^{2N}$.

\section{Separation theorem}

 Let $X$ be a general locally convex Hausdorff space, we indicate by $X^*$ its topological dual and by
  $(\cdot, \cdot)$ the pairing between $X^*$ and $X$. Given a closed convex subset $E$ and $x \in \partial E$,
we denote by $N_E(x)$ the normal cone to $E$ at $X$, defined as
\[N_E(x) = \{p \in X^* \mid (p, y - x) \leq 0 \;\hbox{for any $y \in E$}\}.\]
Note that in contrast to what happens for finite dimensional spaces, in the infinite dimensional case $N_E(x)$
can reduce to $\{0\}$, see for instance \cite{ClB}. However we have

\begin{Proposition}\label{sepa} Let  $E$ be a closed convex subset of $X$ with nonempty interior,
 then $N_E(x)$ contains nonzero elements for any $x \in \partial E$.
\end{Proposition}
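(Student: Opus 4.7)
The plan is to derive Proposition \ref{sepa} directly from the geometric form of the Hahn--Banach theorem in locally convex Hausdorff spaces. The key observation is that $x\in\partial E$ means in particular $x\notin\Int E$, and that $\Int E$ is a nonempty open convex set. These are exactly the hypotheses required to separate a point from a convex open set by a nonzero continuous linear functional.

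First I would invoke the first geometric form of Hahn--Banach: there exist a nonzero $p\in X^*$ and $\alpha\in\R$ such that
\[
(p,y)<\alpha\leq (p,x)\qquad\hbox{for every }y\in\Int E.
\]
This is where the nonempty interior hypothesis is crucial, since the standard separation result for a point and a convex set in an infinite dimensional locally convex space requires one of the sets to have nonempty interior (otherwise $N_E(x)$ may indeed reduce to $\{0\}$, as mentioned in the statement).

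Next I would upgrade the inequality from $\Int E$ to all of $E$. The step that does the work is the classical fact that for a convex set with nonempty interior, any point of the set is in the closure of the interior, i.e.\ $E\subseteq\overline{\Int E}$ (this follows from the elementary observation that if $y_0\in\Int E$ and $y\in E$, then the segment $(1-t)y+ty_0$ lies in $\Int E$ for $t\in(0,1]$, by convexity and openness, and converges to $y$ as $t\to 0$). Combined with continuity of $p$, this propagates $(p,y)\leq \alpha\leq(p,x)$ to all $y\in E$, i.e.\ $(p,y-x)\leq 0$ for all $y\in E$, which is precisely $p\in N_E(x)$. Since $p\neq 0$, this exhibits the desired nonzero element of the normal cone.

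The only potentially delicate point is ensuring that the separation theorem I invoke is the right version for a general locally convex Hausdorff space (as opposed to a Banach space); but this is standard, since the geometric Hahn--Banach theorem holds in any topological vector space whenever one of the convex sets involved is open and the other is nonempty and disjoint from it. No further structure on $X$ is needed, so the argument applies verbatim to the settings (space of bounded generalized Lagrangians with the strict topology, etc.) in which the proposition is later used.
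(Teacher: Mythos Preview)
Your proposal is correct and follows essentially the same route as the paper: apply the Hahn--Banach/hyperplane separation theorem to separate the point $x\in\partial E$ from the nonempty open convex set $\Int E$, then pass to the closure to obtain the normal-cone inequality on all of $E$. The paper's proof is terser (it simply cites the separation theorem and the convexity of $\Int E$), but your added detail on the closure step via segments is sound and fills in exactly what the paper leaves implicit.
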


This is actually a simple consequence of the Hyperplane Separation theorem in locally convex Hausdorff spaces, see  \cite{RSB}, which can be stated as follows:

\begin{Theorem}  Ler $E$ be a  convex subset of $X$ with nonempty interior  and $y \not \in E$. There exists  $0 \neq p \in X^*$ with
\[ ( p, y)  \geq   ( p, x) \qquad\hbox{for any $x \in E$.}\]
\end{Theorem}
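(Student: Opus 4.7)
The plan is to follow the classical route via the Minkowski gauge and the algebraic Hahn--Banach theorem, then recover continuity from the existence of interior points. By translating $E$ and $y$ by $-x_0$ for some $x_0 \in \Int \, E$, I may assume without loss of generality that $0 \in \Int \, E$; this reduction is harmless since the conclusion is invariant under translation (replace $p$ by its action and add back the constant $(p,x_0)$).

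Next I would introduce the Minkowski functional
\[
 \mu_E(x) \;=\; \inf\{\, t > 0 \mid x/t \in E \,\} \qquad\hbox{for $x \in X$,}
\]
and record its three key properties. First, $\mu_E$ is everywhere finite and positively homogeneous: finiteness follows because $0 \in \Int\, E$, so there is an absorbing neighborhood $V \subset E$ of $0$, and every $x \in X$ lies in $tV$ for $t$ large enough. Second, $\mu_E$ is subadditive, which is a direct consequence of convexity of $E$ applied to the convex combination realizing $(x+y)/(s+t)$ from $x/s$ and $y/t$. Third, since $V \subset E$ for some open neighborhood $V$ of $0$, we have $\mu_E \leq \mu_V$, and $\mu_V$ is continuous at $0$ by the very definition of absorbing neighborhood; this gives continuity of $\mu_E$ at $0$, hence upper semicontinuity everywhere by sublinearity.

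With these facts, I would check that $\mu_E(y) \geq 1$: indeed, if $\mu_E(y) < 1$, then $y/t \in E$ for some $t < 1$, and since $0 \in \Int \, E$, the segment from $0$ to $y/t$ contains $y$ if $t \leq 1$, up to using the openness version; more cleanly, one shows directly that $\{x \mid \mu_E(x) < 1\} \subset E \subset \{x \mid \mu_E(x) \leq 1\}$, and $y \notin E$ forces $\mu_E(y) \geq 1$. Now define a linear functional $f$ on the one-dimensional subspace $\R y$ by $f(ty) = t \, \mu_E(y)$; then $f \leq \mu_E$ on $\R y$ (trivially for $t \geq 0$, and for $t < 0$ because $\mu_E \geq 0$ while $f(ty) \leq 0$). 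Apply the algebraic Hahn--Banach theorem to extend $f$ to a linear functional $p$ on all of $X$ with $p \leq \mu_E$ globally.

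Finally I would verify that $p$ is the desired element of $X^*$. Continuity: since $p \leq \mu_E$ and $-p(x) = p(-x) \leq \mu_E(-x)$, the functional $p$ is bounded above on a neighborhood of $0$ (namely any $V$ with $V,\, -V \subset E$), and a linear functional bounded above on a neighborhood of the origin is continuous on a locally convex space. Separation: for $x \in E$ one has $p(x) \leq \mu_E(x) \leq 1 \leq \mu_E(y) = p(y)$. Nonzero: $p(y) \geq 1 > 0$. The main subtlety, rather than an obstacle, is the continuity step, since in a general locally convex Hausdorff space one must invoke the existence of an absolutely convex neighborhood base of $0$ to trap $p$ symmetrically; this is where the locally convex hypothesis is genuinely used, as opposed to the algebraic extension which needs only vector space structure.
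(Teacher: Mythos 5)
Your proof is correct, but a point of comparison first: the paper does not prove this theorem at all. It is stated in the appendix as a known result with a citation to Reed--Simon \cite{RSB}, and its only role there is to yield Proposition \ref{sepa} (nonzero elements of the normal cone at boundary points of a convex set with nonempty interior). Your gauge-functional argument --- translate so that $0 \in \Int\, E$, establish the sandwich $\{\mu_E < 1\} \subset E \subset \{\mu_E \leq 1\}$ (the first inclusion using $0 \in E$ and convexity) so that $y \notin E$ forces $\mu_E(y) \geq 1$, extend $ty \mapsto t\,\mu_E(y)$ from $\R y$ by the algebraic Hahn--Banach theorem dominated by the sublinear functional $\mu_E$, and then get continuity of $p$ from $p \leq \mu_E \leq 1$ on a neighborhood $V \subset E$ of the origin, hence $|p| \leq 1$ on $V \cap (-V)$ --- is precisely the standard proof behind the citation, and every step checks out; note also that continuity of $\mu_E$ itself, which you establish, is never actually invoked: only its finiteness, sublinearity, and the bound $\mu_E \leq 1$ on $E$ enter the argument. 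One correction to your closing remark: local convexity is not genuinely used in this proof, nor is it needed for this statement, which holds in an arbitrary topological vector space. A neighborhood of $0$ is absorbing regardless of any convex base, and the symmetric trap $V \cap (-V)$ requires no absolutely convex neighborhoods; the algebraic extension is, as you say, purely linear-algebraic. The locally convex Hausdorff hypothesis earns its keep elsewhere in the paper --- for instance in guaranteeing that $X^*$ separates points and in the identification of the dual of $C_b(\R^{2N})$ under the strict topology --- so your argument is in fact slightly more general than the theorem as stated.
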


\smallskip

To get  Proposition \ref{sepa} it is enough to use the property that  the interior of any convex set is convex, and to apply
the Hyperplane Separation theorem to the interior of $E$ and to  any point in $\partial E$.

\bigskip

\begin{bibdiv}
\begin{biblist}
\bib{BCD1997}{book}{
   author={Bardi, Martino},
   author={Capuzzo-Dolcetta, Italo},
   title={Optimal control and viscosity solutions of Hamilton-Jacobi-Bellman
   equations},
   series={Systems \& Control: Foundations \& Applications},
   note={With appendices by Maurizio Falcone and Pierpaolo Soravia},
   publisher={Birkh\"auser Boston, Inc., Boston, MA},
   date={1997},
   pages={xviii+570},
   %isbn={0-8176-3640-4},
   %review={\MR{1484411 (99e:49001)}},
   %doi={10.1007/978-0-8176-4755-1},
}
\bib{Ba1994}{book}{
   author={Barles, Guy},
   title={Solutions de viscosit\'e des \'equations de Hamilton-Jacobi},
   language={French, with French summary},
   series={Math\'ematiques \& Applications (Berlin) [Mathematics \&
   Applications]},
   volume={17},
   publisher={Springer-Verlag, Paris},
   date={1994},
   pages={x+194},
   %isbn={3-540-58422-6},
   %review={\MR{1613876 (2000b:49054)}},
   }

   \bib{Bu1}{article}{
   author={Buck, R. Creighton},
   title={Operator algebras and dual spaces},
   journal={Proc. Amer. Math. Soc.},
   volume={3},
   date={1952},
   pages={681--687},
   %issn={0002-9939},
   %review={\MR{50180}},
   %doi={10.2307/2032166},
}

\bib{ClB}{book}{
   author={Clarke, Francis},
   title={Functional analysis, calculus of variations and optimal control},
   series={Graduate Texts in Mathematics},
   volume={264},
   publisher={Springer, London},
   date={2013},
   pages={xiv+591},
  % isbn={978-1-4471-4819-7},
  % isbn={978-1-4471-4820-3},
  % review={\MR{3026831}},
   %doi={10.1007/978-1-4471-4820-3},
}

\bib{CIL1992}{article}{
   author={Crandall, Michael G.},
   author={Ishii, Hitoshi},
   author={Lions, Pierre-Louis},
   title={User's guide to viscosity solutions of second order partial
   differential equations},
   journal={Bull. Amer. Math. Soc. (N.S.)},
   volume={27},
   date={1992},
   number={1},
   pages={1--67},
   %issn={0273-0979},
   %review={\MR{1118699 (92j:35050)}},
   %doi={10.1090/S0273-0979-1992-00266-5},
}

\bib{CL1983}{article}{
   author={Crandall, Michael G.},
   author={Lions, Pierre-Louis},
   title={Viscosity solutions of Hamilton-Jacobi equations},
   journal={Trans. Amer. Math. Soc.},
   volume={277},
   date={1983},
   number={1},
   pages={1--42},
   %issn={0002-9947},
   %review={\MR{690039 (85g:35029)}},
   %doi={10.2307/1999343},
}

\bib{DFIZ2016}{article}{
   author={Davini, Andrea},
   author={Fathi, Albert},
   author={Iturriaga, Renato},
   author={Zavidovique, Maxime},
   title={Convergence of the solutions of the discounted Hamilton-Jacobi
   equation: convergence of the discounted solutions},
   journal={Invent. Math.},
   volume={206},
   date={2016},
   number={1},
   pages={29--55},
  % issn={0020-9910},
   %review={\MR{3556524}},
  % doi={10.1007/s00222-016-0648-6},
}

\bib{E1}{article}{
   author={Evans, Lawrence C.},
   title={Weak KAM theory and partial differential equations},
   conference={
      title={Calculus of variations and nonlinear partial differential
      equations},
   },
   book={
      series={Lecture Notes in Math.},
      volume={1927},
      publisher={Springer, Berlin},
   },
   date={2008},
   pages={123--154},
   %review={\MR{2408260}},
   %doi={10.1007/978-3-540-75914-0_4},
}
		
\bib{E2}{article}{
   author={Evans, Lawrence C.},
   title={A survey of partial differential equations methods in weak KAM
   theory},
   journal={Comm. Pure Appl. Math.},
   volume={57},
   date={2004},
   number={4},
   pages={445--480},
   issn={0010-3640},
   review={\MR{2026176}},
   doi={10.1002/cpa.20009},
}

\bib{FS2006}{book}{
   author={Fleming, Wendell H.},
   author={Soner, H. Mete},
   title={Controlled Markov processes and viscosity solutions},
   series={Stochastic Modelling and Applied Probability},
   volume={25},
   edition={2},
   publisher={Springer, New York},
   date={2006},
   pages={xviii+429},
   %isbn={978-0387-260457},
   %isbn={0-387-26045-5},
  % review={\MR{2179357 (2006e:93002)}},

}
	
\bib{G2005}{article}{
   author={Gomes, Diogo Aguiar},
   title={Duality principles for fully nonlinear elliptic equations},
   conference={
      title={Trends in partial differential equations of mathematical
      physics},
   },
   book={
      series={Progr. Nonlinear Differential Equations Appl.},
      volume={61},
      publisher={Birkh\"{a}user, Basel},
   },
   date={2005},
   pages={125--136},
  % review={\MR{2129614}},
  % doi={10.1007/3-7643-7317-2_10},
}

\bib{HI2008}{article}{
   author={Ishii, Hitoshi},
   title={Asymptotic solutions for large time of Hamilton-Jacobi equations
   in Euclidean $n$ space},
   language={English, with English and French summaries},
   journal={Ann. Inst. H. Poincar\'e Anal. Non Lin\'eaire},
   volume={25},
   date={2008},
   number={2},
   pages={231--266},
  % issn={0294-1449},
   %review={\MR{2396521}},
   %doi={10.1016/j.anihpc.2006.09.002},
}
\bib{IMT2017}{article}{
   author={Ishii, Hitoshi},
   author={Mitake, Hiroyoshi},
   author={Tran, Hung V.},
   title={The vanishing discount problem and viscosity {M}ather
              measures. {P}art 1: {T}he problem on a torus},
   journal={J. Math. Pures Appl. (9)},
   volume={108},
   date={2017},
   number={2},
   pages={125--149},
  % issn={0294-1449},
   %review={\MR{2396521}},
   %doi={10.1016/j.anihpc.2006.09.002},
 }

\bib{MiTr20181}{article}{
   author={Mitake, Hiroyoshi},
   author={Tran, Hung V.},
   title={On uniqueness set of additive eigenvalue problems
   and applications},
   journal={arXiv:1801.05081v1 [math.AP]},
   volume={},
   date={},
   number={},
   pages={1--10},
   %issn={0294-1449},
   %review={\MR{2396521}},
   %doi={10.1016/j.anihpc.2006.09.002},
}

\bib{Fathi_book}{article}{
   author={Fathi, Albert},
   title={Weak KAM Theorem in
Lagrangian Dynamics
Preliminary Version
Number 10},
   journal={},
   volume={},
   date={2008},
   number={},
   pages={},
   %issn={0294-1449},
   %review={\MR{2396521}},
   %doi={10.1016/j.anihpc.2006.09.002},
}

\bib{FaSi2005}{article}{
   author={Fathi, Albert},
   author={Siconolfi, Antonio},
   title={PDE aspects of Aubry-Mather theory for quasiconvex Hamiltonians},
   journal={Calc. Var. Partial Differential Equations},
   volume={22},
   date={2005},
   number={2},
   pages={185--228},
   issn={0944-2669},
   review={\MR{2106767}},
   doi={10.1007/s00526-004-0271-z},
}

\bib{CoIt1999}{book}{
   author={Contreras, Gonzalo},
   author={Iturriaga, Renato},
   title={Global minimizers of autonomous Lagrangians},
   series={22$^{\rm o}$ Col\'oquio Brasileiro de Matem\'atica. [22nd
   Brazilian Mathematics Colloquium]},
   publisher={Instituto de Matem\'atica Pura e Aplicada (IMPA), Rio de
   Janeiro},
   date={1999},
   pages={148},
   isbn={85-244-0151-6},
   review={\MR{1720372}},
  }

\bib{RSB}{book}{
   author={Reed, Michael},
   author={Simon, Barry},
   title={Methods of modern mathematical physics. I},
   edition={2},
   note={Functional analysis},
   publisher={Academic Press, Inc. [Harcourt Brace Jovanovich, Publishers],
   New York},
   date={1980},
   pages={xv+400},
  % isbn={0-12-585050-6},
   %review={\MR{751959}},

}

\bib{AAIY2016}{article}{
   author={Al-Aidarous, Eman S.},
   author={Alzahrani, Ebraheem O.},
   author={Ishii, Hitoshi},
   author={Younas, Arshad M. M.},
   title={A convergence result for the ergodic problem for Hamilton-Jacobi
   equations with Neumann-type boundary conditions},
   journal={Proc. Roy. Soc. Edinburgh Sect. A},
   volume={146},
   date={2016},
   number={2},
   pages={225--242},
   issn={0308-2105},
   review={\MR{3475295}},
   doi={10.1017/S0308210515000517},
}

\bib{CCIZ2018}{article}{
  author={Chen, Qinbo},
  author={Cheng, Wei},
  author={Ishii, Hitoshi},
  author={Zhao, Kai},
  title={Vanishing contact structure problem and convergence of the viscosity solutions},
  journal={arXiv:1808.06046 [math.AP]},}

\end{biblist}
\end{bibdiv}

\end{document}